\providecommand{\@LN}[2]{}
\newtheorem{thm}{Theorem}[section]
\newtheorem{lem}[thm]{Lemma}
\newtheorem{prop}[thm]{Proposition}
\newtheorem{coro}[thm]{Corollary}
\theoremstyle{remark}
\newtheorem{rema}[thm]{Remark}
\newtheorem{exa}[thm]{Example}
\newtheorem{defi}[thm]{Definition}
\DeclareMathOperator\Ad{Ad}
\title{Integrability of quotients in Poisson and Dirac geometry}
\author{Daniel \'Alvarez}\address{Instituto de Matem\'atica Pura e Aplicada, Estrada Dona Castorina, 110, Jardim Bot\^anico, CEP \texttt{22460320}, Rio de Janeiro, Brasil}
\email{uerbum@impa.br}
\date{} 
\begin{document}


\begin{abstract} We study the integrability of Poisson and Dirac structures that arise from quotient constructions. From our results we deduce several classical results as well as new applications. We also give explicit constructions of Lie groupoids integrating two interesting families of geometric structures: (i) a special class of Poisson homogeneous spaces of symplectic groupoids integrating Poisson groups and (ii) Dirac homogeneous spaces.  
\end{abstract}\maketitle
\tableofcontents 

\section{Introduction} 

Symplectic groupoids have become a key tool in the study of Poisson structures, being of central importance for the problem of quantization \cite{catfel,groqua} as well as other applications \cite{cradifcoh,conlin,craruimar}. An important issue is that, unlike finite-dimensional Lie algebras, which always admit integrations to Lie groups, not every Poisson manifold is ``integrable'' to a symplectic groupoid \cite{weisygr}. The precise conditions for the integrability of Poisson manifolds (and Lie algebroids in general) were found in \cite{crarui,craruipoi}. Although these results solve the abstract problem of integrability, it is still of fundamental importance to identify concrete classes of Poisson manifolds that are integrable and have explicit constructions for their integrating groupoids. Our work has been largely driven by this problem with special focus on Poisson structures (or more general Lie algebroids) that arise from quotient constructions. In this paper we extend some of the integrability results found in \cite{ferigl,intpoihom} and we put them in a common framework.

The study of Poisson structures resulting from reduction by  symmetries is intimately tied with more general geometric objects, known as \emph{Dirac structures} \cite{coubey,coudir}. In simple terms, Dirac structures are to Poisson structures what closed 2-forms are to symplectic forms, and just as symplectic forms naturally arise as quotients of closed 2-forms, Poisson structures are often realized as quotients of Dirac structures. Extending Poisson to Dirac structures is also essential to make sense of their pullbacks \cite{burint}. It often happens that in order to understand the integrability of a Poisson structure obtained by reduction of a Dirac structure \cite{burint} it is easier to check whether the original Dirac structure is integrable and whether one of its integrations gives rise to an integration of the associated Poisson quotient, as e.g. in \cite{intpoihom}. The main goal of this work is to understand how and when the following diagram closes:
\[ \xymatrix{ \{\text{Presymplectic groupoids}\}\ar[d]_{\text{Lie functor}} \ar@{.>}[rrr]^-{\text{Reduction?} }&&&\{\text{Symplectic groupoids}\}\ar[d]^{\text{Lie functor}} \\
\{\text{Reducible Dirac structures}\} \ar[rrr]_-{\text{Reduction}}&&&\{\text{Poisson structures}\}.} \]
It is not hard to find examples of integrable Dirac structures whose quotient Poisson manifold is nonintegrable, see Example \ref{exahopfib}.
The central result in this paper characterizes the integrability of Poisson structures obtained as quotients of Dirac structures (see Theorem \ref{pullquo} below): 

\begin{thm}\label{main} Let $q:S\rightarrow M$ be a surjective submersion and let $\pi$ be a Poisson structure on $M$. Then the following are equivalent:
\begin{enumerate} \item $(M,\pi)$ is integrable;
		\item the pullback Dirac structure $L:=q^!(T^*M)$ is integrable and the inclusion $\ker Tq\subset L$ is integrable by a Lie groupoid morphism $S \times_M S \rightarrow {G}$, where $S \times_M S \rightrightarrows S$ is the submersion groupoid associated to $q$ and $G$ is a presymplectic groupoid. 
\end{enumerate} Moreover, if $\Phi:S \times_M S \rightarrow G$ is a Lie groupoid morphism as above and the fibers of $q$ are connected, then $(M,\pi)$ is integrable by the quotient $G/\sim$, where $\sim$ is the equivalence relation on $G$ defined by 
\[ g \sim \Phi(x)g\Phi(y) \]
for all compatible $g\in G$ and $x,y\in S \times_M S$.
\end{thm}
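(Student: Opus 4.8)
The plan is to exhibit $G/\!\sim$ as a symplectic groupoid over $M$ integrating $(M,\pi)$, produced by reducing the presymplectic groupoid $(G,\omega)$ along the two-sided action of $H:=S\times_M S$ defined through $\Phi$. Write $\sigma\colon L\to T^*S$ for the IM $2$-form of $\omega$, the bundle map assigning to $(X,\xi)\in L\subset TS\oplus T^*S$ its covector part $\xi$. A preliminary lemma I would record is that $\Phi^*\omega=0$. Indeed, $\Phi$ integrates the inclusion $\ker Tq\hookrightarrow L$, so the closed multiplicative form $\Phi^*\omega$ on $H$ has IM $2$-form $\sigma|_{\ker Tq}$; but $\ker Tq=L\cap TS$ is exactly the set of elements of $L$ whose covector part vanishes, so $\sigma|_{\ker Tq}=0$, and since the fibers of $q$ are connected $H$ is source-connected, whence a multiplicative form with vanishing IM $2$-form must itself vanish. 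The same computation gives the identity $\ker\sigma=L\cap TS=\ker Tq$, which will be used repeatedly.

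Next I would show that $\sim$ is the orbit relation of a free action and that $G/\!\sim$ is a smooth Lie groupoid over $M$. The relation is generated by the left and right translations $g\mapsto\Phi(x)g$ and $g\mapsto g\Phi(y)$ by arrows in the image of $\Phi$; because $\Phi$ covers $\mathrm{id}_S$ and the $H$-orbits in $S$ are the connected fibers of $q$, the induced equivalence on the unit manifold is exactly the fiber relation of $q$, so the quotient base is $M$. Freeness of the action follows from $\ker\sigma=\ker Tq$ (the isotropy is discrete, hence trivial by connectedness of the fibers), and I would verify the Godement criterion for the graph $R\subset G\times G$ of $\sim$, checking that it is a closed embedded submanifold with the two projections submersions. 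Source and target then descend to $\bar s[g]=q(s(g))$, $\bar t[g]=q(t(g))$, and multiplication descends after moving representatives into composable position: given classes with $q(s(g))=q(t(h))$, one replaces $h$ by the equivalent arrow $\Phi\big(s(g),t(h)\big)h$ and sets $[g][h]:=[\,g\,\Phi(s(g),t(h))\,h\,]$, the relation guaranteeing independence of the choices. This yields a Lie groupoid $G/\!\sim\rightrightarrows M$.

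The heart of the proof is that $\omega$ descends to a symplectic form. Multiplicativity of $\omega$ together with $\Phi^*\omega=0$ shows that the two-sided action preserves $\omega$, so it suffices to prove that the kernel of $\omega$ coincides with the tangent distribution $\mathcal D$ to the $\sim$-orbits. For the inclusion $\mathcal D\subseteq\ker\omega$ I would use that the infinitesimal generators of the action are the left- and right-invariant vector fields attached to sections of $\ker Tq=\ker\sigma$, and that the standard relations expressing $\iota_{\overrightarrow a}\,\omega$ and $\iota_{\overleftarrow a}\,\omega$ through $\sigma(a)$ force these vector fields into $\ker\omega$ precisely because $\sigma$ annihilates $\ker Tq$. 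For the reverse inclusion I would invoke the structural description of the characteristic (null) foliation of a presymplectic groupoid: its leaves are the orbits of the null directions, which here are the left- and right-translates of $\ker\sigma=\ker Tq$, i.e.\ exactly $\mathcal D$; as a consistency check, $\dim G=2\dim S$ and $\dim\mathcal D=2(\dim S-\dim M)$, so $\mathrm{rank}\,\bar\omega=2\dim M=\dim(G/\!\sim)$. Hence $\omega$ passes to a closed, nondegenerate, multiplicative form $\bar\omega$ on $G/\!\sim$, making $(G/\!\sim,\bar\omega)$ a symplectic groupoid over $M$.

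Finally I would identify the base Poisson structure with $\pi$. The source $s\colon(G,\mathrm{graph}\,\omega)\to(S,L)$ is a forward Dirac map, and $L=q^!(T^*M)$ means precisely that $q\colon(S,L)\to(M,\pi)$ is a forward Dirac (hence Poisson-compatible) map \cite{burint}; since $\bar s\circ\mathrm{pr}=q\circ s$ for the projection $\mathrm{pr}\colon G\to G/\!\sim$, the descended source $\bar s\colon(G/\!\sim,\bar\omega)\to(M,\pi)$ is a forward Dirac map, which is exactly the statement that the Poisson structure induced by $\bar\omega$ on the base is $\pi$. Thus $G/\!\sim$ integrates $(M,\pi)$. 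I expect the two global regularity points to be the main obstacles: that the $\sim$-orbits form a simple foliation so that $G/\!\sim$ is a smooth Hausdorff manifold, and that $\ker\omega$ globally equals $\mathcal D$ rather than merely containing it; both rest on the connectedness of the fibers of $q$ and on the identity $\ker\sigma=\ker Tq$.
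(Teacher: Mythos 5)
Your reduction scheme for the ``moreover'' part is essentially the one the paper uses (Theorem \ref{pullquo}): establish $\Phi^*\omega=0$ from source-connectedness of $S\times_M S$ (the paper quotes this from the literature, you supply the IM-form argument), show the $\sim$-orbits are tangent to $\ker\omega$ and exhaust it because $\ker\omega$ is spanned by the vectors $u^r-v^l$ with $u,v\in\ker Tq$, and then push $\omega$ down. However, there are two genuine gaps. First, the step you delegate to ``the Godement criterion'' is exactly where the paper has to work hardest: you assert that the graph $R$ of $\sim$ is a closed embedded submanifold of $G\times G$, but in this paper's conventions a presymplectic groupoid $G$ need not be Hausdorff (only its base and source-fibers are), and then the natural parametrization $T(x,g,y)=(\Phi(x)g\Phi(y),g)$ of $R$, an injective proper immersion, need not be an embedding, nor need $R$ be closed. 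The paper's Lemma \ref{liequo3} exists precisely to circumvent this: it builds an atlas on $G/R$ by hand out of submersion charts for $q$, and obtains Hausdorffness only for the source-fibers of the quotient (which is all the definition of Lie groupoid demands). As written, your argument proves the theorem only under the extra hypothesis that $G$ is Hausdorff.

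Second, you do not prove the stated equivalence. The implication (1)$\Rightarrow$(2) is entirely absent --- in the paper it is the pullback groupoid $q^!\mathcal{G}$ of a symplectic groupoid $\mathcal{G}\rightrightarrows M$, with $\Phi(x,y)=(x,\mathtt{u}(q(x)),y)$ and the pulled-back form --- and your proof of (2)$\Rightarrow$(1) relies on $\Phi^*\omega=0$, hence on connected $q$-fibers, whereas the equivalence in the statement carries no connectedness hypothesis (only the ``moreover'' does). The paper avoids this by deducing (2)$\Rightarrow$(1) from the purely Lie-algebroid statement (Theorem \ref{liequo4}): the quotient $G/R$ is a Lie groupoid with $\text{Lie} (G/R)\cong T^*M$, which already makes $(M,\pi)$ integrable with no descent of the $2$-form at all; the symplectic form on $G/R$ is additional information available when the fibers are connected. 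Two minor points: freeness of the two-sided action is immediate from the fact that $\Phi$ covers $\mathrm{id}_S$ (the target of $\Phi(x)g\Phi(y)$ determines $x$ and its source determines $y$), so no appeal to $\ker\sigma=\ker Tq$ or to discreteness of isotropy is needed; and in the paper's conventions it is the \emph{target} map of a presymplectic groupoid that is forward Dirac onto $(S,L)$, the source map being so only up to a sign, so your final identification of the base Poisson structure should be run with $\overline{\mathtt{t}}$ rather than $\overline{\mathtt{s}}$.
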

In Section \ref{sec:promai} we prove Theorem \ref{main} along with its generalization to Lie algebroids and their pullbacks, see Theorem \ref{liequo4}. 

Theorem \ref{main} turns out to have several interesting consequences and applications. For instance, Theorem \ref{main} leads to an alternative proof of classical results concerning the integrability of quotients of Poisson manifolds and the Morita invariance of integrability, see Proposition \ref{coired} and Proposition \ref{morinvint}. As new applications, we obtain the integration of certain types of homogeneous spaces in Poisson and Dirac geometry (Section \ref{sec:homspa}).

It is proven in \cite{luphd} that Poisson groups are always integrable. More recently, Poisson homogeneous spaces of Poisson Lie groups \cite{dripoi} were also shown to be integrable \cite{intpoihom}. Our tools allow us to extend these results and consider other types of homogeneous spaces; these applications are conducted in Section \ref{sec:homspa}. In general, {\em Poisson homogeneous spaces} are Poisson manifolds endowed with a transitive Poisson action of a Poisson groupoid. We identify a natural class of integrable Poisson homogeneous spaces of {\em symplectic groupoids} which are in duality with respect to the classical Poisson homogeneous spaces of Poisson groups, see Theorem \ref{poihomspasymgro}.

In another direction, Poisson groups have been generalized to Dirac geometry. A Dirac-Lie group, as in \cite{liedir}, is a group-like object in the category of Manin pairs. Dirac-Lie groups subsume the study of Poisson groups and of the important Cartan-Dirac structures, which are central in the theory of q-Poisson manifolds \cite{purspi}. In this context we obtain Theorem \ref{dirhomint}, which generalizes the results of \cite{intpoihom} to {\em Dirac homogeneous spaces} \cite{robdir,dirhomspa}.

\section{Preliminaries} \subsection{Lie groupoids and Lie algebroids}
We follow mainly the conventions of \cite{moeint,dufzun}. A {\em smooth groupoid} is a groupoid object in the category of not necessarily Hausdorff smooth manifolds such that its source map is a submersion. The structure maps of a groupoid are its source, target, multiplication, unit map and inversion, denoted respectively $\mathtt{s},\mathtt{t},\mathtt{m},\mathtt{u} , \mathtt{i}$. For the sake of simplifying the notation we also denote $\mathtt{m}(a,b)$ by $ab$. By a {\em Lie groupoid} we mean a smooth groupoid such that its base and source-fibers are Hausdorff manifolds. In order to avoid ambiguity when dealing with several groupoids we use a subindex $\mathtt{s}=\mathtt{s}_G$, $\mathtt{t}_G$, $\mathtt{m}_G$ to specify the groupoid under consideration. 

A {\em Lie algebroid} $A$ over a manifold $M$ is a vector bundle $A\rightarrow M$ and a bundle map $\mathtt{a}:A\rightarrow TM$ called the {\em anchor} that satisfy the following properties: $\Gamma(A)$ has a Lie algebra structure $[\,,\,]$ and the Leibniz rule holds:
\[ [u,fv]=f[u,v]+\left(\mathcal{L}_{\mathtt{a}(u)}f\right)v, \] 
for all $u,v\in \Gamma(A)$ and $f\in C^\infty(M)$. See \cite{higmacalg,vai} for the definition of Lie algebroid morphism.

Let $G\rightrightarrows B$ be a Lie groupoid. Its {\em tangent Lie algebroid} $A=A_{G}$ is the vector bundle $A=\ker T\mathtt{s}|_{B}$ with the anchor given by the restriction of $T\mathtt{t}$ and the bracket defined in terms of right invariant vector fields. The construction of the tangent Lie algebroid is functorial: a Lie groupoid morphism induces canonically a Lie algebroid morphism between the associated Lie algebroids. The functor thus induced is called the {\em Lie functor} and we denote it by $\text{Lie} $. When a Lie algebroid is isomorphic to the tangent Lie algebroid of a Lie groupoid it is called {\em integrable}. Not every Lie algebroid is integrable and the general obstructions for integrability were found in \cite{crarui}. If $A$ is an integrable Lie algebroid, we denote by $\mathcal{G} (A)$ its source-simply-connected integration (which is unique up to isomorphism). 

An important result relating Lie groupoids and Lie algebroids, that will be frequently used in this paper, is the following. Let $\phi:A \rightarrow B$ be a Lie algebroid morphism and suppose that $A$ and $B$ are integrable. Then, for every Lie groupoid $K$ integrating $B$, there is a unique Lie groupoid morphism $\Phi:\mathcal{G} (A) \rightarrow K$ such that $\text{Lie} (\Phi)=\phi$. This result is known as {\em Lie's second theorem} \cite{intinfact,moeint}.

A case of interest in this paper is the following: let $q:S \rightarrow M$ be a surjective submersion. Then the fiber product $S \times_M S$ is a Lie groupoid over $S$, where the source and target maps are the projections to $S$ and the multiplication is defined by $\mathtt{m}((x,y),(y,z))=(x,z)$. The Lie groupoid thus obtained is called the {\em submersion groupoid} associated to $q$, its Lie algebroid is isomorphic to the distribution $\ker Tq \hookrightarrow TS$. 

If $\mathfrak{g} $ is a Lie algebra acting on a manifold $M$, we denote by $u_M$ the vector field on $M$ induced by the action of $u\in \mathfrak{g} $ and by $\mathfrak{g}_M\subset TM$ the distribution generated by all such vector fields. An infinitesimal action induces a Lie algebroid structure on the trivial bundle $\mathfrak{g} \times M$ \cite{moeint} called the {\em action Lie algebroid structure} \cite{macgen}. 

Let $A$ be a Lie algebroid over $M$ and let $q:S\rightarrow M$ be a map such that $Tq$ is transverse to the anchor of $A$. The {\em pullback Lie algebroid} of $A$ along $q$, denoted $q^! A$ \cite{higmacalg}, is the vector bundle $q^!A:=\{X\oplus U\in TN \oplus q^*A:Tq(X)=\mathtt{a}(U)\}$ endowed with the projection to $TS$ as the anchor and with the Lie bracket given by $[X\oplus fU,Y\oplus gV]=[X,Y]\oplus \mathcal{L}_Xg V-\mathcal{L}_YfU+fg[U,V] $, where $U,V$ are pullbacks of sections of $A$ and $f,g\in C^\infty(S)$. If $q$ is a surjective submersion, we have that the natural inclusion $\ker Tq \hookrightarrow q^!A$ is a Lie algebroid morphism.

\subsection{Poisson structures and symplectic groupoids}\label{subsec:poi} A {\em Poisson structure} on a manifold $M$ is a bivector field $\pi\in \Gamma(\wedge^2TM)$ such that $[\pi,\pi]=0$, where $[\,,\,]$ is the Schouten bracket. A Poisson structure $\pi$ turns $(M,\pi)$ into a \emph{Poisson manifold}. Equivalently, a Poisson structure on $M$ is determined by a Lie algebra structure on $C^\infty(M)$ such that the Leibniz rule holds: $\{fg,h\}=f\{g,h\}+\{f,h\}g$ for all $f,g,h\in C^\infty(M)$. In terms of a bivector field, the bracket $\{\,,\,\}$ is described as $\{f,g\}=\pi(df,dg)$ for all functions $f,g$ on $M$. The cotangent bundle of a Poisson manifold is endowed with a Lie algebroid structure in the following way \cite{dufzun}. Let $\pi^\sharp:T^*M \rightarrow TM$ be the map defined by $\alpha \mapsto i_\alpha \pi$. The Lie bracket on $\Omega^1(M)$ is given by:
\begin{align*}  [\alpha,\beta]=\mathcal{L}_{\pi^\sharp(\alpha)}\beta-\mathcal{L}_{\pi^\sharp(\beta)}\alpha-d\pi(\alpha,\beta),  \end{align*}
for all $\alpha,\beta \in \Omega^1(M)$; the anchor of $T^*M$ is the map $\pi^\sharp$. In this situation, we shall refer to $T^*M$ as the {\em cotangent Lie algebroid} of $M$. A Poisson morphism $J:(P,\pi_P) \rightarrow (Q,\pi_Q)$ between Poisson manifolds is a smooth map that satisfies $J^*\{f,g\}=\{J^*f,J^*g\}$ for all $f,g\in C^\infty (Q)$.

A Lie groupoid $G \rightrightarrows B$ is a \emph{symplectic groupoid} \cite{weisygr,karpoi} if it is endowed with a symplectic form $\omega $ such that the graph of the multiplication in $G\times G\times {G}$ is Lagrangian with respect to the symplectic form $\text{pr}_1^* \omega +\text{pr}_2^* \omega -\text{pr}_3^* \omega $. The base of a symplectic groupoid inherits a unique Poisson structure such that the target map is a Poisson morphism and the cotangent Lie algebroid of this Poisson structure is canonically isomorphic to the Lie algebroid of the symplectic groupoid \cite{macxu2}. If the cotangent Lie algebroid of a Poisson manifold $M$ is integrable, we shall say that $M$ is {\em integrable}. If $M$ is an integrable Poisson manifold, we denote by $\Sigma(M) \rightrightarrows M$ its source-simply-connected integration which automatically inherits a symplectic groupoid structure \cite{macxu2}.  
\subsection{Dirac structures and presymplectic groupoids}\label{subsec:dirpre} Let $M$ be a manifold. We denote by $\mathbb{T}M$ the direct sum $TM \oplus T^*M$. The {\em Courant-Dorfman bracket} \cite{coubey,coudir} on $\Gamma(\mathbb{T}M )$ is:
\[ \llbracket X+\alpha,Y+\beta \rrbracket:=[X,Y]+\mathcal{L}_X\beta-i_Yd\alpha, \]
for all $X,Y\in \Gamma(TM)$, $\alpha,\beta\in \Gamma(T^*M)$. Define the bilinear pairing   
			\[ \langle X+\alpha,Y+\beta\rangle=\langle\alpha,Y\rangle+\langle \beta,X \rangle; \]
a subbundle $F\subset \mathbb{T} M $ is called {\em isotropic} if $\langle\,,\, \rangle|_F=0$. A maximally isotropic subbundle of $\mathbb{T} M $ is called {\em Lagrangian}. A {\em Dirac structure} is a Lagrangian subbundle $L\subset \mathbb{T} M $ which is involutive with respect to the restricted Courant-Dorfman bracket. If $L \subset \mathbb{T}M $ is a Dirac structure, $(M,L)$ is called a {\em Dirac manifold}.

Take a Dirac structure $L$ on $M$. A function $f$ on $M$ is {\em admissible} if there exists a vector field $X_f$ such that $X_f+df\in \Gamma(L)$. Admissible functions form a Poisson algebra with the bracket $\{f,g\}=\mathcal{L}_{X_f}g$. If the distribution $D=L \cap(TM\oplus 0)$ is of constant rank, its induced foliation is called {\em the null foliation of $L$}. The admissible functions are the functions constant along the leaves of $D$, so they are identified with the smooth functions on the leaf space of $D$ when this is smooth. A Dirac structure is called {\em reducible} if its null foliation is simple.  

Take a smooth map $f:(M,L_M) \rightarrow (N,L_N)$ between Dirac manifolds. We say that $f$ is a \emph{forward Dirac map} \cite{burint,purspi} if
			\begin{align*} L_N=\mathfrak{F}_f(L_M):=\{Tf(X)\oplus  \alpha :X\oplus f^*\alpha \in L_M\}; \end{align*}
				We say that $f$ is a \emph{backward Dirac map} if				\begin{align*} L_M=\mathfrak{B}_f(L_N):=\{X\oplus  f^*\alpha :Tf(X)\oplus \alpha \in L_N\}. \end{align*}
 
Generalizing the correspondence between Poisson manifolds and symplectic groupoids, presymplectic groupoids integrate Dirac structures \cite{burint}. A 2-form $\omega $ on a Lie groupoid $G \rightrightarrows M$ is multiplicative if $ \text{pr}_1^* \omega +\text{pr}_2^* \omega -\text{pr}_3^* \omega $ vanishes on the graph of the multiplication map in $G \times G\times G$. A {\em presymplectic groupoid} is a Lie groupoid $G \rightrightarrows M$ endowed with a multiplicative 2-form $\omega $ such that $d \omega =0$, $\dim G=2\dim M$ and $\ker \omega \cap \ker T \mathtt{s} \cap \ker T \mathtt{t} =0$. If $(G, \omega ) \rightrightarrows M$ is a presymplectic groupoid, then there is a unique Dirac structure $L$ on $M$ such that $\mathtt{t}:(G, \text{graph}(\omega )) \rightarrow (M,L)$ is a forward Dirac map and $L$ is canonically isomorphic to the Lie algebroid of $G$ \cite{burint}. 

We can see that a bivector field $\pi$ on $M$ is Poisson if and only if $\text{graph}(\pi)\hookrightarrow \mathbb{T}M$ is a Dirac structure; in such a situation, if $q:S \rightarrow M$ is a submersion then $q^!(T^*M)$ is identified with a Dirac structure on $S$ such that $q$ is a backward Dirac map.

\section{Proof of the main result}\label{sec:promai} \subsection{The case of Lie algebroids} We begin this section by establishing a more general version of Theorem \ref{main} which corresponds to the relationship between the integrability of a Lie algebroid and the integrability of its pullback Lie algebroid along a surjective submersion, see Theorem \ref{liequo4}. Later we specialize the argument for the case of Poisson and Dirac structures thus obtaining a slightly more refined integrability criterion in Theorem \ref{pullquo} below. Theorem \ref{liequo4} answers the following question: {\em given a surjective submersion $q:S \rightarrow M$ and a Lie algebroid $A$ over $M$, how is the integrability of $A$ related to the integrability of $q^!A$?} It is not enough for $q^!A$ to be integrable in order to conclude that so is $A$, see Example \ref{exahopfib}.
 
\begin{defi} Let $q:S\rightarrow M$ be a surjective submersion and let $A$ be a Lie algebroid on $M$. We say that $q^!A$ admits a {\em $q$-admissible integration} if $q^! A$ is integrable and the inclusion $\ker Tq \hookrightarrow  q^!A$ is integrable by a Lie groupoid morphism $S \times_M S \rightarrow {G}$, where $S \times_M S \rightrightarrows S $ is the submersion groupoid. \end{defi}
The condition above generalizes the situation of principal bundles considered in \cite[Definition 2.6]{intpoihom}. Let us stress that the distribution $\ker Tq$ is always integrable by its monodromy groupoid $\mathcal{G} (\ker Tq)$ and Lie's second theorem \cite{moeint} implies that, if $q^!A$ is integrable, then the inclusion $\ker Tq \hookrightarrow  q^!A$ is integrable by a Lie groupoid morphism $\mathcal{G} (\ker Tq) \rightarrow \mathcal{G} (q^!A)$. In general, such a Lie groupoid morphism does not induce a morphism with source the submersion groupoid $S \times_M S \rightrightarrows S$, see Proposition \ref{exapullquo}. Let us define the following equivalence relation $R$ on $G$: 
\begin{align}  g \sim \Phi(x)g\Phi(y) \label{equrel} \end{align} 
for all compatible $g\in G$ and $x,y\in S \times_M S$.
\begin{thm}\label{liequo4} Let $q:S\rightarrow M$ be a surjective submersion and let $A$ be a Lie algebroid on $M$. Then the following are equivalent:
\begin{enumerate} \item $A$ is integrable.
		\item the pullback Lie algebroid $q^!A $ admits a $q$-admissible integration $G$. 
\end{enumerate} Moreover, if $\Phi:S \times_M S \rightarrow G$ is a Lie groupoid morphism such that $\text{Lie} (\Phi)$ is the inclusion $\ker Tq \hookrightarrow q^!A$, then the quotient $G/R$ inherits a unique Lie groupoid structure over $M$ such that the quotient map $G \rightarrow G/R$ is a Lie groupoid morphism. In this situation, $\text{Lie} (G/R)\cong A$. \end{thm}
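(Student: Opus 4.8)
The plan is to prove the two implications separately, deriving the final \emph{Moreover} assertion as a byproduct of the direction (2)$\Rightarrow$(1).

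For (1)$\Rightarrow$(2) I would produce an explicit $q$-admissible integration out of an integration of $A$. Given any Lie groupoid $\mathcal{G}\rightrightarrows M$ with $\text{Lie}(\mathcal{G})\cong A$, form the pullback groupoid $G:=S\times_M\mathcal{G}\times_M S$ over $S$; since $q$ is a surjective submersion, $Tq$ is transverse to the anchor of $A$ and $G$ is a Lie groupoid integrating $q^!A$ (cf.\ \cite{higmacalg}). The map $\Phi\colon S\times_M S\to G$, $(s,s')\mapsto(s,1_{q(s)},s')$, is a Lie groupoid morphism over $\mathrm{id}_S$, and differentiating it along the units shows that $\text{Lie}(\Phi)$ is precisely the inclusion $\ker Tq\hookrightarrow q^!A$ (the unit directions of $\mathcal{G}$ account for the summand $U=0$). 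Thus $G$ is a $q$-admissible integration.

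For the converse together with the quotient construction, suppose $G$ is a $q$-admissible integration with morphism $\Phi$. The relation $R$ of \eqref{equrel} is the orbit relation of the two commuting actions of the submersion groupoid $S\times_M S$ on $G$ by left and right translation through $\Phi$, with moment maps $\mathtt{t}_G$ and $\mathtt{s}_G$. The crucial---and I expect hardest---step is to show that $G/R$ is a smooth manifold, for which I would build charts from local sections of $q$. Over an open cover $\{U_i\}$ of $M$ choose sections $\sigma_i\colon U_i\to S$ of $q$ and set $G_{ij}:=\{g\in G:\mathtt{t}_G(g)\in\sigma_i(U_i),\ \mathtt{s}_G(g)\in\sigma_j(U_j)\}$. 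First I would check that each $G_{ij}$ is an embedded submanifold: $\mathtt{s}_G^{-1}(\sigma_j(U_j))$ is one because $\mathtt{s}_G$ is a submersion, and on it $\mathtt{t}_G$ is transverse to $\sigma_i(U_i)$ because $T\mathtt{t}_G$ sends the source-fiber directions $\ker T\mathtt{s}_G$ onto $\operatorname{Im}\mathtt{a}_{q^!A}$, which contains $\ker Tq$, while $\ker Tq$ is complementary to $T\sigma_i(U_i)$; a dimension count then gives $\dim G_{ij}=\dim M+\operatorname{rank}A$, the expected dimension of $G/R$. Next, every $R$-class meeting the relevant region has a representative in $G_{ij}$ (translate its source and target onto the sections by suitable values of $\Phi$), and this representative is \emph{unique}, since any $\Phi(x)$ whose two endpoints lie on a single section $\sigma_i$ is forced to be a unit. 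These slices then serve as an atlas with smooth (section-change) transition maps, and the smooth map that left- and right-translates each arrow so that its target and source land on the chosen sections realizes $G\to G/R$ locally as a surjective submersion.

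With smoothness established I would descend the groupoid structure. The objects are $M=S/{\sim}$; the maps $q\circ\mathtt{s}_G$ and $q\circ\mathtt{t}_G$ are $R$-invariant and descend to the source and target $G/R\to M$, while multiplication descends because $R$ is a congruence: choosing representatives with matching source and target via $\Phi(\mathtt{s}_G(g),\mathtt{t}_G(h))$, invariance follows from $\Phi$ being a homomorphism, so that interior factors $\Phi(z)\Phi(z)^{-1}$ cancel. Smoothness of the descended maps is automatic since $G\to G/R$ is a submersion, the remaining Lie groupoid axioms are routine, and uniqueness of the structure is forced by requiring the quotient map to be a morphism. Finally, to identify $\text{Lie}(G/R)$ I apply the Lie functor to $p\colon G\to G/R$: since $p\circ\Phi$ takes values in the units, $\text{Lie}(p)$ annihilates $\ker Tq$, and as $\operatorname{rank}\text{Lie}(G/R)=\operatorname{rank}A$ the dimension count forces $\ker\text{Lie}(p)=\ker Tq$ exactly; hence $\text{Lie}(p)$ realizes $\text{Lie}(G/R)$ as the canonical quotient $q^!A/\ker Tq\cong A$, so $A$ is integrable and (2)$\Rightarrow$(1) follows. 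The main obstacle throughout is the smoothness of $G/R$: the translation action need not be free when $\Phi$ is non-injective, so one cannot simply invoke a principal-bundle quotient, and it is precisely the transversality computation for the slices $G_{ij}$---using that $\text{Lie}(\Phi)$ is exactly $\ker Tq\hookrightarrow q^!A$ and that $q$ is a submersion---that must be carried out with care.
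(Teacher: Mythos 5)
Your proposal follows essentially the paper's own route: the direction (1)$\Rightarrow$(2) via the pullback groupoid and $\Phi(s,s')=(s,\mathtt{u}(q(s)),s')$ is identical, and your section-slices $G_{ij}$ are exactly the paper's charts (in Lemma \ref{liequo3} the slices arise from submersion charts $V_i=U_i\times F$ by fixing points of $F$, which is the same thing as choosing local sections of $q$); your transversality computation and the unique-representative argument are correct. The genuine gap is that you never address Hausdorffness. In this paper a Lie groupoid is a smooth groupoid whose \emph{base and source-fibers are Hausdorff}, while the total space $G$ is allowed to be non-Hausdorff, and this is precisely the delicate case the paper's proof is designed to handle. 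Your atlas produces a smooth, possibly non-Hausdorff groupoid $G/R$; each piece $\{g\in G_{ij}:\mathtt{s}_G(g)=\sigma_j(m)\}$ of a source fiber of $G/R$ is Hausdorff (it sits inside the Hausdorff source fiber $\mathtt{s}_G^{-1}(\sigma_j(m))$), but a manifold glued from Hausdorff charts can perfectly well fail to be Hausdorff (line-with-two-origins phenomenon), and nothing in your argument rules this out for the fibers $\overline{\mathtt{s}}^{-1}(m)$. Without Hausdorff source fibers you may not call $G/R$ a Lie groupoid, and hence cannot conclude that $A$ is integrable in the paper's sense. The paper closes this by showing that the map $T(x,g,y)=(\Phi(x)g\Phi(y),g)$ of \eqref{comdougro} is an injective \emph{proper} immersion, restricting it over $\mathtt{s}_G^{-1}(x)\times\mathtt{s}_G^{-1}(x)$ (which is Hausdorff), and invoking Godement's theorem to get that each $\overline{\mathtt{s}}^{-1}(q(x))$ is a Hausdorff manifold; some such properness/closedness argument must be added to your proof.

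A second, softer point concerns your last step. The quotient $q^!A/\ker Tq$ is canonically $q^*A$, a bundle over $S$, so what your rank count actually produces is a fiberwise isomorphism $A_{q(s)}\to \text{Lie}(G/R)_{q(s)}$ for each $s\in S$; to obtain an isomorphism $A\cong\text{Lie}(G/R)$ over $M$ one must still check that this map does not depend on the choice of $s$ in the $q$-fiber. Along connected fibers this follows from bracket-compatibility (pullback sections are characterized by having brackets with $\Gamma(\ker Tq)$ lying in $\Gamma(\ker Tq)$, which forces the fiberwise isomorphisms to be locally constant along fibers), but for disconnected fibers it is a real issue---note that Theorem \ref{main} explicitly adds the connected-fibers hypothesis in its ``moreover'' clause. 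To be fair, the paper's own proof is equally terse here (``by definition of the bracket in $q^!A$'' following Lemma \ref{liequo}), so this is a shared soft spot rather than a defect peculiar to your argument; but in a complete write-up you should supply the constancy-along-fibers argument rather than asserting the quotient is ``canonically'' $A$.
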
 
\begin{rema} The previous result has been observed in the particular case in which $q:S \rightarrow {M}$ is a principal bundle and hence $S \times_M S \rightrightarrows S$ is an action groupoid \cite{ferigl,braferdir,blowei,ferstrcar,intpoihom}. In this situation, the quotient map $G \rightarrow G/R$ is a {\em Morita fibration} \cite{riemetsta}. \end{rema}
 Let us explain the main steps in the proof of Theorem \ref{liequo4}. 

{\em Step 1.} The fact that condition 1 implies condition 2 of the previous theorem is a consequence of the construction of {\em pullback Lie groupoids}.
\begin{defi}[\cite{higmacalg}] Let $q: S \rightarrow M$ be a submersion and let $K \rightrightarrows M$ be a Lie groupoid, then the {\em pullback Lie groupoid} $q^!K \rightrightarrows S$ is the Lie groupoid 
		\[ q^!K:=\{(x,g,y)\in (S \times_M S) \times {K} \times( S \times_M S):q(x)=\mathtt{t}(g),q(y)=\mathtt{s}(g)\}, \] 
where the source and target are the projections to $S$ and the multiplication is given by $\mathtt{m}( (x,a,y),(y,b,z))=(x,\mathtt{m}(a,b),z)$. \end{defi} 
If ${A}$ is integrable by some Lie groupoid $K$, then $q^!A$ is integrable by the pullback groupoid $q^!K$, see \cite[Corollary 1.9]{higmacalg}. We have that $q^!K$ is a $q$-admissible integration of $q^!A$: we simply define the map $\Phi:S \times_M S \rightarrow q^!K$ by means of $\Phi(x,y)=(x,\mathtt{u}(q(x)),y)$. 

{\em Step 2.} If condition 2 holds, we shall see that the equivalence relation $R$ is induced by a {\em smooth congruence} on $G$ \cite{macgen}. Let us denote $S \times_M S \rightrightarrows S$ by $H \rightrightarrows S$ and let us consider the fiber product $(H \times H) \times_{M \times M} G:=\{(x,g,y)\in H \times G \times H:\mathtt{s}_H(x)=\mathtt{t}_G(g),\mathtt{t}_H(y)=\mathtt{s}_G(g)    \}$. The map
	\begin{align} T:(H \times H) \times_{M \times M} G \rightarrow G \times G, \quad T(x,g,y)=(\Phi(x)g\Phi(y),g) \label{comdougro} \end{align}   
	is an injective proper immersion whose image is the equivalence relation $R\subset G \times G$. In particular, $R $ is a groupoid over $G$. Notice that $R \rightrightarrows H$ is also a subgroupoid of the product Lie groupoid $G \times G \rightrightarrows S \times S$. This fact guarantees that there is a unique (algebraic) groupoid structure on $G/R$ such that the quotient map $Q:G \rightarrow G/R$ is a groupoid morphism. If $G$ is Hausdorff, then $T$ is a closed embedding and so Godement's Theorem \cite{serlie} implies that $ G/R$ is a Hausdorff smooth manifold. Therefore, we get that $G/R \rightrightarrows M$ admits a unique Lie groupoid structure such that $Q$ is a Lie groupoid morphism, see \cite[Thm. 2.4.6]{macgen} (notice that a Lie groupoid in the sense of \cite{macgen} is assumed to be Hausdorff). If $G$ is not Hausdorff, it is not automatic that $R$ is an embedded submanifold. For the general case we shall use instead the following argument: we shall produce a slice for the orbits of $R$ on $G$ at every point in $G$ and we shall see that the collection of these slices defines a smooth structure on  $G/R$. This argument shall be described in the following couple of lemmas.

First of all, we need a simple property of pullback Lie algebroids.
\begin{lem}\label{liequo} Let $\phi:B \rightarrow A$ be fibre-wise surjective Lie algebroid morphism over a submersion $q:S\rightarrow M$. Suppose that $\mathtt{a}_B $ defines an isomorphism $\ker \phi\cong\ker Tq$. Then $B\cong q^!A$. \end{lem}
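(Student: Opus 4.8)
The plan is to characterize $q^!A$ by its defining universal property and then verify that $B$ satisfies it. Recall that $q^!A = \{X \oplus U \in TS \oplus q^*A : Tq(X) = \mathtt{a}(U)\}$ with anchor the projection to $TS$. The natural map I would construct is
\[ \psi : B \rightarrow TS \oplus q^*A, \quad \psi(b) = \mathtt{a}_B(b) \oplus \bar{\phi}(b), \]
where $\mathtt{a}_B$ is the anchor of $B$ and $\bar{\phi}$ is the bundle map $B \rightarrow q^*A$ underlying the Lie algebroid morphism $\phi$ (over the submersion $q$). The compatibility condition $Tq(\mathtt{a}_B(b)) = \mathtt{a}_A(\phi(b))$ is exactly the statement that $\phi$ intertwines the anchors, so $\psi$ lands in $q^!A$ and moreover respects anchors by construction, since the anchor of $q^!A$ is projection to the first factor.

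First I would check that $\psi$ is a fibrewise isomorphism, which is where the two hypotheses enter. Over a point $s \in S$, surjectivity follows from a dimension count: $\psi$ is injective because if $\mathtt{a}_B(b)=0$ and $\bar\phi(b)=0$ then $b \in \ker\phi$ and $b \in \ker\mathtt{a}_B$, but $\mathtt{a}_B$ restricts to an isomorphism on $\ker\phi$, forcing $b=0$. For the ranks, the fibrewise surjectivity of $\phi$ gives $\operatorname{rank} B = \operatorname{rank} A + \operatorname{rank}(\ker\phi)$, and $\ker\phi \cong \ker Tq$ gives $\operatorname{rank}(\ker\phi) = \dim S - \dim M$; on the other side $\operatorname{rank} q^!A = \operatorname{rank} A + (\dim S - \dim M)$ directly from the transversality description of the fibre. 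Hence the ranks agree and $\psi$ is an isomorphism of vector bundles over $S$.

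It then remains to verify that $\psi$ is a morphism of Lie algebroids, i.e.\ that it intertwines the brackets. Here I would use that the bracket on $q^!A$ is determined on the spanning sections $X \oplus fU$ with $U$ a pullback of a section of $A$ and $f \in C^\infty(S)$, by the explicit formula recalled in the preliminaries. The strategy is to show that $\psi$ carries sections of $B$ of two distinguished types to such spanning sections: sections in the image of $\ker\phi$ map, via $\mathtt{a}_B$, to sections of $\ker Tq \hookrightarrow q^!A$ (and both inclusions are Lie algebroid morphisms), while $\phi$-related sections map to sections of the form $0 \oplus U$. Since $\psi$ is already known to be a bundle isomorphism covering the identity of $S$ and $\phi$ is a bracket-preserving map into $q^*A$, matching the brackets reduces to comparing the two Leibniz-type expansions term by term, using that $\phi$ preserves brackets and anchors.

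The main obstacle I anticipate is the bracket verification rather than the pointwise isomorphism, because the bracket on $q^!A$ is only specified on a generating set of sections and one must argue that agreement on generators plus the Leibniz rule suffices. Concretely, the subtlety is that a general section of $B$ is a $C^\infty(S)$-combination of $\phi$-projectable sections and sections tangent to the fibres of $q$, and one must confirm that $\psi$ sends such combinations to the correspondingly decomposed sections of $q^!A$ compatibly with the derivative terms $\mathcal{L}_X g\, U$ appearing in the bracket formula. Once this compatibility is checked on generators, the Leibniz rule propagates it to all sections, yielding the desired isomorphism $B \cong q^!A$.
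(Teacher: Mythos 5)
Your proposal takes essentially the same route as the paper: the same map $b \mapsto \mathtt{a}_B(b)\oplus\phi(b)$, the same kernel argument for injectivity, and the same rank count to conclude it is an isomorphism. The only divergence is at the step you flag as the main obstacle: the paper dispatches the bracket compatibility in one stroke by invoking the universal property of $q^!A$ (Proposition 1.8 of Higgins--Mackenzie, which says precisely that the compatible pair $(\mathtt{a}_B,\phi)$ of morphisms into $TS$ and $A$ induces a Lie algebroid morphism into $q^!A$), whereas your term-by-term verification on generating sections is a workable but more laborious substitute for that citation.
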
 
	\begin{proof} The map $\chi:B \rightarrow q^!A$ defined by $\chi(u)=(\mathtt{a}(u),\phi(u))$ is a morphism of Lie algebroids thanks to the universal property of $q^!A$ \cite[Proposition 1.8]{higmacalg}. We have that $\chi$ is injective since $\mathtt{a}_B|_{\ker \phi}$ is injective. Since $B$ and $q^!A$ have the same rank, $\chi$ is an isomorphism. \end{proof}

\begin{lem}\label{liequo3} Suppose that $\Phi:S \times_M S \rightarrow G$ is a Lie groupoid morphism such that $\text{Lie} (\Phi)$ is the inclusion $\ker Tq \hookrightarrow q^!A$. Then the quotient $G/R$ inherits a unique Lie groupoid structure such that the quotient map $G \rightarrow G/R$ is a Lie groupoid morphism and $\text{Lie} (G/R)\cong A$. \end{lem}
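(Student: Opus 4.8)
The algebraic groupoid structure on $G/R$, together with the fact that the quotient map $Q\colon G\to G/R$ is a groupoid morphism, has already been established, so the plan is to carry out the two remaining tasks: to endow $G/R$ with a smooth structure for which $Q$ is a submersion, and then to identify $\text{Lie}(G/R)$ with $A$. Recall that $R\subset G\times G$ is the image of the injective proper immersion $T$ of \eqref{comdougro}, hence an injectively immersed Lie subgroupoid of $G\times G$; when $G$ is Hausdorff its image is a closed embedded submanifold and Godement's theorem supplies the smooth structure directly, so the only genuine difficulty is the (possibly non-Hausdorff) general case, which I would treat by exhibiting explicit slices for the $R$-action and then checking that the smooth structure they define is independent of the choices, whence its uniqueness.

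For the slices I would fix, for each $g\in G$ with $\mathtt t_G(g)=s_1$, $\mathtt s_G(g)=s_2$, local sections $\sigma_1,\sigma_2\colon U\to S$ of $q$ through $s_1,s_2$ and set $\Sigma_{\sigma_1,\sigma_2}:=\mathtt t_G^{-1}(\sigma_1(U))\cap \mathtt s_G^{-1}(\sigma_2(U))$. The key computation is that the infinitesimal left and right $\Phi$-translations, i.e. the tangent directions to the $R$-orbits, have $\mathtt t_G$- and $\mathtt s_G$-images filling out exactly the subspaces $\ker Tq$ that $T\sigma_1(U)$ and $T\sigma_2(U)$ miss (since $\sigma_i$ is a section, $T\sigma_i(U)\oplus\ker Tq=TS$ along $\sigma_i(U)$). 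This yields both the transversality of $\mathtt t_G^{-1}(\sigma_1(U))$ and $\mathtt s_G^{-1}(\sigma_2(U))$, so that $\Sigma_{\sigma_1,\sigma_2}$ is a submanifold, and the transversality of $\Sigma_{\sigma_1,\sigma_2}$ to the $R$-orbits. From the explicit form of the orbit $\{\Phi(x)g\Phi(y)\}$ one sees that the element of a given orbit with prescribed source on $\sigma_2(U)$ and target on $\sigma_1(U)$ is unique, so each nearby orbit meets $\Sigma_{\sigma_1,\sigma_2}$ in exactly one point and $\dim\Sigma_{\sigma_1,\sigma_2}=\dim G-2(\dim S-\dim M)=\dim M+\operatorname{rank}A$. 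I would then declare the maps $Q|_{\Sigma_{\sigma_1,\sigma_2}}$ to be charts, verify that the transition maps — obtained by sliding along orbits, that is, by multiplying with suitable values of $\Phi$ — are smooth, and check that the base $M$ and the $\mathtt s_{G/R}$-fibres are Hausdorff. Smoothness of the structure maps of $G/R$ then follows because those of $G$ descend through the submersion $Q$.

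It remains to identify $\text{Lie}(G/R)$. Since $Q$ is a submersion, $\text{Lie}(Q)\colon \text{Lie}(G)\cong q^!A\to \text{Lie}(G/R)$ is a fibre-wise surjective Lie algebroid morphism over $q$. Using \eqref{equrel} one checks that $\Phi(x)\sim \mathtt u_G(\mathtt s_H(x))$, so $Q\circ\Phi$ takes values in the unit space; hence $\text{Lie}(Q)\circ\text{Lie}(\Phi)=0$, giving $\ker\text{Lie}(Q)\supseteq \text{im}\,\text{Lie}(\Phi)=\ker Tq$, with equality by the rank count $\operatorname{rank}\ker\text{Lie}(Q)=\operatorname{rank}(q^!A)-\operatorname{rank}A=\dim S-\dim M$ (using $\dim(G/R)$ computed above). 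As the anchor of $q^!A$ restricts to the identity isomorphism $\ker Tq\cong\ker Tq\subset TS$ on the copy $\{X\oplus 0\}$, Lemma \ref{liequo} applies to $\text{Lie}(Q)$ and yields $q^!A\cong q^!(\text{Lie}(G/R))$. To descend this to $\text{Lie}(G/R)\cong A$, I would compare $\text{Lie}(Q)$ with the canonical projection $\text{pr}\colon q^!A\to A$: both are fibre-wise surjective morphisms over $q$ with the same kernel $\ker Tq$, so they induce an isomorphism of their targets, provided the resulting fibre-wise identification is independent of the chosen preimage in a $q$-fibre.

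I expect the main obstacle to be twofold. First, the construction of the smooth structure in the non-Hausdorff case: verifying that the slices $\Sigma_{\sigma_1,\sigma_2}$ genuinely glue to a (in general non-Hausdorff) smooth manifold with $Q$ a submersion, without any Hausdorffness assumption on $G$, is where the bulk of the careful work lies, and it is precisely the point at which Godement's theorem is unavailable. Second, the $q$-fibrewise independence needed at the end of the Lie-functor computation; I would resolve this by $\Phi$-equivariance, namely that the adjoint action of $\Phi(s',s)$ intertwines the fibres of $q^!A$ over $s$ and $s'$ compatibly with $\text{pr}$, while $Q$ sends $\Phi(s',s)$ to a unit, so that $\text{Lie}(Q)$ is constant along $q$-fibres in the required sense and the isomorphism descends to $\text{Lie}(G/R)\cong A$.
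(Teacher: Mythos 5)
Your proposal is correct and follows essentially the same route as the paper: your slices $\Sigma_{\sigma_1,\sigma_2}$ are exactly the paper's local models (a local section of $q$ is the same datum as a submersion chart with a chosen point of the fibre factor $F$), your orbit-sliding transition maps are the paper's maps $\chi(g)=\mathtt{m}(g,\phi(\mathtt{s}(g),y))$, and your identification of $\text{Lie}(G/R)$ via Lemma \ref{liequo} applied to the fibre-wise surjective morphism $\text{Lie}(Q)$ with kernel $\ker Tq$ is the paper's argument. The only differences are ones of emphasis: the paper spells out how to get Hausdorffness of the $\overline{\mathtt{s}}$-fibres (by applying Godement's theorem to the restriction of the proper map \eqref{comdougro} over a single source fibre, which is Hausdorff even when $G$ is not), a point you flag but do not carry out, while you give more detail than the paper on the final descent from $q^!A\cong q^!\,\text{Lie}(G/R)$ to $A\cong\text{Lie}(G/R)$, which the paper defers to the proof of Theorem \ref{liequo4} and settles by appealing to the definition of the bracket on $q^!A$.
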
 
	\begin{proof} Let us denote $G/R$ by $\overline{G}$. The quotient map $Q:G \rightarrow \overline{G} $ induces an isomorphism of topological groupoids $G\cong q^! \overline{G} $ given by the map $g\mapsto (\mathtt{t}_G(g),Q(g),\mathtt{s}_G(g)) $ (which is an open continuous bijection and hence a homeomorphism). Since $q$ is a submersion, we can find a chart for every point in $S$ such that $q$ looks like a projection; let $V_i=U_i \times F$ be such charts for $i=1,2$ and suppose that $q|_{V_i}$ is identified with the projection to $U_i$. Then we have that $W:=\mathtt{s}_G^{-1}(V_1)\cap \mathtt{t}_G^{-1}(V_2) $ is homeomorphic to 
\[ \{ (\overline{\mathtt{t}}(k),x,k,\overline{\mathtt{s} }(k),y )\in V_1 \times \overline{G} \times V_2|x,y\in F, k\in \overline{\mathtt{s}}^{-1}(U_1)\cap \overline{\mathtt{t}}^{-1}(U_2)\}\subset q^! \overline{G}, \]  
where $\overline{\mathtt{s}  }, \overline{\mathtt{t} } $ denote the source and target map induced on $\overline{G} $. This means that $\overline{\mathtt{s}}^{-1}(U_1)\cap \overline{\mathtt{t}}^{-1}(U_2)$ is defined as the inverse image of a point under the submersion 
\[ (p_1,p_2)\circ (\mathtt{t}, \mathtt{s}) :  W \rightarrow F \times F, \]
where $p_i:V_i \rightarrow F$ is the projection. Then the open subset $\overline{\mathtt{s}}^{-1}(U_1)\cap \overline{\mathtt{t}}^{-1}(U_2)$ of $\overline{G}$ inherits a smooth structure; we shall see that this smooth structure does not depend on the identification that we used. First of all, a change of submersion charts for $q$ would change the smooth structure induced on $\overline{\mathtt{s}}^{-1}(U_1)\cap \overline{\mathtt{t}}^{-1}(U_2)$ by a diffeomorphism so it is independent of the choice of submersion charts. Let us see that the smooth structure on $\overline{\mathtt{s}}^{-1}(U_1)\cap \overline{\mathtt{t}}^{-1}(U_2)$ does not depend on the points of the $q$-fibers that we picked in order identify it with a submanifold of $G$. Suppose that on an open set $V'_1\subset S$ such that $q(V_1)=q(V_1')\cong U_1$ there is another submersion chart which allows us to identify it with $U_1 \times F'$. For every $(a,b)\in F \times F'$ define a map 
\[ \chi:\mathtt{s}^{-1}(\text{pr}_2^{-1}(a))\cap \mathtt{t}^{-1}(V_2) \rightarrow \mathtt{s}^{-1}(\text{pr}_2^{-1}(b))\cap \mathtt{t}^{-1}(V_2)\] 
by $\chi(g)=\mathtt{m}( g,\phi(\mathtt{s}(g),y))$, where $(\mathtt{s}(g),y)\in V_1 \times V_1'$ is defined by the conditions $q(\mathtt{s}(g))=q(y)$ and $\text{pr}_2(y)=b$ for $\text{pr}_2:V_1' \rightarrow F'$ the projection. We have that $\chi$ is a diffeomorphism which restricts to a diffeomorphism between the smooth structures induced on $\overline{\mathtt{s}}^{-1}(U_1)\cap \overline{\mathtt{t}}^{-1}(U_2)$ as above. This implies that they do not depend on the choice of a point on the $q$-fiber. We proceed analogously with a different choice of $V_2$. Since $W$ as above is diffeomorphic to $F \times \overline{\mathtt{s}}^{-1}(U_1)\cap \overline{\mathtt{t}}^{-1}(U_2) \times F$, the restricted quotient map $Q|_W:W \rightarrow  \overline{\mathtt{s}}^{-1}(U_1)\cap \overline{\mathtt{t}}^{-1}(U_2)$ is a submersion. As a consequence, $\overline{G}$ inherits a (unique) smooth structure such that $Q$ has the universal property of a quotient, see \cite[Chapter III]{serlie}. 

Let us check that $\overline{\mathtt{s} }$-fibers in $\overline{G} $ are Hausdorff. Take $C=\mathtt{s}_G^{-1}(x) \times {\mathtt{s} }_G^{-1}(x)\subset G \times G$; then $f:=T|:T^{-1}(C)\cong H \times_S(\mathtt{s}^{-1}_G(x)) \rightarrow C$ is also an injective proper immersion since $C$ is closed, where $T$ is the map \eqref{comdougro}. Since $\mathtt{s}^{-1}_G(x) $ is Hausdorff, $f$ is a closed embedding and so Godement's Theorem implies that the quotient space $\overline{\mathtt{s}}^{-1}(q(x)) $ is a Hausdorff manifold as well. The source, target, unit and inversion maps descend to smooth maps between $\overline{G}$ and $M=S/H$ thanks to the universal property of the quotient. In particular, we get that the induced source map $\overline{\mathtt{s} }:\overline{G} \rightarrow M$ makes the following diagram commute:
\[ \xymatrix{G \ar[r]^-Q\ar[d]_{\mathtt{s}_G}  &\overline{G} \ar[d]^{\overline{\mathtt{s}  }}  \\ S \ar[r]_q &M. } \]  
By taking derivatives we get that $Tq\circ T \mathtt{s}_G =T \overline{\mathtt{s} }\circ T Q$. Since the left hand side is fiber-wise surjective, so is $T \overline{\mathtt{s} } $ and hence  $ \overline{\mathtt{s}}$ is a submersion. 
 
By counting dimensions we get that $(Q,Q)|_{G \times_S G}$ is also a submersion and hence $\mathtt{m}_G$ induces a smooth multiplication map on $ \overline{G}$:
\[ \xymatrix{G \times_S G \ar[d]_{(Q,Q)}\ar[r]^-{\mathtt{m}_G}  &G\ar[d]^-Q    \\ \overline{G} \times_M\overline{G} \ar[r]_-{\overline{\mathtt{m}}}& \overline{G} .} \]
Therefore, $\overline{G} $ is a Lie groupoid. 

Finally, $Q:G \rightarrow \overline{G}$ induces a morphism of Lie algebroids $\text{Lie}(G)\rightarrow\text{Lie}(\overline{G})$ to which it is associated a surjective vector bundle map $\text{Lie}({G} )\rightarrow q^*\text{Lie}(\overline{G})$ with kernel $\text{Lie}({H} )$. Then the isomorphism $q^!\text{Lie}(\overline{G})\cong \text{Lie}(G)  $ follows from Lemma \ref{liequo}. \end{proof} 
\begin{rema}\label{rem:embsub} In the context of Lemma \ref{liequo4}, we have that $G/R$ is smooth so it follows from Godement's Theorem that $R$ is indeed an embedded submanifold of $G \times G$ and then $R \rightrightarrows G$ is a Lie subgroupoid of the pair groupoid $G \times G \rightrightarrows G$. Let us notice that $R \rightrightarrows G$ can also be seen as an action groupoid $(H \times H) \ltimes G \rightrightarrows G$ with the product groupoid $H \times H \rightrightarrows S \times S$ acting on $(\mathtt{t}_G,\mathtt{s}_G):G \rightarrow S \times S$ via \eqref{equrel}. \end{rema} 			
	\begin{proof}[Proof of Theorem \ref{liequo4}] It only remains to check that condition 2 implies condition 1, so suppose that condition 2 holds. Lemma \ref{liequo3} implies that $G/R$ is a lie groupoid such that $q^!A\cong q^!\text{Lie} (G/R)$ but by definition of the bracket in $q^!A$ it follows that $A\cong \text{Lie} (G/R)$. \end{proof}

\subsection{The case of reducible Dirac structures} As motivation for Theorem \ref{main} (Theorem \ref{pullquo} below), let us consider the following situation. 
\begin{exa}[\cite{intpoihom,craruipoi}]\label{exahopfib} Let $M$ be $\mathbb{S}^2 \times \mathbb{R}$ with the Poisson structure $\pi$ given by multiplying the canonical area form $\omega_0$ on $\mathbb{S}^2$ by a positive function $f\in C^\infty(\mathbb{R})$ and let $S$ be $\mathbb{S}^3 \times \mathbb{R}  $. If $q:S \rightarrow M$ is $p \times \text{id}_{\mathbb{R}} $, where $p: \mathbb{S}^3 \rightarrow \mathbb{S}^2$ is the Hopf fibration, then we have that $L=q^! (T^*M)$ is always integrable while $M$ does not have to be, see Proposition \ref{exapullquo}. \end{exa} 

In this subsection we shall consider the following refinement of Theorem \ref{liequo4} that takes the existence of (pre)symplectic forms into account. 
\begin{defi} Let $G \rightrightarrows S$ be a $q$-admissible integration of a Dirac structure $L$ on $S$, where $q:S \rightarrow M$ is a surjective submersion, and let $\Phi: S \times_M S \rightarrow G $ be the associated Lie groupoid morphism. If $G$ is endowed with a presymplectic form $\omega $ whose infinitesimal counterpart is provided by $L$ and is such that $\Phi^* \omega =0$, then we say that $G$ is a {\em $q$-admissible presymplectic integration of $L$}. \end{defi} This concept generalizes \cite[Definition 2.9]{intpoihom}. If the $q$-fibers are connected, then the submersion groupoid $S \times_M S \rightrightarrows S$ is source-connected and hence the condition $\Phi^* \omega =0$ is automatic \cite{burcab}.  
\begin{thm}\label{pullquo} Let $q:S\rightarrow M$ be a surjective submersion and let $(M,\pi)$ be a Poisson manifold. Then the following are equivalent.
\begin{enumerate} \item $(M,\pi)$ is integrable.
		\item the pullback $L:=\mathfrak{B}_q(\text{graph}(\pi))   $ admits a $q$-admissible presymplectic integration $(G,\omega )$.   
\end{enumerate} 
Moreover, if $\Phi:S \times_M S \rightarrow G$ is the Lie groupoid morphism associated to a $q$-admissible presymplectic integration, then there is a symplectic form $\overline{\omega }\in \Omega^2(G/R)$ such that $Q^* \overline{\omega }=\omega $, where $Q:G  \rightarrow G/R $ is the quotient map and $R$ is given by \eqref{equrel}. \end{thm}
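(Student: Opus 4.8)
The plan is to show that the closed multiplicative form $\omega$ descends along the quotient submersion $Q:G\to\overline G:=G/R$ to a closed multiplicative form $\overline\omega$, and then to argue that $\overline\omega$ is in fact nondegenerate, so that $(\overline G,\overline\omega)$ is a symplectic groupoid integrating $T^*M$. Recall from Lemma~\ref{liequo3} that $Q$ is a surjective submersion whose fibres are the orbits of $R$, and from Remark~\ref{rem:embsub} that $R\rightrightarrows G$ is the action groupoid of $H\times H$ (with $H=S\times_M S$) acting on $G$ through $\Phi$ by $g\mapsto\Phi(x)g\Phi(y)$. Consequently $\ker TQ$ is spanned at each point by the right- and left-invariant vector fields $\overrightarrow{a}$, $\overleftarrow{b}$ generated by sections $a,b$ of $\ker Tq\subset L=A_G$; here we use that $\text{Lie}(\Phi)$ is the inclusion $\ker Tq\hookrightarrow L$, so the infinitesimal generators of the $R$-action are exactly these invariant fields. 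Since a closed form descends along a surjective submersion precisely when it is basic, it suffices to prove (i) horizontality, $i_X\omega=0$ for all $X\in\ker TQ$, and (ii) $R$-invariance.

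For (i) I would invoke the standard contraction formula for the multiplicative form of a presymplectic groupoid \cite{burint}: under the canonical identification $A_G\cong L\subset\mathbb{T}S$, one has $i_{\overrightarrow{a}}\omega=\mathtt{t}^*\!\big(\text{pr}_{T^*S}(a)\big)$ and $i_{\overleftarrow{a}}\omega=-\mathtt{s}^*\!\big(\text{pr}_{T^*S}(a)\big)$ for every $a\in\Gamma(L)$, where $\text{pr}_{T^*S}$ is the projection onto the cotangent summand of $\mathbb{T}S=TS\oplus T^*S$. The point is then purely algebraic: under the isomorphism $L=\mathfrak{B}_q(\text{graph}(\pi))\cong q^!(T^*M)$, the subbundle $\ker Tq$ sits inside $L$ as $\{X\oplus 0:X\in\ker Tq\}$, that is, with vanishing cotangent component. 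Hence $\text{pr}_{T^*S}(a)=0$ for $a\in\ker Tq$, and both contractions vanish, giving horizontality.

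For (ii), when the $q$-fibres are connected the orbits of $R$ are connected and invariance follows already from horizontality together with $\mathcal{L}_X\omega=d\,i_X\omega+i_X\,d\omega=0$. In general I would use the hypothesis $\Phi^*\omega=0$: since $\omega$ is multiplicative, left and right translation by the elements $\Phi(x)$ and $\Phi(y)$ preserve $\omega$ exactly when $\omega$ restricts to zero on the image of $\Phi$ \cite{burcab}, which is the condition defining a $q$-admissible presymplectic integration. This yields $R$-invariance of $\omega$, and together with (i) and $d\omega=0$ produces a unique closed $\overline\omega\in\Omega^2(\overline G)$ with $Q^*\overline\omega=\omega$; uniqueness and closedness use that $Q^*$ is injective on forms because $Q$ is a surjective submersion. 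Multiplicativity of $\overline\omega$ is inherited from that of $\omega$ because $Q$, $Q\times Q$ and $Q\times Q\times Q$ are surjective submersions and $Q$ is a groupoid morphism, so the multiplicativity identity for $\omega$ pushes forward to the one for $\overline\omega$.

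It remains to upgrade $\overline\omega$ from presymplectic to symplectic. By Theorem~\ref{liequo4} we have $\text{Lie}(\overline G)\cong T^*M$, so $(\overline G,\overline\omega)$ is a presymplectic groupoid with $\dim\overline G=2\dim M$ whose associated base Dirac structure is the one integrated by the cotangent algebroid of $(M,\pi)$, namely $\text{graph}(\pi)$. Because $\pi$ is a genuine Poisson bivector, the null distribution $\text{graph}(\pi)\cap(TM\oplus 0)$ is zero, so by the correspondence of \cite{burint} the presymplectic form is nondegenerate and $\overline\omega$ is symplectic, completing the argument. I expect the main obstacle to be the rigorous verification of horizontality via the contraction formula together with the careful handling of the descent and invariance in the possibly non-Hausdorff setting; the nondegeneracy is the genuinely new geometric input, but once $\overline\omega$ is produced it follows cleanly from the base Dirac structure being the graph of a Poisson bivector.
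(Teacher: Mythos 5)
Your descent argument runs parallel to the paper's: horizontality of $\omega$ along the $R$-orbits (which the paper phrases as ``the vectors $u^r-v^l$, $u,v\in\ker Tq$, span the orbit directions and lie in $\ker\omega$'') together with invariance deduced from $\Phi^*\omega=0$ and multiplicativity. However, there are two genuine gaps. The first is an omission: you never prove the implication $(1)\Rightarrow(2)$, which is half of the stated equivalence. The paper does this in one line by taking a symplectic groupoid $\mathcal{G}\rightrightarrows M$ integrating $\pi$ and endowing the pullback groupoid $q^!\mathcal{G}$ with the pullback of the symplectic form along the projection $q^!\mathcal{G}\rightarrow\mathcal{G}$; this is easy but cannot simply be skipped.

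The second gap is in the step you yourself call the genuinely new geometric input: nondegeneracy of $\overline\omega$. The paper's proof rests on the fact, quoted from \cite[Proposition 2.2]{intpoihom}, that the vectors $u^r-v^l$ with $u,v\in\ker Tq$ \emph{generate} $\ker\omega$, i.e.\ $\ker\omega$ \emph{equals} the tangent distribution to the $R$-orbits rather than merely containing it; since $\ker\overline\omega=TQ(\ker\omega)$, this equality is precisely what makes the descended form symplectic. You prove only the inclusion (horizontality) and then try to recover nondegeneracy from the correspondence of \cite{burint}, asserting that the base Dirac structure of $(\overline G,\overline\omega)$ is $\text{graph}(\pi)$ ``because $\text{Lie}(\overline G)\cong T^*M$''. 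That inference is invalid: Theorem \ref{liequo4} yields only an abstract Lie algebroid isomorphism and carries no information about $\overline\omega$, and a Dirac structure can be isomorphic as a Lie algebroid to the cotangent algebroid of $(M,\pi)$ without being the graph of a bivector --- for instance a gauge transform of $\text{graph}(\pi)$ by a closed $2$-form is Lie-algebroid isomorphic to it but can have nontrivial null distribution. Moreover, invoking \cite{burint} presupposes that $(\overline G,\overline\omega)$ is a presymplectic groupoid, i.e.\ that $\ker\overline\omega\cap\ker T\overline{\mathtt{s}}\cap\ker T\overline{\mathtt{t}}=0$, which you do not verify. Both defects are reparable within your strategy --- one can check the kernel condition by lifting vectors and correcting by orbit directions, and identify the induced Dirac structure by observing that $\overline{\mathtt{t}}\circ Q=q\circ\mathtt{t}_G$ is a composition of forward Dirac maps onto $\text{graph}(\pi)$ while $\mathfrak{F}_Q(\text{graph}(\omega))=\text{graph}(\overline\omega)$, then using uniqueness of the induced Dirac structure --- but as written the nondegeneracy claim is unsupported, whereas the paper's route through the exact computation of $\ker\omega$ settles it directly.
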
 
\begin{defi} Let $\mathcal{G} \rightrightarrows P$ be a Lie groupoid and let $\omega \in \Omega^2(P)$ be closed. We say that $\omega $ is $\mathcal{G} $-basic if (1) $\ker \omega $ coincides with the tangent distribution to the $\mathcal{G} $-orbits in $P$ and (2) $\mathtt{t}^* \omega =\mathtt{s}^* \omega $. \end{defi}
If $\omega $ is a $\mathcal{G} $-basic 2-form, it follows from considering the representation of $\mathcal{G} $ on the normal bundle of a $\mathcal{G} $-orbit that it induces a symplectic form on the orbit space of $\mathcal{G} $, provided it is a smooth manifold.
\begin{proof}[Proof of Theorem \ref{pullquo}] If $\mathcal{G} \rightrightarrows M$ is a symplectic groupoid, then $q^! \mathcal{G} $ is a $q$-admissible presymplectic integration of $L$: we just take the pullback of the symplectic form on $\mathcal{G} $ along the projection map $q^! \mathcal{G} \rightarrow \mathcal{G} $ which is a Lie groupoid morphism.  

Let us call $H=S \times_M S$. Suppose that $(G,\omega )$ is a $q$-admissible integration of $L$ and let $\Phi:H\rightarrow {G}  $ be the associated Lie groupoid morphism which integrates the inclusion of Lie algebroids $\ker Tq \hookrightarrow L$. In this situation, $R \rightrightarrows {G} $ is a Lie groupoid, see Remark \ref{rem:embsub}, Let us denote the source and target of this Lie groupoid by $\mathtt{s}^G$ and $\mathtt{t}^G$ respectively, in order not to confuse them with the corresponding maps for $R \rightrightarrows H$, seen as a Lie subgroupoid of $G \times G \rightrightarrows S \times S$. We have that the $R$-orbits on $G$ are tangent to the kernel of ${\omega}$. In fact, the vectors of the form $u^r-v^l$ for $u,v\in \ker Tq$ generate the tangent spaces to the $R$-orbits. But these tangent vectors generate the kernel of ${\omega}$, see \cite[Proposition 2.2]{intpoihom}. On the other hand, since $\Phi^* \omega =0$ and $\omega $ is multiplicative, it follows that $(\mathtt{t}^G)^* \omega =(\mathtt{s}^G)^* \omega $. Therefore, $\omega $ is basic with respect to $R \rightrightarrows {G} $ and so the quotient Lie groupoid $G/R$ (Theorem \ref{liequo4}) inherits a symplectic form $\overline{\omega }$ with the desired property. \end{proof}
\subsection{Some immediate applications} We present three straightforward applications of our main result.
\paragraph{(1)} We study the integrability of Example \ref{exahopfib} in the next proposition; the following result has been obtained with other techniques in \cite{craruipoi,catfel}. Compared to those treatments our approach seems more elementary although it uses some properties of the van Est map \cite{weixu,cradifcoh}. The purpose of the following argument is to illustrate in a concrete situation what is the obstruction for the existence of a Lie groupoid morphism satisfying condition 2 of Theorem \ref{liequo4}. 
\begin{prop}[\cite{catfel}]\label{exapullquo} Let $M$ be $\mathbb{S}^2 \times \mathbb{R}$ with the Poisson structure $\pi$ given by multiplying the canonical area form $\omega_0$ on $\mathbb{S}^2$ by a positive function $f\in C^\infty(\mathbb{R})$. Then $\pi$ is integrable if and only if $f$ is constant or if it does not have any critical points. \end{prop}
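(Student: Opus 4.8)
The plan is to feed the submersion $q=p\times\mathrm{id}_{\mathbb{R}}$ into Theorem \ref{pullquo}, converting the integrability of $(M,\pi)$ into a statement about the pullback Dirac structure $L=\mathfrak{B}_q(\mathrm{graph}(\pi))$ on $S=\mathbb{S}^3\times\mathbb{R}$. As recorded in Example \ref{exahopfib}, $L$ is integrable for every $f$: its presymplectic leaves are the manifolds $\mathbb{S}^3\times\{t\}$ carrying the exact form $p^*(f(t)\omega_0)$, and since $\pi_2(\mathbb{S}^3)=0$ there is no spherical monodromy obstructing integrability of $L$. By Theorem \ref{pullquo}, $(M,\pi)$ is then integrable if and only if $L$ admits a $q$-admissible presymplectic integration, i.e. if and only if the inclusion $\ker Tq\hookrightarrow L$ integrates to a Lie groupoid morphism $\Phi:S\times_M S\to G$ out of the submersion groupoid, for some presymplectic integration $G$ of $L$. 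Here $\ker Tq$ is the vertical distribution of the Hopf $\mathbb{S}^1$-bundle (pulled back trivially along $\mathbb{R}$), its leaves are the Hopf circles, and $S\times_M S\rightrightarrows S$ has source fibers diffeomorphic to $\mathbb{S}^1$; in contrast $\mathcal{G}(\ker Tq)$ has source fibers the universal covers $\mathbb{R}$. By Lie's second theorem the inclusion always integrates to $\widetilde\Phi:\mathcal{G}(\ker Tq)\to G$, so the issue is exactly whether $\widetilde\Phi$ descends along the deck quotient by $\pi_1(\mathbb{S}^1)=\mathbb{Z}$, i.e. whether $\widetilde\Phi$ sends the generator of each Hopf fiber to a unit of $G$.

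The key computation is to identify this ``Hopf-loop monodromy.'' I would extract it with the van Est map together with the multiplicative form on $G$: the isotropy Lie algebra of $L$ along a leaf is the conormal line $\mathbb{R}\,dt$, and the value in it of the loop around the Hopf fiber is the transverse derivative of the period, over that fiber, of a primitive of the leafwise presymplectic form. Writing $p^*\omega_0=d\theta$ on $\mathbb{S}^3$ and using $\int_{\mathrm{fiber}}\theta=A_0:=\int_{\mathbb{S}^2}\omega_0$, the period is $\tfrac{d}{dt}\big(f(t)\int_{\mathrm{fiber}}\theta\big)=A_0\,f'(t)$. It is worth noting that, under the connecting isomorphism $\pi_2(\mathbb{S}^2)\xrightarrow{\sim}\pi_1(\mathbb{S}^1)$ of the Hopf fibration, this is the very same number $A_0 f'(t)$ that governs the integrability of $M$ through the usual spherical (Crainic--Fernandes) monodromy; the role of Theorem \ref{pullquo} is precisely to repackage that $\pi_2$-obstruction as the more transparent $\pi_1$-obstruction upstairs.

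Finally I would run the existence argument for $G$. Every source-connected integration of $L$ is a quotient of $\mathcal{G}(L)$ by a closed normal subbundle $N$ of its isotropy, whose isotropy groups are $\mathbb{R}$ because $L$ has trivial monodromy and simply connected leaves. Such an $N$ depends only on $t$ and, being closed and embedded over the connected line, is either zero or a smooth lattice $\ell(t)\mathbb{Z}$ with $0<\ell\in C^\infty(\mathbb{R})$. The morphism $\Phi$ exists iff we can pick such a $G$ in which $A_0 f'(t)$ becomes a unit, i.e. $A_0 f'(t)\in\ell(t)\mathbb{Z}$ for all $t$; since $A_0 f'(t)/\ell(t)$ is then a continuous integer, it is a constant $k$. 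For $k=0$ one gets $f'\equiv 0$, so $f$ is constant and $G=\mathcal{G}(L)$ works; for $k\ne 0$ one needs $\ell(t)=A_0 f'(t)/k$ to be smooth and positive, forcing $f'$ nowhere zero, and conversely any such $f$ is handled by $\ell(t)=A_0\lvert f'(t)\rvert$. When $f$ is nonconstant but has a critical point, no admissible $\ell$ exists: near a zero $t_0$ of $f'$ that is a limit of points with $f'\ne 0$, the relation $A_0 f'(t)=k\,\ell(t)$ with $\ell(t_0)>0$ forces $k=0$ and hence $f'\equiv 0$ near $t_0$, contradicting the choice of $t_0$. This yields exactly the stated dichotomy.

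The hard part will be Step 2: making the identification of the Hopf-loop monodromy with $A_0 f'(t)$ genuinely rigorous via the van Est map and the multiplicative structure of $\omega$, rather than by analogy with the spherical monodromy. A secondary obstacle is Step 3, where one must argue carefully that the only admissible $N$ are the global lattices $\ell(t)\mathbb{Z}$ (so that a zero of $f'$ with $f$ nonconstant genuinely destroys the smoothness of $G$), keeping track of the same non-Hausdorff subtleties already handled in Lemma \ref{liequo3}.
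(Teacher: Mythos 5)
Your overall strategy coincides with the paper's: reduce via Theorem \ref{pullquo} (really Theorem \ref{liequo4}) to the question of whether the inclusion $\ker Tq\hookrightarrow L$ integrates to a morphism out of the submersion groupoid, and identify the obstruction as the value $A_0f'(t)$ of the image of the generating Hopf loop in the isotropy $\mathbb{R}\cong\nu^*_{(x,t)}$ of $\mathcal{G}(L)$. Your Step 2, which you flag as the hard part, is exactly what the paper carries out: it realizes $\mathcal{G}(L)$ as a twisted action groupoid $\mathcal{G}(\mathcal{F})\ltimes_c\nu^*$ with $c=df\cdot c_0$, where the van Est image of $c_0$ is $p^*\omega_0$, and writes the morphism $\Psi:\mathbb{R}\times S\to\mathcal{G}(\mathcal{F})\ltimes_c\nu^*$ explicitly, so that the image of the $2\pi$-loop is, up to the nonzero constant $\int_{\mathrm{fiber}}\eta=\pm A_0$, precisely $f'(u)$. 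Up to that point your proposal and the paper agree.

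The genuine gap is in your Step 3. The claimed classification of the closed embedded normal subgroupoids $N\subset\mathcal{G}(L)$ contained in the isotropy --- ``either zero or a smooth lattice $\ell(t)\mathbb{Z}$ with $0<\ell\in C^\infty(\mathbb{R})$'' --- is false, and it fails exactly in the case you need for the non-integrability direction. The rank of $N_t$ can jump: for instance $N_t=0$ for $t\le 0$ and $N_t=(1/t)\mathbb{Z}$ for $t>0$ is a closed \emph{embedded} normal subgroupoid (the nonzero lattice points escape to infinity as $t\to 0^+$, so nothing accumulates near the units), and the quotient $\mathcal{G}(L)/N$ is a legitimate source-connected integration of $L$. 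Consequently, when $f$ is nonconstant with a critical point $t_0$, your contradiction ``$A_0f'(t)=k\,\ell(t)$ with $\ell(t_0)>0$ forces $k=0$'' does not apply to an $N$ with $N_{t_0}=0$ and lattices only at nearby points where $f'\neq 0$; your argument simply does not exclude such an $N$. What rules it out is a different observation, which is the actual content of the paper's proof: if $t_n\to t_0$ with $f'(t_n)\neq 0$, then $A_0f'(t_n)\in N_{t_n}$ forces nonzero elements $0<\ell(t_n)\le A_0|f'(t_n)|\to 0$, so non-unit points of $N$ accumulate on the unit section, contradicting that $N$ is an embedded submanifold containing the units (equivalently, local uniform discreteness fails). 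The paper packages this as the statement that $\mathcal{M}=\Psi(2\pi\mathbb{Z}\times S)\cong\mathbb{S}^3\times\{(u,n\,\tfrac{df}{du}(u))\}$ fails to be a manifold at a critical point of a nonconstant $f$, and then invokes \cite[Proposition 4.4]{braferdir}, which says the morphism descends to \emph{some} admissible integration if and only if this minimal candidate $\mathcal{M}$ is embedded --- thereby avoiding any classification of all possible $N$. Your proof becomes correct once the false dichotomy is replaced by this accumulation-at-units argument, or by passing, as the paper does, to the normal subgroupoid generated by the image of the isotropy.
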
 
Let $S$ be $\mathbb{S}^3 \times \mathbb{R}$ and let $q: S \rightarrow M$ be the Hopf fibration $p: \mathbb{S}^3 \rightarrow \mathbb{S}^2$ times the identity on $\mathbb{R}$. The Dirac structure $L=q^! (\text{graph}(\pi))$ is always integrable and even explicitly so, see \cite[\S 8]{burint}. So the inclusion $\ker Tq \hookrightarrow L$ is integrable by a Lie groupoid morphism $\Psi:\mathcal{G}(\ker Tq) \hookrightarrow \mathcal{G}(L)$. Thanks to \cite[Proposition 4.4]{braferdir}, we know that such a morphism descends to a Lie groupoid morphism as in condition 2 of Theorem \ref{liequo4} precisely when the image under $\Psi$ of the isotropy groups of $\mathcal{G}(\ker Tq)$ constitutes an embedded submanifold of $\mathcal{G}(L)$. So we just have to check that this last property is equivalent to $f$ being constant or not having any critical point.
\begin{proof}[Proof of Proposition \ref{exapullquo}] Consider the monodromy groupoid $\mathcal{G} (\mathcal{F} )$ of the foliation $\mathcal{F} $ induced by the projection of $L$ on $TS$. We have that $\mathcal{G} (\mathcal{F})$ is isomorphic to the pair groupoid $\mathbb{S}^3 \times \mathbb{S}^3 \rightrightarrows \mathbb{S}^3$ times the unit groupoid on $\mathbb{R}$. There is an action of $\mathcal{G} (\mathcal{F})$ on the conormal bundle $\nu^*$ of $\mathcal{F}$ which is trivial in this case and we can see that $\mathcal{G} (L)$ is isomorphic to the twisted action groupoid $\mathcal{G} (\mathcal{F}) \ltimes_c \nu^*$, where $c$ is a differentiable 2-cocycle defined as follows. There is a natural transformation $\Phi$ between differentiable cohomology and Lie agebroid cohomology called the van Est map \cite{weixu} which may be defined at the cochain level by an explicit morphism of chain complexes \cite{cradifcoh}: 
\[ \Phi: (C^\bullet(\mathcal{G},E),\delta) \rightarrow (\Omega^\bullet (\text{Lie} (\mathcal{G} ),E),d), \]
where $\mathcal{G} $ is any Lie groupoid and $E$ a representation of $\mathcal{G} $. In our case, the leafwise 2-form $fq^* \omega_0 \in \Omega^2(\mathcal{F} )$ induces a 2-form $ \Omega^2(\mathcal{F},\nu^*)$ as $\omega :=d_{dR}(f q^* \omega_0)=df\wedge q^* \omega_0$, where $d_{dR}$ is the full de Rham differential on $S$. The 2-cocycle $c\in C^2(\mathcal{G} (\mathcal{F}),\nu^* )$ is induced by the 2-form $\omega\in \Omega^2(\mathcal{F},\nu^*) $ as follows. Let $c_0\in C^2(\mathbb{S}^3 \times \mathbb{S}^3,\mathbb{R})$ be such that $\Phi( c_0)=p^* \omega_0$. Since any pair groupoid is proper, its differentiable cohomology vanishes in positive degree \cite{cradifcoh} and so there is $s\in C^1(\mathbb{S}^3 \times \mathbb{S}^3,\mathbb{R})$ such that $\delta s=c_0$. Hence $\eta:=\Phi(s)\in \Omega^1(\mathbb{S}^3)$ is such that $d\eta =q^* \omega_0$. By the explicit formula of $\Phi$ we can see then that $\Phi(df \cdot c_0 )=df \wedge q^* (\omega_0)$ and so we can take $c=df\cdot c_0$. The twisted multiplication on $\mathcal{G} (\mathcal{F} )\ltimes_c \nu^*$ is defined as \[ \mathtt{m} ((g,u),(h,v))=(gh,u+v+c(g,h)); \]
for every $g,h \in \mathcal{G} (\mathcal{F} )$ for which the multiplication is defined. The morphism $\Psi: \mathbb{R} \times S \rightarrow \mathcal{G} (\mathcal{F} )\ltimes_c \nu^*$ induced by Lie's second theorem applied to the inclusion $\ker Tq \hookrightarrow L$ is defined then by the formula 
\[ (t,x,u)\mapsto \left(e^{it}x,x,u,\frac{df}{du}(u) \left(\int_0^t \eta(X_{\mathbb{S}^3} )|_{e^{iv}x} dv-s(e^{it}x,x)\right)\wedge du \right); \]    
for all $(t,x,u)\in \mathbb{R} \times \mathbb{S}^3 \times \mathbb{R}=\mathbb{R} \times S  $, where $X_{\mathbb{S}^3}$ is the vector field induced by the canonical generator of $\text{Lie} (\mathbb{R})$. Let us note that the integral in the previous formula does not vanish for $t=2\pi$ otherwise $\omega_0$ would be exact as well. Let us denote $\mathcal{M}:=\Psi(2\pi \mathbb{Z} \times {S} )\subset \mathcal{G} (\mathcal{F} )\ltimes_c \mathbb{R}$. If $f$ is constant, then $\mathcal{M}\cong \mathtt{u}( S) \hookrightarrow \mathcal{G} (\mathcal{F} )\ltimes_c \mathbb{R} $. If $f$ is non constant and has no critical points, then $\mathcal{M}\cong \mathbb{S}^3 \times \text{graph}(df/du) \times \mathbb{Z}$ is a submanifold of $\mathcal{G} (\mathcal{F} )\ltimes_c \mathbb{R}\cong \mathbb{S}^3 \times \mathbb{S}^3 \times  \mathbb{R} \times \mathbb{R} $. On the other hand, if $f$ is non constant and has some critical point, then $\mathcal{M} $ fails to be a manifold. Indeed, $\mathcal{M} $ is homeomorphic to $\mathbb{S}^3 \times \mathcal{X}  $, where $\mathcal{X}\cong\{(u, n\frac{df}{du}(u) )\in \mathbb{R}^2 |(u,n)\in \mathbb{R}\times \mathbb{Z}\}  $ and the graphs of the functions $u \mapsto n \frac{df}{du}(u) $ do not coincide but have a common point where $df/du$ vanishes. Since the $\mathbb{S}^1$-action on $S$ is Hamiltonian with respect to $L$ with constant moment map $S \rightarrow \mathbb{R}$ \cite{braferdir}, we have that the result follows from \cite[Proposition 4.4]{braferdir}. \end{proof}
\paragraph{(2)} Coisotropic reduction appears frequently in symplectic and Poisson reduction. The following result is concerned with the integrability of a quotient obtained by this method. A corollary of this result is \cite[Thm. 3.6]{ferigl}.
\begin{prop}\label{coired} Let $C$ be a coisotropic submanifold of a Poisson manifold $P$. Suppose that $\pi|_{T^\circ C}$ is injective and the coisotropic reduction $\overline{C}$ of $C$ is a smooth manifold. Then $\overline{C}$ is integrable if and only if the inclusion of Lie algebroids $T^\circ C\hookrightarrow \pi^\sharp(T^\circ C)^\circ$ is integrable by a Lie groupoid morphism with source $C \times_{\overline{C}} C \rightrightarrows C$. \qed\end{prop}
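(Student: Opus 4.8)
The plan is to realize $q\colon C\to\overline{C}$ as the coisotropic reduction and to identify $\pi^\sharp(T^\circ C)^\circ$ with the pullback Lie algebroid $q^!(T^*\overline{C})$, turning the statement into a direct instance of Theorem \ref{liequo4}. First I recall that the characteristic distribution of $C$ is $\pi^\sharp(T^\circ C)\subset TC$, that the reduction $q\colon C\to\overline{C}$ is the (assumed simple) quotient by its foliation, and that $\overline{C}$ carries a reduced Poisson structure $\pi_{\overline{C}}$. The hypothesis that $\pi|_{T^\circ C}$ is injective forces the characteristic distribution to have constant rank $\dim P-\dim C$ and makes the anchor restrict to a vector bundle isomorphism $\pi^\sharp\colon T^\circ C\xrightarrow{\sim}\ker Tq=\pi^\sharp(T^\circ C)$. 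Since $C$ is coisotropic we have $\pi(\alpha,\beta)=0$ for $\alpha,\beta\in T^\circ C$, so $T^\circ C\subset\pi^\sharp(T^\circ C)^\circ$ and the inclusion in the statement is meaningful; a short dual computation also gives $\pi^\sharp\big(\pi^\sharp(T^\circ C)^\circ\big)\subset TC$, so that $\pi^\sharp(T^\circ C)^\circ$ and its subbundle $T^\circ C$ are Lie subalgebroids of the cotangent Lie algebroid $T^*P$.

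The key step is the isomorphism $B:=\pi^\sharp(T^\circ C)^\circ\cong q^!(T^*\overline{C})$, which I obtain from Lemma \ref{liequo}. I define $\phi\colon B\to T^*\overline{C}$ over $q$ by sending $\xi$ to the descent of $\iota^*\xi$: since $\xi\in\pi^\sharp(T^\circ C)^\circ$, the form $\iota^*\xi$ annihilates $\ker Tq=\pi^\sharp(T^\circ C)$ and hence defines a unique covector $\phi(\xi)$ on $\overline{C}$ with $q^*\phi(\xi)=\iota^*\xi$. One checks that $\phi$ is fibre-wise surjective, that $\ker\phi=T^\circ C$, and that the anchor $\pi^\sharp$ of $B$ restricts on $\ker\phi$ to the isomorphism $\pi^\sharp\colon T^\circ C\xrightarrow{\sim}\ker Tq$ found above. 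Granting that $\phi$ is a morphism of Lie algebroids, Lemma \ref{liequo} produces an isomorphism $\chi\colon B\xrightarrow{\sim}q^!(T^*\overline{C})$, $\chi(\xi)=(\pi^\sharp(\xi),\phi(\xi))$; since $\chi(\xi)=(\pi^\sharp(\xi),0)$ for $\xi\in T^\circ C$, this isomorphism carries the inclusion $T^\circ C\hookrightarrow B$ exactly onto the canonical inclusion $\ker Tq\hookrightarrow q^!(T^*\overline{C})$.

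With this identification the proposition follows by applying Theorem \ref{liequo4} to $A=T^*\overline{C}$ and the surjective submersion $q\colon C\to\overline{C}$: by our conventions ``$\overline{C}$ is integrable'' means that $T^*\overline{C}$ is integrable, which by Theorem \ref{liequo4} is equivalent to $q^!(T^*\overline{C})\cong\pi^\sharp(T^\circ C)^\circ$ admitting a $q$-admissible integration, that is, to the inclusion $T^\circ C\hookrightarrow\pi^\sharp(T^\circ C)^\circ$ being integrable by a Lie groupoid morphism with source the submersion groupoid $C\times_{\overline{C}}C\rightrightarrows C$ (the integrability of $\pi^\sharp(T^\circ C)^\circ$ being subsumed in the existence of such a morphism).

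I expect the main obstacle to be the verification that $\phi$ is genuinely a Lie algebroid morphism, equivalently that the subalgebroid bracket of $\pi^\sharp(T^\circ C)^\circ\subset T^*P$ matches the reduced cotangent bracket of $T^*\overline{C}$. The cleanest route is Dirac-geometric: the map $\xi\mapsto\pi^\sharp(\xi)\oplus\iota^*\xi$ identifies $B$ with the backward Dirac structure $L_C:=\mathfrak{B}_\iota(\text{graph}\,\pi)$ on $C$ (bijectivity uses precisely the injectivity of $\pi|_{T^\circ C}$), whose Lie algebroid structure is the Courant--Dorfman one. Coisotropic reduction amounts to saying that $q$ is at once a forward and a backward Dirac map, so that $L_C=\mathfrak{B}_q(\text{graph}\,\pi_{\overline{C}})$, which by the Preliminaries is the Dirac structure underlying $q^!(T^*\overline{C})$ and realizes $\phi$ as the induced Lie algebroid morphism. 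This reduces the bracket compatibility to the naturality of Dirac pullbacks rather than a direct computation with the Koszul bracket.
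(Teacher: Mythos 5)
Your proof is correct and follows exactly the route the paper intends (the proposition is stated with \qed as an immediate application of Theorem \ref{liequo4}): realize $q\colon C\to\overline{C}$ as a surjective submersion, identify $\pi^\sharp(T^\circ C)^\circ\cong q^!(T^*\overline{C})$ using Lemma \ref{liequo} together with the backward-Dirac description of $q^!(T^*\overline{C})$ from the Preliminaries, and note that under this identification a $q$-admissible integration is precisely a Lie groupoid morphism with source $C\times_{\overline{C}}C\rightrightarrows C$ integrating $T^\circ C\hookrightarrow\pi^\sharp(T^\circ C)^\circ$. The details you supply (constant rank of the characteristic distribution from injectivity of $\pi^\sharp|_{T^\circ C}$, $\ker\phi=T^\circ C$ with anchor giving $T^\circ C\cong\ker Tq$, and the bracket compatibility via $L_C=\mathfrak{B}_\iota(\mathrm{graph}\,\pi)=\mathfrak{B}_q(\mathrm{graph}\,\pi_{\overline{C}})$) are precisely what the paper leaves implicit.
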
 
\paragraph{(3)} As the last application of this section we discuss the weak Morita invariance of integrability.
		\begin{defi}\cite{gingro} Let $A_i$ be Lie algebroids over $M_i$, $i=1,2$. Then $A_1$ and $A_2$ are {\em weakly Morita equivalent} if there is a manifold $S$ and surjective submersions $q_i:S\rightarrow M_i$ with simply-connected fibers such that the pullback Lie algebroids $q_i^! A_i$ are isomorphic. \end{defi}  
\begin{prop}\label{morinvint} If two Lie algebroids are weakly Morita equivalent, then one is integrable if and only if the other one is; moreover, for every Lie groupoid which integrates one of them we can find a Lie groupoid which is Morita equivalent to it and integrates the other Lie algebroid. \end{prop}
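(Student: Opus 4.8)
The plan is to deduce everything from Theorem \ref{liequo4}, using the simple-connectedness of the fibers as the single essential ingredient. Write $L := q_1^! A_1 \cong q_2^! A_2$, a Lie algebroid over $S$ which is simultaneously a pullback of $A_1$ along $q_1$ and of $A_2$ along $q_2$. The observation I would record first is that, when the fibers of a surjective submersion $q:S\rightarrow M$ are simply connected, the submersion groupoid $S \times_M S \rightrightarrows S$ is source-simply-connected, since its source fiber over $x$ is diffeomorphic to the $q$-fiber through $x$. As its Lie algebroid is $\ker Tq$, this means $S \times_M S \cong \mathcal{G}(\ker Tq)$ is precisely the source-simply-connected (monodromy) integration of $\ker Tq$.

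First I would establish the equivalence of integrability. Suppose $A_1$ is integrable. By Theorem \ref{liequo4}, $q_1^! A_1 = L$ admits a $q_1$-admissible integration, so in particular $L$ is integrable. Turning to $A_2$, the inclusion $\ker Tq_2 \hookrightarrow q_2^! A_2 = L$ is a morphism between integrable Lie algebroids, so Lie's second theorem produces a unique Lie groupoid morphism $\mathcal{G}(\ker Tq_2) \rightarrow \mathcal{G}(L)$ integrating it. By the observation above $\mathcal{G}(\ker Tq_2) = S \times_{M_2} S$, so this is a morphism whose source is the submersion groupoid. Hence $\mathcal{G}(L)$ is a $q_2$-admissible integration of $q_2^! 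A_2 = L$, and Theorem \ref{liequo4} gives that $A_2$ is integrable. The reverse implication is symmetric in the indices.

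For the additional claim, let $G_1$ be any Lie groupoid integrating $A_1$. I would form the pullback Lie groupoid $G := q_1^! G_1$, which integrates $L = q_1^! A_1$ (Step 1 in the proof of Theorem \ref{liequo4}) and is Morita equivalent to $G_1$. Running the argument of the previous paragraph with $G$ in place of $\mathcal{G}(L)$, Lie's second theorem yields a morphism $\Phi_2 : S \times_{M_2} S = \mathcal{G}(\ker Tq_2) \rightarrow G$ integrating $\ker Tq_2 \hookrightarrow L$, exhibiting $G$ as a $q_2$-admissible integration of $q_2^! A_2$. By Theorem \ref{liequo4} the quotient $G/R_2$, for $R_2$ the equivalence relation determined by $\Phi_2$, is then a Lie groupoid with $\text{Lie}(G/R_2)\cong A_2$. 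Finally, the proof of Lemma \ref{liequo3} gives an isomorphism $G \cong q_2^!(G/R_2)$, and a pullback groupoid along a surjective submersion is Morita equivalent to the groupoid it is pulled back from; chaining the equivalences $G_1 \sim G = q_1^! G_1 \cong q_2^!(G/R_2) \sim G/R_2$ yields a Morita equivalence between $G_1$ and the integration $G/R_2$ of $A_2$, as required.

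I expect the only genuine subtlety to be the point where the simple-connectedness hypothesis enters: guaranteeing that the morphism integrating $\ker Tq_2 \hookrightarrow L$ can be taken to have the submersion groupoid, rather than merely the monodromy groupoid $\mathcal{G}(\ker Tq_2)$, as its source. In general these two groupoids differ and the required descent genuinely fails, as Proposition \ref{exapullquo} illustrates; the simple-connectedness of the fibers collapses them into one another and removes the obstruction automatically.
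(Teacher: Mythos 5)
Your proposal is correct and follows essentially the same route as the paper's (much terser) proof: use the simple-connectedness of the $q_i$-fibers to identify the submersion groupoids with the monodromy groupoids $\mathcal{G}(\ker Tq_i)$, apply Lie's second theorem to obtain a morphism into the pullback integration $q_1^!G_1$, and then invoke Theorem \ref{liequo4}. Your explicit chain of Morita equivalences $G_1 \sim q_1^!G_1 \cong q_2^!(G/R_2) \sim G/R_2$ via the isomorphism from Lemma \ref{liequo3} just spells out what the paper leaves implicit in ``Theorem \ref{liequo4} implies the result.''
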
 
		\begin{proof} Let $G_i$ be an integration of $A_i$. Lie's second theorem implies that a weak Morita equivalence between $A_i$ Lie algebroids over $M_i$, $i=1,2$ given by surjective submersions $q_i:S\rightarrow M_i$ induces a morphism from $(S \times_{M_i} S \rightrightarrows S) \cong \mathcal{G}(\ker Tq_i)$ to $ q_i^!(G_i)$ integrating the inclusion of Lie algebroids so Theorem \ref{liequo4} implies the result. \end{proof} 
\begin{rema} As we saw in the proof of Lemma \ref{liequo3}, if $\Phi:H=S \times_M S \rightarrow G$ integrates the inclusion $\ker Tq \hookrightarrow q^!A$ corresponding to a surjective submersion $q:S \rightarrow M$, then the $\mathtt{s}$-fibers of the quotient groupoid $\overline{G}= G/R$ are diffeomorphic to the quotients of the $\mathtt{s}$-fibers in $G$ by the restricted $H$-action, see Remark \ref{rem:embsub}. As a consequence, if the $q$-fibers are 1-connected, then $G$ is source-simply-connected if and only if so is $\overline{G}$, see \cite[Proposition 7.7]{gingro}. So the proof of Proposition \ref{morinvint} implies that a weak Morita equivalence between integrable Lie algebroids induces a Morita equivalence \cite{dufzun} between their source-simply-connected integrations, see also \cite{poista}. \end{rema}      
			A weak Morita equivalence, as proved in \cite{craruipoi}, preserves many Lie algebroid invariants, among which there are the monodromy groups which control the integrability of a Lie algebroid. However, it is not quite explicit in \cite{craruipoi} whether the uniform discreteness of these monodromy groups (which implies integrability) is preserved by such an equivalence.

\section{Integration of homogeneous spaces}\label{sec:homspa} The goal of this section is to offer some additional concrete applications of Theorem \ref{liequo4}. In this respect, homogeneous spaces in Poisson and Dirac geometry are particularly relevant since their construction in terms of infinitesimal data typically involves some sort of reduction of a Dirac structure. 

The concept of Poisson group \cite{driham} can be generalized in two different directions:
\begin{itemize} \item we can preserve the underlying group structure but generalizing the geometry over it in order to obtain the concept of ``Dirac group''. When talking about Dirac structures and Dirac morphisms we can restrict ourselves to (1) the standard Courant algebroid and forward Dirac maps or (2) we can work with Manin pairs and Manin pair morphisms. The availability of these two approaches implies that there are two possible definitions of ``Dirac group'', see \cite{ortmul,liedir}; 
\item we can generalize the underlying algebraic structure from group to groupoid while preserving the same kind of geometric structure over it (Poisson brackets): this process gives us the concept of {\em Poisson groupoid} \cite{weicoi}. \end{itemize} 
Analogously, the concepts of ``Poisson action'' and ``Poisson homogeneous space'' admit generalizations corresponding to each of the previous definitions and in all these cases there are classification results in terms of infinitesimal data. We shall study some particular examples of these constructions in which the associated Poisson or Dirac structures can be explicitly integrated thanks to Theorem \ref{liequo4}, leaving the study of other of their features and applications for later work.
\subsection{Quotients of action Lie algebroids and Dirac homogeneous spaces}\label{subsec:dirhomspa} This subsection has two main goals: we give a general criterion for the quotient of an action Lie algebroid to be integrable based on Theorem \ref{liequo4} and the construction of \cite{daz}; then we apply this criterion to the Dirac homogeneous spaces of \cite{robdir,dirhomspa} and obtain a result that generalizes the integration of Poisson homogeneous spaces in \cite{intpoihom}. 
\subsubsection{Integrability of quotients of action Lie algebroids. } Let us consider an infinitesimal action of a Lie algebra $\mathfrak{l} $ on a manifold $M$ and the associated action Lie algebroid $\mathfrak{l} \times M$. Suppose that there is a Lie subalgebra $\mathfrak{k} \hookrightarrow \mathfrak{l} $ whose restricted action is induced by the action of a Lie group $K$. If $K$ acts freely and properly on $M$ and there is an $K$-action by automorphisms on $\mathfrak{l} $ whose infinitesimal counterpart is the adjoint representation, then Lemma \ref{liequo} implies that there is a unique Lie algebroid structure on the associated bundle $A=(\mathfrak{l}/\mathfrak{k} \times M)/K$ over $M/K$ such that the quotient map $q:M \rightarrow M/K$ satisfies that $q^!A$ is isomorphic to the action Lie algebroid $\mathfrak{l} \times M$. We shall see that the explicit integration of action Lie algebroids provided in \cite{daz} implies a simple criterion for the integrability of Lie algebroids of the form of $A$; this construction generalizes the quotients of action groupoids studied in \cite{blowei}. Then we shall see some applications to Dirac geometry of this fact.

\begin{defi}[\cite{daz}] Let $\mathfrak{l} $ be a Lie algebra acting on $M$ and let $L$ be a Lie group which integrates $\mathfrak{l} $. The associated \emph{Dazord model} is the quotient groupoid $\mathcal{D}_L:= \text{hol}(\mathcal{F})/L \rightrightarrows M$ where $\mathcal{F} $ is the foliation on $M \times L$ given by the distribution $D=\{(u_M,u^l)\in TM \times TL:u\in \mathfrak{l} \}$ (here we assume that the Lie algebra structure on $\mathfrak{l} $ is given by the left-invariant vector fields). \end{defi} 
Since the action of $L$ on $P \times L$ given by $a\cdot (p,b)=(p,ab)$ preserves $D$, it induces an action by automorphisms of $L$ on $\mathcal{G} (\mathcal{F} )$, the monodromy groupoid. Since the $L$-action preserves holonomy, it descends to an action on the holonomy groupoid $\text{hol}(\mathcal{F})$; this action is free and proper and hence the quotient $\text{hol}(\mathcal{F})/L$ is a Lie groupoid which integrates the action Lie algebroid $\mathfrak{l} \times M$.
\begin{rema} In the previous definition we can use the monodromy groupoid $\text{mon}(\mathcal{F})$ instead of $\text{hol}(\mathcal{F})$. The resulting quotient, that we shall also call Dazord model, $\widetilde{\mathcal{D}}_L :=\text{mon}(\mathcal{F})/L$ is then source-simply-connected. The advantage of $\text{hol}(\mathcal{F})$ is that it is better suited for our next application. \end{rema}  
	\begin{defi}[\cite{dirhomspa}] Let $\mathfrak{l} $ be a Lie algebra and let $\mathfrak{k} \subset \mathfrak{l} $ be a Lie subalgebra. Let $K$ be a Lie group with Lie algebra $\mathfrak{k}$ and such that there is an action of $K$ on $\mathfrak{l}$ by automorphisms whose infinitesimal counterpart is the adjoint action of $\mathfrak{k} $ on $\mathfrak{l} $. In this situation $(\mathfrak{l} ,K)$ is called a {\em Harish-Chandra pair}. {\em A morphism of Harish-Chandra pairs} $(\mathfrak{l} ,K)\rightarrow (\mathfrak{l}',K')$ is a Lie group morphism $F:K\rightarrow K'$ and a Lie algebra morphism $f:\mathfrak{l} \rightarrow \mathfrak{l}'$ such that $f$ commutes with the actions and $T_eF=f|_\mathfrak{k}$.  
\end{defi}
\begin{prop}\label{daztrick} Let $(\mathfrak{l} ,K)$ be a Harish-Chandra pair and let $\mathfrak{l} \times M$ be an action Lie algebroid. Suppose that the the $\mathfrak{l} $-action on $M$ restricted to $\mathfrak{k} $ coincides with the infinitesimal action corresponding to a free and proper action of $K$ on $M$. Let $K_\circ$ be the connected component of the unit. If the inclusion of Lie algebras $\mathfrak{k} \hookrightarrow \mathfrak{l}$ is integrable by a Lie group morphism $\psi:K_{\circ} \rightarrow L$, then the associated quotient Lie algebroid $(\mathfrak{l}/\mathfrak{k}  \times M)/K$ over $M/K$ is integrable. \end{prop}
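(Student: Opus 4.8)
The plan is to apply Theorem \ref{liequo4} to the quotient map $q:M\rightarrow M/K$. For this map the identification $q^!A\cong \mathfrak{l}\times M$ is exactly the content of Lemma \ref{liequo}, as recorded in the discussion preceding the Dazord model. Thus $A$ is integrable precisely when the action Lie algebroid $\mathfrak{l}\times M$ admits a $q$-admissible integration, i.e. when $\mathfrak{l}\times M$ is integrable and the inclusion $\ker Tq\hookrightarrow \mathfrak{l}\times M$ is integrated by a Lie groupoid morphism whose source is the submersion groupoid $M\times_{M/K}M\rightrightarrows M$. Since the $K$-action is free and proper, $q$ is a principal $K$-bundle, so this submersion groupoid is canonically isomorphic to the action groupoid $K\ltimes M\rightrightarrows M$; under this isomorphism $\ker Tq$ becomes $\mathfrak{k}\times M$ and the inclusion $\ker Tq\hookrightarrow \mathfrak{l}\times M$ becomes the inclusion of action algebroids induced by $\mathfrak{k}\hookrightarrow\mathfrak{l}$.

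First I would dispose of the integrability of $\mathfrak{l}\times M$ itself. Because $L$ integrates $\mathfrak{l}$, the Dazord model $\mathcal{D}_L=\text{hol}(\mathcal{F})/L$ is a Lie groupoid integrating $\mathfrak{l}\times M$; this is the role of the construction of \cite{daz} recalled above. It therefore remains only to build a Lie groupoid morphism $\Phi:K\ltimes M\rightarrow \mathcal{D}_L$ integrating the inclusion $\mathfrak{k}\times M\hookrightarrow \mathfrak{l}\times M$, after which Theorem \ref{liequo4} yields the conclusion at once.

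To construct $\Phi$ I would use $\psi$ together with the explicit leaves of $\mathcal{F}$. Given $k\in K_\circ$ and $p\in M$, choose a path $k(t)$ in $K_\circ$ from $e$ to $k$ and consider the curve $t\mapsto (k(t)\cdot p,\psi(k(t)))$ in $M\times L$. Since $T_e\psi$ is the inclusion $\mathfrak{k}\hookrightarrow\mathfrak{l}$ and the $\mathfrak{l}$-action restricts on $\mathfrak{k}$ to the infinitesimal $K$-action, one checks that this curve is everywhere tangent to the distribution $D$ defining $\mathcal{F}$, hence is a leafwise path starting at $(p,e)$; its holonomy class, taken modulo the $L$-action, is an arrow of $\mathcal{D}_L$ from $p$ to $k\cdot p$, which I would take as the definition of $\Phi(k,p)$. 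I would then verify that $\Phi$ is multiplicative, which reduces to a cocycle-type identity relating concatenation of the chosen paths to $\psi(k_1k_2)=\psi(k_1)\psi(k_2)$ together with the $L$-equivariance built into the quotient $\mathcal{D}_L$, and that $\text{Lie}(\Phi)$ is the desired inclusion, which is immediate from the infinitesimal description of the leafwise path.

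The main obstacle, and the precise point at which the hypotheses enter, is the well-definedness of $\Phi$ on the genuine action groupoid $K\ltimes M$ rather than merely on the source-simply-connected integration $\mathcal{G}(\mathfrak{k}\times M)$. The endpoint $(k\cdot p,\psi(k))$ is already independent of the chosen path because $\psi$ is defined on $K_\circ$ itself and not only on its universal cover; changing $k(t)$ by a loop in $K_\circ$ alters the leafwise path by a leafwise loop at $(p,e)$ whose endpoint in $L$ is trivial and whose holonomy must be shown to vanish. Here the use of the holonomy groupoid $\text{hol}(\mathcal{F})$ rather than the monodromy groupoid is exactly what kills the holonomy of these loops, so that $\Phi$ factors through $\pi_1(K_\circ)$ and descends to $K\ltimes M$; this is the feature for which $\text{hol}(\mathcal{F})$ was singled out as better suited. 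Equivalently, Lie's second theorem produces $\mathcal{G}(\mathfrak{k}\times M)\rightarrow\mathcal{D}_L$ for free, and the real content is that the existence of the group morphism $\psi:K_\circ\rightarrow L$ forces this morphism to vanish on the isotropy $\pi_1(K_\circ)$ and hence to factor through $K\ltimes M$. For non-connected $K$ one treats the remaining components using the $K$-action by automorphisms of $\mathfrak{l}$ supplied by the Harish-Chandra pair.
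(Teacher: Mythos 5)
Your overall route is the same as the paper's: apply Theorem \ref{liequo4} to $q\colon M\to M/K$, integrate $\mathfrak{l}\times M$ by the Dazord model $\mathcal{D}_L=\text{hol}(\mathcal{F})/L$, and produce a Lie groupoid morphism from the action groupoid $K_\circ\ltimes M$ (equivalently the submersion groupoid of $q$) integrating $\mathfrak{k}\times M\hookrightarrow\mathfrak{l}\times M$ by sending $(k,p)$ to the class of the leafwise path $t\mapsto(k(t)\cdot p,\psi(k(t)))$. You also correctly isolate the crux: well-definedness, i.e.\ that changing $k(t)$ by a loop in $K_\circ$ does not change the class in $\text{hol}(\mathcal{F})/L$. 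But your resolution of that crux is circular. Passing to the holonomy groupoid does not ``kill'' holonomy: a leafwise loop is a unit in $\text{hol}(\mathcal{F})$ \emph{if and only if} its holonomy is trivial. So the statement you need --- that the leafwise loops $t\mapsto(a(t)\cdot p,\psi(a(t)))$, for $a$ a loop in $K_\circ$ at $e$, have trivial $\mathcal{F}$-holonomy --- is exactly what remains to be proven, and nothing in your argument addresses it. (What choosing $\text{hol}$ over $\text{mon}$ buys is only that trivial holonomy suffices; it spares you having to contract the loop inside its leaf, but it does not produce the triviality.) The same circularity appears in your reformulation via Lie's second theorem: asserting that the existence of $\psi$ ``forces'' the morphism $\mathcal{G}(\mathfrak{k}\times M)\to\mathcal{D}_L$ to vanish on the isotropy $\pi_1(K_\circ)$ is a restatement of the claim, not a proof; $\psi$ only guarantees that the endpoint in $L$ is path-independent.

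This missing step is in fact the technical heart of the paper's proof. There, the loops in question are observed to be tangent to the subfoliation $\mathcal{F}'\subset\mathcal{F}$ spanned by $(u_M,u^l)$, $u\in\mathfrak{k}$, which is the orbit foliation of the \emph{free} action of the connected group $K$ on $M\times L$ given by $(p,a)\cdot b=(pb,a\psi(b))$; such foliations have trivial holonomy, so these loops have trivial $\mathcal{F}'$-holonomy. Then a separate lemma --- proved using the Frobenius normal form for nested foliations --- shows that a loop with trivial holonomy with respect to a subfoliation of $\mathcal{F}$ also has trivial holonomy with respect to $\mathcal{F}$. (The paper's route through $\text{hol}(\widetilde{\mathcal{F}})/K$ additionally invokes transverse parallelizability of the coset foliation of $\psi(K)\subset L$ to compare holonomies computed in $M\times\psi(K)$ and in $M\times L$.) Neither of these two ingredients is automatic, and both are absent from your proposal. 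A secondary issue: for disconnected $K$ you cannot ``treat the remaining components'' by extending $\Phi$, since $\psi$ is defined only on $K_\circ$ and there are no paths from $e$ into other components; the paper instead proves the statement for the principal $K_\circ$-bundle $M\to M/K_\circ$ and then quotients the resulting Lie algebroid by the residual action of the discrete group $K/K_\circ$ by automorphisms.
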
  
	The proof is based on observing that, in this situation, we can promote $\psi$ to a Lie groupoid morphism $(M \times K_\circ \rightrightarrows M) \rightarrow (\mathcal{D}_L \rightrightarrows M )$ which integrates the inclusion of Lie algebroids $ \mathfrak{k} \times M \hookrightarrow \mathfrak{l} \times M$. Then (a particular case of) Theorem \ref{liequo4} implies the result.
 		
\begin{proof}[Proof of Proposition \ref{daztrick}] It is enough to prove the proposition under the assumption that $K$ is connected. If $K$ is not connected, the following discussion proves the statement for the principal bundle $K_\circ \hookrightarrow M \rightarrow M/K_\circ$. But then the discrete group $K/K_\circ$ acts by automorphisms on $(\mathfrak{l} /\mathfrak{k} \times M)/K_\circ$ and hence the quotient $(\mathfrak{l} /\mathfrak{k} \times M)/K$ is also integrable. 

Suppose that $K$ is connected and let $\psi:K \rightarrow L$ be a Lie group morphism such that $\text{Lie} (\psi)$ is the inclusion $\mathfrak{k} \hookrightarrow \mathfrak{l} $. Let us apply the Dazord construction to the restricted infinitesimal action of $\mathfrak{k} $ on $M$. We have a foliation $\widetilde{  \mathcal{F}}$ on $M \times \psi(K)$ given by the distribution $\{(u_M,u^l):u\in \mathfrak{k} \}$ and then $\text{hol}(\widetilde{  \mathcal{F}})/K$ integrates the Lie algebroid $\mathfrak{k} \times M$. We shall see that there is a morphism of Lie groupoids $\phi:\text{hol}(\widetilde{  \mathcal{F}})/K \rightarrow \text{hol}(\mathcal{F})/L$ which integrates the inclusion of Lie algebroids $\mathfrak{k} \times M \hookrightarrow \mathfrak{l} \times M$ and then we will show that $\text{hol}(\widetilde{  \mathcal{F}})/K$ is isomorphic to the action groupoid $M \times K \rightrightarrows M$. 

Consider the subbundle $D'$ of $D=\{(u_M,u^l)\in TM \times TL:u\in \mathfrak{l} \}$ given by the infinitesimal action $(u_M,u^l)$ for every $u\in\mathfrak{k}$. The foliation $\mathcal{F}'$ induced by $D'$ restricts to $\widetilde{  \mathcal{F}}$ on $M \times \psi(K) \subset M \times L$ so this inclusion induces a Lie groupoid morphism $\iota:\mathcal{G} (\mathcal{F}') \rightarrow \text{hol}(\mathcal{F})$. We claim that $\iota(a)$ is a unit for every loop $a$ with trivial holonomy. Since the foliation induced by the left (or right) cosets of $\psi(K)$ in $L$ is transversely parallelizable \cite{moeint}, a loop with trivial holonomy with respect to $\widetilde{  \mathcal{F}}$ has also trivial holonomy with respect to $  \mathcal{F}'$. Moreover, a loop with trivial holonomy with respect to $\mathcal{F}' $ has also trivial holonomy with respect to $\mathcal{F}$, see the next lemma. From these observations we deduce that $\iota$ induces a morphism $\text{hol}(\widetilde{ \mathcal{F}}) \rightarrow \text{hol}(\mathcal{F})$ which is clearly $K$-equivariant and hence it induces a Lie groupoid morphism $\phi: \text{hol}(\widetilde{ \mathcal{F}})/K \rightarrow \text{hol}(\mathcal{F})/L$. 
	
		Now $\text{hol}(\widetilde{\mathcal{F}})/K$ is isomorphic to the action groupoid $M \times K \rightrightarrows M$. In fact, any path tangent to $\widetilde{ \mathcal{F}} $ is of the form
		\[ t \mapsto (p a(t),\psi(a(t))) \] where $a$ is a path in $K$. The foliation $\mathcal{F}'$ is defined by the free action of a connected Lie group so its holonomy is trivial. Then $\text{hol}(\mathcal{F}')$ is identified with the action groupoid $(M \times \psi(K))\times K \rightrightarrows M \times \psi(K)$ corresponding to the right action $(p,a)\cdot b=(pb,a\psi(b))$. As a consequence, the quotient of $(M \times \psi(K))\times K \rightrightarrows M \times \psi(K)$ by the left action of $\psi(K)$ defined by $a\cdot (p,b,c)=(p,ab,c)$ is isomorphic to the action groupoid $M \times K \rightrightarrows M$.  
 
	Since we have a morphism $\phi: M \times K \rightarrow \text{hol}(\mathcal{F} )/L$ which integrates $\mathfrak{k} \times M \hookrightarrow \mathfrak{l} \times M$, Theorem \ref{liequo4} implies the result. \end{proof}
In order to complete the previous proof, let us prove the following. 
\begin{lem} A loop with trivial holonomy with respect to a subfoliation of $\mathcal{F} $ has also trivial holonomy with respect to $\mathcal{F} $. \end{lem}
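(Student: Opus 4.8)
The plan is to realize the $\mathcal{F}$-holonomy of the loop as a quotient of its holonomy with respect to the subfoliation, call it $\mathcal{F}'$, and then observe that a trivial holonomy upstairs forces a trivial holonomy downstairs. First I would fix the common basepoint $p=\gamma(0)=\gamma(1)$ and choose transversals adapted to the inclusion $T\mathcal{F}'\subset T\mathcal{F}$. Since $\mathcal{F}'$ is a subfoliation of $\mathcal{F}$, I can select a transversal $T'$ to $\mathcal{F}'$ at $p$ together with a transversal $T\subset T'$ to $\mathcal{F}$ at $p$ in such a way that the trace of $\mathcal{F}$ on $T'$ is a foliation $\mathcal{F}''$ of $T'$, of dimension $\dim\mathcal{F}-\dim\mathcal{F}'$, admitting $T$ as a transversal. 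This produces a submersion germ $\rho\colon (T',p)\to (T,p)$ whose fibers are the plaques of $\mathcal{F}''$ and which records, for each $\mathcal{F}'$-leaf near $p$, the $\mathcal{F}$-leaf containing it.

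The key observation is that any path contained in a leaf of $\mathcal{F}'$ is automatically contained in a leaf of $\mathcal{F}$, so every holonomy lift of $\gamma$ with respect to $\mathcal{F}'$ is simultaneously a holonomy lift with respect to $\mathcal{F}$. I would use this to show that the holonomy germs $h'\colon (T',p)\to(T',p)$ of $\gamma$ relative to $\mathcal{F}'$ and $h\colon(T,p)\to(T,p)$ relative to $\mathcal{F}$ fit into a commuting square $\rho\circ h'=h\circ\rho$. Indeed, for $q\in T'$ the $\mathcal{F}'$-transport carries the $\mathcal{F}'$-leaf through $q$, hence the $\mathcal{F}$-leaf $\rho(q)$ containing it, to the $\mathcal{F}'$-leaf through $h'(q)$, which lies in the $\mathcal{F}$-leaf $\rho(h'(q))$; since the $\mathcal{F}$-transport of $\rho(q)$ is by definition $h(\rho(q))$, this yields $\rho(h'(q))=h(\rho(q))$.

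Finally, if $\gamma$ has trivial holonomy with respect to $\mathcal{F}'$, then $h'$ is the identity germ and the commuting square gives $h\circ\rho=\rho$. As $\rho$ is a submersion germ, it is surjective onto a neighborhood of $p$ in $T$, and therefore $h$ must be the identity germ; that is, $\gamma$ has trivial holonomy with respect to $\mathcal{F}$. I expect the main technical obstacle to be the careful choice of nested foliated charts making $\rho$ well defined as a germ and the verification that $\mathcal{F}'$-holonomy transport descends along $\rho$ to $\mathcal{F}$-holonomy transport; once the compatible transversals are in place, this reduces to bookkeeping about plaque-to-plaque transport in a common foliated atlas.
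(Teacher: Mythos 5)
Your proof is correct and takes essentially the same route as the paper: your nested transversals $T\subset T'$, the trace foliation $\mathcal{F}''$, and the submersion germ $\rho$ are exactly the invariant repackaging of the paper's coordinates adapted simultaneously to both foliations (obtained there from the Frobenius theorem), and your intertwining relation $\rho\circ h'=h\circ\rho$ is the coordinate-free form of the paper's observation that the $\mathcal{F}$-holonomy germ is obtained from the $\mathcal{F}'$-holonomy germ by restriction/projection along the adapted charts covering the loop. The final step, deducing $h=\mathrm{id}$ from surjectivity of $\rho$, matches the paper's conclusion.
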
 
	\begin{proof} The (usual) proof of Frobenius theorem as in \cite{spidif} shows that if $\mathcal{F}' $ is a subfoliation of rank $p$ of $\mathcal{F} $ which has rank $q$ on a manifold $M$ of dimension $p+q+r$, we can find local coordinates $\{x^a\}_a$ around every point in $M$ such that every leaf of $\mathcal{F}' $ in that chart is given by $x^a=\text{ constant}$ for $a>p$ and the leaves of $\mathcal{F}$ are given by $x^a =\text{ constant}$ for $a>p+q$. So in a chart like this, the holonomy germ along a path $t \mapsto (a(t),x,y)\in \mathbb{R}^p\times \mathbb{R}^q \times \mathbb{R}^r $ tangent to $\mathcal{F} $ is given by the restriction to $\{(a(0),x)\}\times \mathbb{R}^r\subset \{a(0)\} \times \mathbb{R}^{q+r} $ of the holonomy germ with respect to the same path. So choosing coordinates of this kind on each open set of some finite cover of a loop $a$ tangent to $\mathcal{F}' $ with trivial holonomy we see that the holonomy of $a$ with respect to $\mathcal{F}$ is trivial too. \end{proof}

\subsubsection{Integration of Dirac homogeneous spaces.} We start by recalling some concepts in order to define the Dirac homogeneous spaces.
\begin{defi}[\cite{liuweixu}] A {\em Courant algebroid} consists of a vector bundle $E\rightarrow M$, a tensor $\langle\,,\, \rangle\in \Gamma ( E^* \otimes E^*)$ which induces a pointwise nondegenerate symmetric bilinear form that we call the {\em metric}, a bilinear bracket $\llbracket\,,\, \rrbracket:\Gamma(E)\times \Gamma(E)\rightarrow \Gamma(E)$ called the {\em Courant bracket} and a vector bundle morphism $\mathtt{a}:E\rightarrow TM$ called the {\em anchor} such that the following identities hold:
\begin{align*} &\mathtt{a}(X)\langle Y,Z \rangle =\langle \llbracket X,Y\rrbracket,Z \rangle + \langle Y, \llbracket X,Z\rrbracket \rangle, \\
& \llbracket X,\llbracket Y,Z\rrbracket\rrbracket=\llbracket\llbracket X,Y],Z\rrbracket+\llbracket Y,\llbracket X,Z\rrbracket\rrbracket,  \\
& \frac{1}{2}\mathtt{a}^*d \langle X,X \rangle = \llbracket X,X\rrbracket^\flat; \end{align*}
	for all $X,Y,Z \in \Gamma(E)$, where ${}^\flat:E\rightarrow E^*$ is the isomorphism given by the metric. \end{defi} 

\begin{defi}[\cite{liuweixu}] A subbundle $F\subset E$ of a vector bundle $E$ endowed with a metric $\langle\,,\, \rangle\in \Gamma (S^2 (E^*) )$ is called {\em isotropic} if $\langle\,,\, \rangle|_F=0$; a subbundle of $E$ with the property $E=E^\perp$ is called {\em Lagrangian}. A {\em Dirac structure} in a Courant algebroid $E$ is a Lagrangian subbundle $L\subset E$ which is involutive with respect to the restricted Courant bracket. If $L$ is a Dirac structure inside the Courant algebroid $E$, then $(E,L)$ is called a \emph{Manin pair}.
\end{defi} 
\begin{rema}[\cite{uchcou}] It is a consequence of the axioms for a Courant algebroid $(E,\langle\,,\, \rangle, \llbracket \,,\, \rrbracket, \mathtt{a} )$ that the Leibniz rule holds and $\mathtt{a}$ preserves brackets:
	\begin{align*} & \llbracket X,fY\rrbracket=f \llbracket X,Y\rrbracket+\left(\mathcal{L}_{\mathtt{a}(X)}(f)\right)Y, 
& \mathtt{a}\left(\llbracket X,Y\rrbracket\right)=[\mathtt{a}(X),\mathtt{a}(Y)]; \end{align*} 
for all $X,Y \in \Gamma(E)$ and all $f\in C^\infty(M)$. \end{rema}	
\begin{exa} A Courant algebroid over a point is a Lie algebra with an Ad-invariant metric. \end{exa}
\begin{exa} We have that $\mathbb{T}M$ endowed with the Courant-Dorfman bracket and its canonical pairing is a Courant algebroid called \emph{the canonical Courant algebroid}, see \S\ref{subsec:dirpre}. \end{exa}
\begin{exa}[\cite{liuweixu,roycou}] If $(A,\delta,\chi)$ is a Lie quasi-bialgebroid, then the bundle $A\oplus A^*$ inherits a unique Courant algebroid structure such that $A$ sits inside it as a Dirac structure, the metric is the canonical pairing and the Courant bracket and anchor restricted to $A^*$ induce the differential $\delta$. \end{exa} 
The general notion of morphism for Courant algebroids and Manin pairs is the following. Let $E,F$ be Courant algebroids over $M,N$. We denote by $\overline{F} $ the Courant algebroid $F$ with the opposite inner product. Given a smooth map $f:M \rightarrow N$, take $\Gamma_f \subset M \times N $, the graph of $f$.
\begin{defi}[\cite{alexu,buriglsev}] A {\em Courant morphism} between $E$ and $F$ over $f$ is a Lagrangian subbundle $R\subset E \times \overline{F}  $ such that: (1) the anchors satisfy $\mathtt{a}_E \times \mathtt{a}_F(R)\subset T \Gamma_f$ and (2) if $u,v\in \Gamma (E \times F)$ restrict to sections of $R$, then so does their Courant bracket $\llbracket u,v\rrbracket$. Composition of Courant morphisms $R,S$ is defined as the pointwise  composition of relations $R\circ S$. A {\em morphism of Manin pairs} $(E,L)$, $(F,K)$ over a smooth map $f: M \rightarrow N$ is a Courant morphism $R$ between $E$ and $F$ such that the composition satisfies $R\circ L=K$ and $\ker R\cap L=0$. \end{defi}  
 
Now we will review the definition of Dirac-Lie groups which generalizes the definition of Poisson groups and Lie groups endowed with the Cartan-Dirac structure \cite{purspi}. 
\begin{defi}[\cite{liedir,dirhomspa}] A {\em Dirac-Lie group} is a Manin pair $(\mathbb{A},E)$ over a Lie group $H$ equipped with a Manin pair morphism $R_{\mathtt{m}} :(\mathbb{A},E)\times (\mathbb{A},E) \rightarrow (\mathbb{A},E)$ over the multiplication map $\mathtt{m}: H \times H \rightarrow H$ such that 
	\[ R_{\mathtt{m}}\circ (R_{\mathtt{m}} \times \text{id}_{\mathbb{A}})=R_{\mathtt{m}}\circ (\text{id}_{\mathbb{A}}   \times R_{\mathtt{m}}) \quad  
	R_{\mathtt{m}}\circ (\epsilon \times \text{id}_{\mathbb{A}} )=R_{\mathtt{m}}\circ ( \text{id}_{\mathbb{A}}  \times\epsilon)\]
	where $\epsilon$ is the inclusion of the trivial Manin pair over the unit of $H$ in $(\mathbb{A},E)$. A Dirac-Lie group is called {\em exact} if its underlying Courant algebroid is exact. \end{defi}
The main result of \cite{liedir} is that Dirac-Lie groups are classified by $H$-equivariant Dirac-Manin triples, that we now recall.  
\begin{defi}[\cite{liedir}] Let $\mathfrak{d}$ be a Lie algebra and let $B\in S^2(\mathfrak{d})$ be $\mathfrak{d}$-invariant. Let $\mathfrak{g} \subset \mathfrak{d} $ be a coisotropic Lie subalgebra, i.e. $B^\sharp(\mathfrak{g}^\circ)\subset \mathfrak{g} $, where $B^\sharp: \mathfrak{d}^* \rightarrow \mathfrak{d} $ is defined by $B^\sharp(\alpha )=B(\alpha , \cdot)$ and $\mathfrak{g}^\circ \subset \mathfrak{d}^*$ is the annihilator of $\mathfrak{g} $. Then $(\mathfrak{d} ,\mathfrak{g} )$ is called a {\em Dirac-Manin pair}. \end{defi} 
In the previous definition, if $B$ is nondegenerate and $\mathfrak{g} $ is Lagrangian, we recover the definition of a Manin pair over a point.

\begin{defi} Let $(\mathfrak{d} ,\mathfrak{g} )$ be a Dirac-Manin pair and let $\mathfrak{h}\subset \mathfrak{d} $ be a Lie subalgebra such that $\mathfrak{d} =\mathfrak{g} \oplus \mathfrak{h} $ as a vector space. Then $(\mathfrak{d} ,\mathfrak{g} ,\mathfrak{h} )_B$ is called a {\em Dirac-Manin triple}. If there is an action by automorphisms of $H$ on $\mathfrak{d}$ whose infinitesimal counterpart is the action of $\mathfrak{h}$ on $\mathfrak{d}$ by inner derivations, $(\mathfrak{d} ,\mathfrak{g} ,\mathfrak{h} )_B$ is called an {\em $H$-equivariant Dirac-Manin triple}. \end{defi}

\begin{exa}\label{exa:poigro} An $H$-equivariant Dirac-Manin triple $(\mathfrak{d} ,\mathfrak{g},\mathfrak{h})_B$ in which $B$ is nondegenerate and both $\mathfrak{g} $ and $\mathfrak{h} $ are Lagrangian is called an ($H$-equivariant) Manin triple \cite{driham}. This structure induces a Poisson group structure on $H$. The corresponding Dirac-Lie group is $(H,\mathbb{T} H, \text{graph}(\pi))$, where $\pi$ is the Poisson structure that makes $H$ into a Poisson group, see \cite{meipoi}. \end{exa}  
\begin{exa}\label{cardir} Let $\mathfrak{h} $ be a Lie algebra and let $B'\in S^2(\mathfrak{h} )$ be $\mathfrak{h}$-invariant. Then the direct sum $\mathfrak{d} =\mathfrak{h} \oplus \mathfrak{h} $ with $B\in S^2(\mathfrak{d} )$ the element which restricts to $B'$ on $\mathfrak{h} \oplus 0$ and $-B'$ on $0\oplus \mathfrak{h} $ has the diagonal $\mathfrak{g} :=\mathfrak{h}_\Delta $ as a coisotropic subalgebra, so $(\mathfrak{d} ,\mathfrak{g},\mathfrak{h})_B$ is a Dirac-Manin triple, where we identify $\mathfrak{h} $ with its image in $\mathfrak{d} $ under the inclusion $u\mapsto (u,0)$. When $B'$ is a nondegenerate quadratic form we are in the situation of the {\em Cartan-Dirac structure} \cite{purspi}. For any integration $H$ of $\mathfrak{h} $, we have an $H$-action by automorphisms on $(\mathfrak{d} ,\mathfrak{g},\mathfrak{h} )_B$ which makes it into an $H$-equivariant Dirac-Manin triple. The corresponding Dirac-Lie group is isomorphic to $(H,\mathbb{T}_\eta H,E_H)$, where $\eta$ is the Cartan 3-form on $H$ and $E_H$ is the Cartan-Dirac structure, see \cite{purspi,liedir}. \end{exa} 
\begin{defi}[\cite{dirhomspa}] A {\em Dirac action} of a Dirac-Lie group $\mathcal{H} = (H,\mathbb{A},E )$ on a Manin pair $\mathcal{M} =(M,\mathbb{B},F)$ is a Manin pair morphism $R_a:\mathcal{H} \times \mathcal{M} \rightarrow \mathcal{M} $ over an action map $a:H \times M \rightarrow M$ such that  
	\[ R_a\circ (R_{\mathtt{m}}  \times \text{id}_{\mathbb{B}})=R_a\circ (\text{id}_{\mathbb{A}} \times R_a) \quad  
	R_a\circ (\epsilon \times \text{id}_{\mathbb{B}})= \text{id}_{\mathbb{B}}\]
	where $\epsilon$ is the inclusion of the trivial Manin pair over the unit of $H$ in $(\mathbb{A},E)$.
\end{defi} 
Dirac-Lie group actions on homogeneous spaces were classified in \cite{dirhomspa} as follows. Let $(\mathfrak{d} ,\mathfrak{g} ,\mathfrak{h} )_B$ be an $H$-equivariant Dirac-Manin triple. Take the following data: 
\begin{enumerate}  \item a closed Lie subgroup $K \hookrightarrow H$ and a Manin pair $(\mathfrak{n} ,\mathfrak{l})$ with bilinear form $\gamma\in S^2(\mathfrak{n} )$ such that $\mathfrak{k} \subset \mathfrak{l} $ and its $\gamma$-orthogonal  subspace $\mathfrak{k}^\perp\subset \mathfrak{n}$ is a Lie subalgebra; \item a $K$-action by automorphisms on $(\mathfrak{n} ,\mathfrak{l} )_{\gamma}$ whose infinitesimal counterpart is the action by inner derivations of $\mathfrak{k}$ on $\mathfrak{n} $; \item a morphism $(f,F):(\mathfrak{n} ,K)\rightarrow (\mathfrak{d},H)$ of Harish-Chandra pairs such that $f (\gamma)=B$. \end{enumerate} 
Define the map $\rho:\mathfrak{d} \rightarrow \mathfrak{X}(H)$ as follows: $X\in \mathfrak{d}$ goes to the vector field $X_H$ given by 
\begin{align} a\mapsto Tr_a\left( p_{\mathfrak{h} }\Ad_aX \right) \label{equ:actcoualg} \end{align}
for all $a\in H$, where $p_{\mathfrak{h} }:\mathfrak{d} \rightarrow \mathfrak{h}$ is the projection along $\mathfrak{g} $. We have that $\rho$ is a Lie algebra morphism. The morphism $\rho\circ f$ defines an action of $\mathfrak{n} $ on $H$ with coisotropic stabilizers so it induces a Manin pair structure on $(H \times \mathfrak{n} ,H \times \mathfrak{l} )$ \cite{coupoi}. Coisotropic reduction as in \cite{coupoi} of $(H \times \mathfrak{k}^\perp,H \times \mathfrak{l}) $ determines a Manin pair $(\mathbb{E} ,A)$ over $H/K$ which admits a Dirac-Lie group action of the Dirac-Lie group associated to $(\mathfrak{d} ,\mathfrak{g} ,\mathfrak{h} )_B$. In particular, $A$ is isomorphic to a quotient Lie algebroid $(\mathfrak{l}/\mathfrak{k} \times H)/K$ as in the beginning of the section. One of the main results in \cite{dirhomspa} states that all the Manin pairs with such a property are of this form. So we can call the items in 1-3, the {\em classifying data} of the associated Dirac action. Let $(\mathbb{E},A)$ be a Manin pair over $H/K$ admitting a Dirac action of a Dirac-Lie group $(H, \mathbb{A},E)$. Suppose that in the classifying data 1-3 corresponding to $(\mathbb{E},A )$ we have that $f|_{\mathfrak{l} }:\mathfrak{l} \rightarrow \mathfrak{d}$ is injective and $\mathfrak{k} =f(\mathfrak{l} )\cap \mathfrak{h} $. Then $(\mathbb{E},A)$ is a {\em Dirac homogeneous space} in the sense of \cite{robdir}, see \cite[\S 5.3]{dirhomspa}.
 
\begin{thm}\label{dirhomint} The Dirac homogeneous spaces are integrable. \end{thm}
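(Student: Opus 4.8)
The plan is to deduce the statement directly from Proposition~\ref{daztrick}. Recall from the classification recalled above that the Lie algebroid $A$ underlying a Dirac homogeneous space $(\mathbb{E},A)$ over $H/K$ is isomorphic to a quotient $(\mathfrak{l}/\mathfrak{k}\times H)/K$, where $\mathfrak{l}\times H$ is the action Lie algebroid obtained by restricting to $\mathfrak{l}\subset\mathfrak{n}$ the $\mathfrak{n}$-action $\rho\circ f$ on $H$ from \eqref{equ:actcoualg}, and $K$ acts on $H$ as the closed subgroup. This is precisely the setting of Proposition~\ref{daztrick} with $M=H$, so it suffices to verify its three hypotheses: that $(\mathfrak{l},K)$ is a Harish-Chandra pair, that the restricted $\mathfrak{k}$-action integrates to a free and proper $K$-action on $H$, and that $\mathfrak{k}\hookrightarrow\mathfrak{l}$ integrates to a Lie group morphism $\psi:K_\circ\to L$.

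The first two hypotheses are essentially bookkeeping. First I would restrict the $K$-action by automorphisms on $(\mathfrak{n},\mathfrak{l})_\gamma$ of classifying datum~(2) to the Lagrangian $\mathfrak{l}$; since its infinitesimal counterpart is the inner action of $\mathfrak{k}$, the pair $(\mathfrak{l},K)$ is Harish-Chandra. Next I would identify the restricted action explicitly: because $F:K\hookrightarrow H$ is the inclusion, $f|_{\mathfrak{k}}$ is the inclusion $\mathfrak{k}\hookrightarrow\mathfrak{h}\subset\mathfrak{d}$, and a direct computation with \eqref{equ:actcoualg} shows that for $u\in\mathfrak{h}$ the vector field $\rho(u)$ is the left-invariant field $u^{l}$ on $H$ (the projection $p_{\mathfrak{h}}$ acts as the identity on $\mathfrak{h}$, which is $\Ad_H$-invariant). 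Hence $\rho\circ f|_{\mathfrak{k}}$ generates precisely the free and proper right-translation action of the closed subgroup $K$, whose orbit space is $H/K$. The Dirac homogeneous space conditions that $f|_{\mathfrak{l}}$ be injective and $\mathfrak{k}=f(\mathfrak{l})\cap\mathfrak{h}$ guarantee that $\mathfrak{l}\cong f(\mathfrak{l})$ and that $\mathfrak{k}$ is exactly the stabilizer subalgebra, so that $(\mathfrak{l}/\mathfrak{k}\times H)/K$ is the expected quotient.

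The main work is the third hypothesis, the existence of $\psi:K_\circ\to L$. Here I would exploit the $H$-equivariance of $(\mathfrak{d},\mathfrak{g},\mathfrak{h})_B$. Let $D_0$ be the simply connected integration of $\mathfrak{d}$; the $H$-action by automorphisms lifts to $D_0$, and inside $\mathrm{Lie}(D_0\rtimes H)=\mathfrak{d}\rtimes\mathfrak{h}$ the subspace
\[ \mathfrak{j}=\{(-\iota u,u):u\in\mathfrak{h}\}\subset \mathfrak{d}\rtimes\mathfrak{h}, \]
with $\iota:\mathfrak{h}\hookrightarrow\mathfrak{d}$, is an ideal, and the morphism $(X,u)\mapsto X+\iota u$ realizes $\mathfrak{d}$ as the quotient $(\mathfrak{d}\rtimes\mathfrak{h})/\mathfrak{j}$. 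Integrating $\mathfrak{j}$ to a normal subgroup $J$ and setting $D:=(D_0\rtimes H)/J$ would produce a Lie group integrating $\mathfrak{d}$ together with a morphism $H\to D$ integrating $\iota$ and inducing the given $H$-action by $\Ad$. Taking $L\subset D$ to be the connected subgroup with Lie algebra $f(\mathfrak{l})\cong\mathfrak{l}$, the inclusion $\mathfrak{k}\subset f(\mathfrak{l})$ forces $K_\circ$ (mapped into $D$ through $H$) to land in $L$, so $\psi:=(H\to D)|_{K_\circ}$ integrates $\mathfrak{k}\hookrightarrow\mathfrak{l}$ as required. With all three hypotheses in hand, Proposition~\ref{daztrick} yields that $A$ is integrable, and, being the Lie algebroid of a Dirac structure, its source-simply-connected integration automatically carries the presymplectic structure induced by the Dirac structure.

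I expect the delicate point to be the construction of the ambient group $D$ with a compatible morphism $H\to D$: this amounts to integrating the Harish-Chandra pair $(\mathfrak{d},H)$, equivalently to the integral subgroup $J$ of the ideal $\mathfrak{j}$ being closed, so that $D$ is Hausdorff. I would handle the component group by first reducing to $K$ connected exactly as in the proof of Proposition~\ref{daztrick}, letting $K/K_\circ$ act by automorphisms on the quotient algebroid; the closedness of $J$ then recovers, in the Poisson-group situation of Example~\ref{exa:poigro}, the classical double group containing $H$. The remaining steps are routine once $\psi$ is available.
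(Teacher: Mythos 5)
Your overall strategy---deducing Theorem \ref{dirhomint} from Proposition \ref{daztrick}---is exactly the paper's, and your verification of the first two hypotheses is correct and matches the paper's implicit setup: restricting the $K$-action of classifying datum (2) to $\mathfrak{l}$ does give a Harish-Chandra pair $(\mathfrak{l},K)$, and your computation from \eqref{equ:actcoualg} that $\rho(u)=u^{l}$ for $u\in\mathfrak{h}$ (so that $\rho\circ f|_{\mathfrak{k}}$ generates the free and proper right $K$-action on $H$) is right. The genuine gap is in the step you yourself flag as delicate, and it is not repairable as stated: your construction $D:=(D_0\rtimes H)/J$ requires the integral subgroup $J$ of the ideal $\mathfrak{j}$ to be closed, and nothing in the hypotheses guarantees this. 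Concretely, writing $\phi:\widetilde{H}\to D_0$ for the morphism of simply connected groups integrating $\iota:\mathfrak{h}\hookrightarrow\mathfrak{d}$, one checks that $J$ is closed in $D_0\rtimes H$ if and only if $\phi(\pi_1(H))$ is closed in $D_0$ (indeed $J\cap(D_0\times\{e\})=\phi(\pi_1(H))\times\{e\}$), and this can fail for non-simply-connected $H$. Closedness of $J$ is precisely the condition that the Harish-Chandra pair $(\mathfrak{d},H)$ integrates to a morphism $H\to D$, i.e.\ that $H$ admits a ``Drinfeld double''-type integration of $\mathfrak{d}$; the paper itself treats this as a nontrivial extra hypothesis (see the definition of Drinfeld double and Theorem \ref{poihomspasymgro}, where additional assumptions are imposed for merely connected Poisson groups) and guarantees it only when the group is $1$-connected. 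So ``the remaining steps are routine once $\psi$ is available'' conceals an unproved, and in general false, existence claim.

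The paper sidesteps this by reordering the reductions: \emph{before} attempting to produce $\psi$, it replaces $H$ by its universal cover. Since the Dirac-Lie group structure lifts to $\widetilde{H}$ together with a canonical Dirac-Lie group morphism over the covering $\widetilde{H}\to H$, and $H/K=\widetilde{H}/\widetilde{K}$ with $\widetilde{K}$ the preimage of $K$, one may assume $H$ is $1$-connected. Then Lie's second theorem yields a morphism $j:H\to D$ into the $1$-connected integration $D$ of $\mathfrak{d}$ with no closedness issue at all; taking $L\subset D$ to be the immersed connected subgroup with Lie algebra $f(\mathfrak{l})$ (using injectivity of $f|_{\mathfrak{l}}$), the restriction $\psi:=j|_{K_\circ}$ automatically lands in $L$ because $K_\circ$ is connected and $T_ej(\mathfrak{k})=f(\mathfrak{k})\subset f(\mathfrak{l})$. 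If you insert this covering reduction at the start, your semidirect-product construction becomes unnecessary and the rest of your argument coincides with the paper's proof.
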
 
	\begin{proof} It is enough to prove the result for Dirac homogeneous spaces over connected homogeneous spaces, see \cite[Thm. 4.4]{intpoihom} and its proof. Let $(\mathbb{E},A)$ be a Dirac homogeneous space over $H/K$, where $(H, \mathbb{A},E)$ is a connected Dirac-Lie group. Let $(\mathfrak{d} ,\mathfrak{g} ,\mathfrak{h} )_B$ be the $H$-equivariant triple associated to $(H, \mathbb{A},E)$. Since there is also a Dirac-Lie group structure over the 1-connected covering $\widetilde{H}$ of $H$ and a canonical Dirac-Lie group morphism over the covering map $\widetilde{H} \rightarrow H$, we can assume, without loss of generality, that $H$ is 1-connected. So there is a Lie group morphism $j:H \rightarrow D$ integrating the inclusion $\mathfrak{h} \hookrightarrow \mathfrak{d} $. Consider the classifying data 1-3 associated to $(\mathbb{E},A)$. Since $f|_:\mathfrak{l} \rightarrow \mathfrak{d} $ is injective, there is an immersed connected subgroup $L\subset D$ with Lie algebra $\mathfrak{l} $. Then $j(K_\circ)\subset L$, where $K_\circ\subset K$ is the connected component of the unit. So the inclusion $\mathfrak{k} \hookrightarrow \mathfrak{l} $ is integrable by $\psi:=j|_{K_\circ}:K_\circ \rightarrow L$. Therefore, Proposition \ref{daztrick} implies the result. \end{proof}
		This is a generalization of the main result of \cite{intpoihom} which concerns Poisson homogeneous spaces of Poisson groups. In fact, more generally, we have the following.
\begin{coro} The Dirac structures associated to Dirac actions of exact Dirac-Lie groups on exact Manin pairs over homogeneous spaces are integrable. \end{coro}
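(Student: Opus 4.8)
The plan is to deduce the statement directly from Theorem \ref{dirhomint}, by showing that the two exactness hypotheses force the classifying data 1--3 of such a Dirac action to satisfy the one injectivity property that was actually used in the proof of that theorem. Recall that the proof of Theorem \ref{dirhomint} reduced, via Proposition \ref{daztrick}, to integrating the inclusion $\mathfrak{k}\hookrightarrow\mathfrak{l}$ by a Lie group morphism $\psi:K_\circ\to L$, and that this morphism was produced solely from the injectivity of $f|_{\mathfrak{l}}:\mathfrak{l}\to\mathfrak{d}$: injectivity yields an immersed connected subgroup $L\subset D$ with Lie algebra $f(\mathfrak{l})\cong\mathfrak{l}$, whereupon one sets $\psi=j|_{K_\circ}$. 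The additional condition $\mathfrak{k}=f(\mathfrak{l})\cap\mathfrak{h}$ in the definition of a Dirac homogeneous space was not used. Thus it suffices to check that, in the exact setting, $f|_{\mathfrak{l}}$ is injective.

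Next I would translate the two exactness assumptions into dimension constraints on the infinitesimal data. The Courant algebroid $\mathbb{A}$ underlying the Dirac-Lie group has fibres modelled on $(\mathfrak{d},B)$; since the metric of a Courant algebroid is nondegenerate, $B$ is nondegenerate, and exactness of $\mathbb{A}$ amounts to the rank condition $\operatorname{rank}\mathbb{A}=2\dim H$, i.e. $\dim\mathfrak{d}=2\dim\mathfrak{h}$. On the other side, the Manin pair $(\mathbb{E},A)$ over $H/K$ is obtained by coisotropic reduction of $(H\times\mathfrak{k}^\perp,H\times\mathfrak{l})$, so its fibre is the reduction of $(\mathfrak{n},\gamma)$ by the isotropic subspace $\mathfrak{k}\subset\mathfrak{l}$, of dimension $\dim\mathfrak{n}-2\dim\mathfrak{k}$. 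Exactness of $\mathbb{E}$ then reads $\dim\mathfrak{n}-2\dim\mathfrak{k}=2\dim(H/K)=2(\dim\mathfrak{h}-\dim\mathfrak{k})$, that is $\dim\mathfrak{n}=2\dim\mathfrak{h}$. Combining the two identities yields the crucial equality $\dim\mathfrak{n}=\dim\mathfrak{d}$.

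Finally I would exploit the compatibility $f(\gamma)=B$ of the morphism of Harish-Chandra pairs. At the level of the induced maps this says $B^\sharp=f\circ\gamma^\sharp\circ f^*$, so nondegeneracy of $B$ forces $B^\sharp:\mathfrak{d}^*\to\mathfrak{d}$ to be an isomorphism, and hence $f:\mathfrak{n}\to\mathfrak{d}$ to be surjective. Together with $\dim\mathfrak{n}=\dim\mathfrak{d}$ this shows that $f$ is a Lie algebra isomorphism; in particular $f|_{\mathfrak{l}}$ is injective, and the argument of Theorem \ref{dirhomint} applies verbatim to produce an integration. The exact case recovers the Poisson homogeneous spaces of \cite{intpoihom} as the subcase $\mathbb{A}=\mathbb{T}H$, $\mathfrak{n}=\mathfrak{d}$, $f=\mathrm{id}$. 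The main obstacle is the middle step: making precise the dictionary between exactness of the two Courant algebroids and the dimension count for $\mathfrak{d}$ and $\mathfrak{n}$, and in particular identifying the fibre of the reduced Courant algebroid $\mathbb{E}$ and checking that $\mathfrak{k}$ is isotropic so that the coisotropic reduction lowers the rank by exactly $2\dim\mathfrak{k}$. Once this is in place, the remaining linear algebra and the appeal to Theorem \ref{dirhomint} are routine.
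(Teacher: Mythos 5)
Your reduction of the corollary to the injectivity of $f|_{\mathfrak{l}}$ is correct: the proof of Theorem \ref{dirhomint} really only uses that injectivity (to produce $L\subset D$ and $\psi=j|_{K_\circ}$), and not the condition $\mathfrak{k}=f(\mathfrak{l})\cap\mathfrak{h}$. Your computation on the $\mathbb{E}$ side is also sound: $\gamma$ is nondegenerate because $(\mathfrak{n},\mathfrak{l})$ is a Manin pair, $\mathbb{E}$ is a coisotropic reduction of the action Courant algebroid $H\times\mathfrak{n}$ of \cite{coupoi} with fibre $\mathfrak{k}^\perp/\mathfrak{k}$ (note $\mathfrak{k}\subset\mathfrak{l}$ is isotropic), so exactness of $\mathbb{E}$ gives $\dim\mathfrak{n}=2\dim\mathfrak{h}$; and the final linear algebra from $f(\gamma)=B$ is correct. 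The genuine gap is your very first step: you assert that the Courant algebroid $\mathbb{A}$ of the Dirac-Lie group ``has fibres modelled on $(\mathfrak{d},B)$'' and deduce, from nondegeneracy of Courant metrics, that $B$ is nondegenerate and that exactness of $\mathbb{A}$ reads $\dim\mathfrak{d}=2\dim\mathfrak{h}$. But in the classification of \cite{liedir} the element $B\in S^2(\mathfrak{d})$ of a Dirac-Manin triple is allowed to be degenerate (the paper is explicit that only when $B$ is nondegenerate and $\mathfrak{g}$ is Lagrangian does one recover a Manin pair over a point; Example \ref{cardir} with degenerate $B'$ is a case in point), and in the degenerate case $\mathbb{A}$ cannot be the action Courant algebroid $\mathfrak{d}\times H$ --- precisely because Courant metrics are nondegenerate. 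So your argument is circular: the identification of the fibre of $\mathbb{A}$ with $(\mathfrak{d},B)$ is only available once one already knows $B$ is nondegenerate, which is what you are trying to establish. The fact that exactness of $\mathbb{A}$ forces $B$ nondegenerate and $\mathfrak{g}$ Lagrangian (hence $\dim\mathfrak{d}=2\dim\mathfrak{h}$ and $\mathbb{A}\cong\mathfrak{d}\times H$) is a structure theorem about Dirac-Lie groups, not a formal consequence of the definitions.

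This is exactly the point where the paper takes a different and shorter route: it cites \cite[Proposition 5.10]{dirhomspa}, which states that for Dirac actions of exact Dirac-Lie groups on exact Manin pairs the classifying Harish-Chandra pair morphism $(f,F):(\mathfrak{n},K)\rightarrow(\mathfrak{d},H)$ is an isomorphism; the classifying data then reduces to a Lagrangian subalgebra of $\mathfrak{d}$, and Theorem \ref{dirhomint} (more precisely, its proof via Proposition \ref{daztrick}) applies. Your dimension-counting scheme can be repaired by replacing your first paragraph with a citation of the characterization of exact Dirac-Lie groups from \cite{liedir} or \cite{dirhomspa}, but at that point your argument essentially reproduces the citation on which the paper's proof rests; the missing step is not cosmetic, it is the actual content encoded in the exactness hypothesis on $\mathbb{A}$.
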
 
\begin{proof} In this situation, the classifying data 1-3 of the Dirac action correspond to a Harish-Chandra pair morphism which is actually an isomorphism, see \cite[Proposition 5.10]{dirhomspa}, so the associated data reduces to a Lagrangian subalgebra of $\mathfrak{d}$ and hence Theorem \ref{dirhomint} implies the result. \end{proof}
Poisson groups and the Cartan-Dirac structure provide examples of exact Dirac-Lie groups so this result already covers the situations more commonly studied in the literature. Dirac homogeneous spaces \cite{jotdir} corresponding to Dirac-Lie groups in the sense of \cite{ortmul} are also integrable, see \cite{intquopoi}. 

\subsection{Poisson homogeneous spaces of symplectic groupoids and Poisson groups}
 
\begin{defi}[\cite{weicoi}] A {\em Poisson groupoid} is a Lie groupoid $\mathcal{G}  \rightrightarrows M$ with a Poisson structure on $\mathcal{G}$ such that the graph of the multiplication map is a coisotropic submanifold\footnote{Let $(M,\pi)$ be a Poisson manifold. A submanifold $C$ of $M$ is coisotropic if $\pi^\sharp(T^\circ C)\subset TC$, where $T^\circ C$ is the annihilator of $TC$. } of $\mathcal{G}\times \mathcal{G}\times \overline{\mathcal{G}}$, where $\overline{\mathcal{G}}$ denotes $\mathcal{G}$ with the opposite Poisson structure. \end{defi}
\begin{exa} A Poisson groupoid over a point is a {\em Poisson group} \cite{driham}. \end{exa}
\begin{exa} A Poisson groupoid whose bracket is nondegenerate is a symplectic groupoid. \end{exa}
The infinitesimal description of Poisson groupoids is provided by Lie bialgebroids, see \cite{macxu}. If $(A,A^*)$ is a Lie bialgebroid over $M$, we have that the map $\pi^\sharp :=\mathtt{a}\circ \mathtt{a}_*^*:T^*M \rightarrow TM$ defines a Poisson structure $\pi$ on $M$, where $\mathtt{a}$ and $\mathtt{a}_*$ are the anchors of $A$ and $A^*$ respectively; if $(A,A^*)$ is integrable by a Poisson groupoid $\mathcal{G} \rightrightarrows M$, then $\pi$ is the unique Poisson structure on $M$ that makes $\mathtt{t}:\mathcal{G} \rightarrow M$ into a Poisson morphism. If $(A,A^*)$ is the tangent Lie bialgebroid of a symplectic groupoid $\mathcal{G} \rightrightarrows M$, then $A$ is isomorphic to the cotangent Lie algebroid $T^*M$ \S \ref{subsec:poi} and $A^*$ is isomorphic to $TM$.      
	
\begin{defi}\cite{liuweixu2} Let $\mathcal{G} \rightrightarrows M$ be a Poisson groupoid. A {\em Poisson action} of $\mathcal{G}$ on a Poisson manifold $P$ is a groupoid action $a:{\mathcal{G}} \times_M P \rightarrow P$ of $\mathcal{G}$ on $J:P \rightarrow M$ such that its graph is coisotropic in $\mathcal{G} \times P \times \overline{P} $. If there is a section $s:M \rightarrow P$ of $J$ such that $P=\mathcal{G}  \cdot s(M)$ for the induced action of ${\mathcal{G}}$, $P$ is called a {\em homogeneous space} of $\mathcal{G} $.
\end{defi}
Poisson groups are always integrable \cite{luphd} so they allow us to consider two kinds of Poisson homogeneous spaces:
\begin{itemize} \item Poisson manifolds endowed with a transitive Poisson action of a Poisson group as in the original treatment \cite{dripoi}.
\item Poisson manifolds endowed with a transitive Poisson action of some symplectic groupoid which integrates a Poisson group. \end{itemize}  

Let us explain the second situation in more familiar terms. The Lie functor establishes an equivalence of categories between the category of simply-connected Poisson groups and the category of Lie bialgebras \cite{driham,driqua}. Hence, to every Poisson group $G$ there is an associated 1-connected Poisson group $G^*$ with tangent Lie algebra $\mathfrak{g}^*$. A Poisson action of a Poisson group $G$ on a manifold $M$ is {\em Hamiltonian} if it is encoded infinitesimally by a Poisson morphism $J: M \rightarrow G^*$, see \cite{luphd}. This concept generalizes the classical moment maps with target the dual of the Lie algebra $\mathfrak{g}^*$. A {\em Hamiltonian Poisson action of $G$} induces an action of the source-simply-connected symplectic groupoid $\Sigma(G^*) \rightrightarrows G^*$ which integrates $G^*$; if such an action is transitive along the fibers of the moment map $J$, then $M$ is a Poisson homogeneous space of $\Sigma(G^*) \rightrightarrows G^*$.

There are several explicit examples and applications of the original Poisson homogeneous spaces of Poisson groups, see \cite{dripoi,intpoihom} and references therein, while there are few nontrivial examples of Hamiltonian Poisson actions and even fewer that come with a transitive symplectic groupoid action in the previous sense. 

In this subsection we shall see the following topics: (i) as a corollary of Theorem \ref{liequo4} we shall produce a general criterion for the Poisson homogeneous space of a symplectic groupoid to be integrable; (ii) based on the general infinitesimal classification of Poisson homogeneous spaces provided in \cite{liuweixu2}, for every Poisson homogeneous space of a (connected) Poisson group $G$ we shall construct a Poisson homogeneous space of some symplectic groupoid $\mathcal{G} \rightrightarrows G^*$ with the same underlying infinitesimal data; (iii) we shall prove that the Poisson homogeneous spaces constructed in the previous way are explicitly integrable thanks to our general criterion.  
\subsubsection{Integrability of Poisson homogeneous spaces of symplectic groupoids. } Let us recall the classification of Poisson homogeneous spaces for Poisson groupoids of \cite{liuweixu2}. For any Lie groupoid $\mathcal{G}  \rightrightarrows M$ with Lie algebroid $A$, there is a canonical Lie groupoid structure on $T^* \mathcal{G} $ over $A^*$ \cite{macgen}. If $\mathcal{G}  $ is a Poisson groupoid, the target map $\hat{\mathtt{t}}:T^*\mathcal{G} \rightarrow A^*$ is a Lie bialgebroid morphism \cite{macgen} so it induces a Courant morphism $R:\mathbb{T}  \mathcal{G} \rightarrow A\oplus A^*$ \cite{alexu}. Take a Dirac structure $L \subset A\oplus A^*$. Then its pullback $E:=L\circ R$ is 
\begin{align} E=\{X^r+\pi_\mathcal{G}^\sharp(\alpha )\oplus \alpha : X \oplus\hat{ \mathtt{t}} (\alpha) \in L\}.\label{pulldir} \end{align}  
If $L \cap (A\oplus 0)$ is of constant rank and there is a Lie subgroupoid $\mathcal{K}\subset \mathcal{G}$ whose Lie algebroid is $L \cap (A\oplus 0)$ and is suitably regular, then $E$ is reducible and its Poison quotient is identified with the quotient by left translations $\mathcal{G}/\mathcal{K}$. The main result of \cite{liuweixu2} states that all the Poisson homogeneous spaces of $G$ are of this form. 
\begin{exa} Taking $L=A\oplus 0$, the Poisson homogeneous space associated to $L$ is $\mathcal{G}/\mathcal{G}=M$ itself. The Poisson structure on $M$ is integrable if so is $\mathcal{G}$, see \cite{morla}. \end{exa}
If $\mathcal{G}  \rightrightarrows M$ is a symplectic groupoid, then $E$ can be described in more familiar terms thanks to the fact that the Courant algebroid emerging as the double of its Lie bialgebroid is isomorphic to $\mathbb{T}M$ by means of the map $e^\pi: \mathbb{T}M \rightarrow \mathbb{T}M$: $X \oplus \alpha \mapsto X+\pi(\alpha )\oplus \alpha $, where $\pi$ is the Poisson bracket on $M$ \cite{liuweixu,liuweixu2}.   
\begin{lem}\label{poihom1} Let $L\subset \mathbb{T} M$ be a Dirac structure on a Poisson manifold $(M,\pi)$. Suppose that $M$ is integrable by the symplectic groupoid $({\mathcal{G}},\omega )$. Then $E$ as in \eqref{pulldir} is isomorphic as a Lie algebroid to the backward image $\mathfrak{B}_\mathtt{t}( L)$. \end{lem}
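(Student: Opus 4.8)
The plan is to realise both $E$ and $\mathfrak{B}_{\mathtt{t}}(L)$ as Lagrangian subbundles of the standard Courant algebroid $\mathbb{T}\mathcal{G}$ and to relate them by an explicit Courant automorphism. Using $e^\pi$ to regard $L$ as the Dirac structure $\{(v+\pi^\sharp a)\oplus a:a\oplus v\in L\}\subset\mathbb{T}M$, its backward image along the target map (a surjective submersion) becomes
\[ \mathfrak{B}_{\mathtt{t}}(L)=\{\,Y\oplus \mathtt{t}^*a:\ T\mathtt{t}(Y)=v+\pi^\sharp a,\ a\oplus v\in L\,\}, \]
which is automatically a Dirac structure, hence a Lie algebroid. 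By \eqref{pulldir}, the elements of $E$ are $(X^r+\pi_\mathcal{G}^\sharp\alpha)\oplus\alpha$ with $X\oplus\hat{\mathtt{t}}(\alpha)\in L$.

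First I would take as candidate isomorphism the gauge transformation $\tau\colon Y\oplus\xi\mapsto Y\oplus(\xi-\omega^\flat Y)$ by the $2$-form $-\omega$. Because $\omega$ is closed, $\tau$ is an automorphism of the Courant algebroid $\mathbb{T}\mathcal{G}$; it therefore carries Dirac structures to Dirac structures and restricts to isomorphisms of the underlying Lie algebroids. Consequently it suffices to verify the equality of subbundles $\tau(\mathfrak{B}_{\mathtt{t}}(L))=E$, and no involutivity or bracket computation is needed. Writing $\alpha:=\mathtt{t}^*a-\omega^\flat Y$ for the covector produced by $\tau$ from $Y\oplus\mathtt{t}^*a$, the identities $\pi_\mathcal{G}^\sharp\omega^\flat=-\text{id}$ and $\pi_\mathcal{G}^\sharp(\mathtt{t}^*a)=-a^r$ (right-invariant vector fields as Hamiltonian vector fields of target-pulled-back one-forms) rearrange the vector part into $Y=a^r+\pi_\mathcal{G}^\sharp\alpha$, exactly the form appearing in \eqref{pulldir}.

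It then remains to show that the defining constraints correspond, i.e. that $\hat{\mathtt{t}}(\alpha)=v$ is equivalent to $T\mathtt{t}(Y)=v+\pi^\sharp a$. Substituting $\alpha=\mathtt{t}^*a-\omega^\flat Y$ and invoking the two further symplectic-groupoid identities $\hat{\mathtt{t}}\circ\omega^\flat=-T\mathtt{t}$ (the statement that $\omega^\flat$ intertwines the tangent and cotangent groupoid structures) and $\hat{\mathtt{t}}\circ\mathtt{t}^*=-\pi^\sharp$, the expression $\hat{\mathtt{t}}(\alpha)=\hat{\mathtt{t}}(\mathtt{t}^*a)-\hat{\mathtt{t}}(\omega^\flat Y)$ collapses to $v$ precisely under that constraint. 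This yields the inclusion $\tau(\mathfrak{B}_{\mathtt{t}}(L))\subseteq E$, and since $\tau$ is a bundle isomorphism and both subbundles are Lagrangian of rank $\dim\mathcal{G}$, the inclusion is an equality. Hence $\tau$ restricts to the desired Lie algebroid isomorphism $\mathfrak{B}_{\mathtt{t}}(L)\cong E$.

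I expect the main difficulty to be pinning down these four structural identities with mutually consistent signs: everything hinges on the compatibility of $\hat{\mathtt{t}}$, $\omega^\flat$, $T\mathtt{t}$ and $a\mapsto a^r$ under the symplectic-groupoid identifications $A\cong T^*M$ and $A^*\cong TM$, together with correctly tracking the $e^\pi$-twist relating the double of the Lie bialgebroid to the standard $\mathbb{T}M$. Once the conventions are fixed so that these identities hold, the remaining verification is the short computation above, the structural content being carried entirely by the fact that a gauge transformation by a closed form is a Courant automorphism.
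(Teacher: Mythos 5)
Your strategy is in fact the paper's own: treat the gauge transformation $e^{-\omega}$ as an automorphism of the Courant algebroid $\mathbb{T}\mathcal{G}$ (legitimate because $\omega$ is closed), so that no bracket computation is needed, and then match the gauge-transformed subbundle against \eqref{pulldir} using the multiplicative identities of the symplectic groupoid. The structure is sound, but the proof as written breaks down exactly at the point you flag as ``pinning down the signs,'' and this cannot be repaired by a choice of conventions, because the signs are already forced by the definitions under which \eqref{pulldir} is stated. In the paper's conventions one has $\pi_\mathcal{G}^\sharp=(\omega^\flat)^{-1}$, $\pi_\mathcal{G}^\sharp(\mathtt{t}^*a)=a^r$, $\hat{\mathtt{t}}=T\mathtt{t}\circ(\omega^\flat)^{-1}$, and hence $\hat{\mathtt{t}}\circ\mathtt{t}^*=+\pi^\sharp$: all four of your identities hold with the \emph{opposite} sign. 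Re-running your computation with the correct signs, an element $Y\oplus\mathtt{t}^*a\in\mathfrak{B}_{\mathtt{t}}(L)$ goes to $Y\oplus\alpha$ with $\alpha=\mathtt{t}^*a-\omega^\flat Y$, and now $\pi_\mathcal{G}^\sharp\alpha=a^r-Y$, i.e.\ $Y=a^r-\pi_\mathcal{G}^\sharp\alpha$, while $\hat{\mathtt{t}}(\alpha)=-v$. So the covector enters with the wrong sign relative to \eqref{pulldir}; the on-the-nose equality $\tau(\mathfrak{B}_{\mathtt{t}}(L))=E$ is false. What is true is $\tau(\mathfrak{B}_{\mathtt{t}}(L))=E^{op}$, equivalently $e^{-\omega}(E)=\mathfrak{B}_{\mathtt{t}}(L^{op})$, where $(\cdot)^{op}$ flips the sign of the covector component.

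This is precisely the one ingredient the paper's proof has and yours lacks: the observation that $L^{op}=\{X\oplus\alpha : X\oplus-\alpha\in L\}$ is canonically isomorphic to $L$ as a Lie algebroid (the flip preserves the projection to $TM$, which is the anchor, and intertwines the restricted Courant brackets), so that
\[ E\;\cong\;e^{-\omega}(E)\;=\;\mathfrak{B}_{\mathtt{t}}(L^{op})\;=\;\bigl(\mathfrak{B}_{\mathtt{t}}(L)\bigr)^{op}\;\cong\;\mathfrak{B}_{\mathtt{t}}(L). \]
Since the lemma only asserts an isomorphism of Lie algebroids, not an equality of subbundles, this one-line patch closes the argument, and everything else in your proposal (the use of a closed-form gauge transformation as a Courant automorphism, the reduction to a linear-algebra matching of subbundles, the equality-of-Lagrangian-subbundles dimension count) is exactly as in the paper. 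But as submitted, the step ``the defining constraints correspond'' rests on sign identities incompatible with the paper's definitions, so the claimed subbundle equality is a genuine error rather than a matter of convention, and the op-isomorphism needed to absorb it is missing.
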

	\begin{proof} The map $\omega^\flat:T \mathcal{G} \rightarrow T^*{\mathcal{G}}$ is an isomorphism of Lie groupoids and hence the target map of the cotangent groupoid, $\hat{ \mathtt{t}}:T^*{\mathcal{G}} \rightarrow TM$, is identified with $T\mathtt{t}\circ( \omega^\flat)^{-1}$. The Poisson groupoid structure on $\mathcal{G} $ is given by $\Pi:= \omega^{-1}$ which satisfies $\Pi^\sharp=(\omega^\flat)^{-1}$. Since $\Pi^\sharp(\mathtt{t}^*\theta)=\theta^r$ and $\mathtt{t}$ is a Poisson morphism we have that 
	\[ e^{-\omega}(E)= \{\Pi^\sharp(\alpha +\mathtt{t}^*\theta )\oplus-\mathtt{t}^*\theta :T \mathtt{t}\Pi^\sharp(\alpha )\oplus \theta \in e^\pi L \}=\mathfrak{B}_\mathtt{t}(L^{op});\] where $L^{op}=\{X\oplus \alpha |X\oplus -\alpha \in L\}$ which is isomorphic to $L$. Therefore, we have the result. \end{proof}
\begin{prop}\label{poihom2} Let $P=\mathcal{G} /\mathcal{K}$ be a Poisson homogeneous space of a symplectic groupoid $\mathcal{G}\rightrightarrows M$ and $L\subset \mathbb{T} M$ its associated Dirac structure, where $\text{Lie}(\mathcal{K})=L \cap T^*M$. Then $P$ is integrable if the inclusion of Lie algebroids $L\cap T^*M \hookrightarrow L$ is integrable by a Lie groupoid morphism with source $\mathcal{K}$.
\end{prop}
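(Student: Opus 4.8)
The plan is to apply Theorem \ref{pullquo} to the quotient submersion $q:\mathcal{G}\rightarrow P=\mathcal{G}/\mathcal{K}$. The first step is to identify the pullback Dirac structure that governs the integrability of $(P,\pi_P)$. By the classification recalled above, the Dirac structure $E$ from \eqref{pulldir} has null foliation given by the fibers of $q$ (the $\mathcal{K}$-orbits under left translation) and reduces to the Poisson structure $\pi_P$; since $q$ is a surjective submersion with $E\cap T\mathcal{G}=\ker Tq$, reduction and backward image are mutually inverse here, so $E=\mathfrak{B}_q(\text{graph}(\pi_P))$. Hence, by Theorem \ref{pullquo}, it is enough to equip $E$ with a $q$-admissible presymplectic integration.

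To build the integrating groupoid, let $\mathcal{G}_L$ denote the presymplectic groupoid integrating $L$ that is the target of the given morphism $\psi:\mathcal{K}\rightarrow \mathcal{G}_L$. By Lemma \ref{poihom1}, up to the gauge transformation by the closed symplectic form $\omega$ on $\mathcal{G}$, the Dirac structure $E$ equals the backward image $\mathfrak{B}_\mathtt{t}(L^{op})$ along the target $\mathtt{t}:\mathcal{G}\rightarrow M$, i.e. the pullback Lie algebroid $\mathtt{t}^!L^{op}$. Since $\mathtt{t}$ is a surjective submersion and $L^{op}\cong L$ is integrable, the pullback groupoid $G:=\mathtt{t}^!(\mathcal{G}_L)$ integrates $E$. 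I would then endow $G$ with the presymplectic form obtained by pulling back the multiplicative form of $\mathcal{G}_L$ along the projection $G\rightarrow \mathcal{G}_L$ and correcting it by the multiplicative term $\mathtt{t}_G^*\omega-\mathtt{s}_G^*\omega$ implementing the gauge transformation; by construction its infinitesimal counterpart is $E$, and a dimension count shows $\dim G=2\dim \mathcal{G}$ so that $(G,\omega_G)$ is genuinely presymplectic.

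Next I would produce the required morphism. Identifying the submersion groupoid $\mathcal{G}\times_P\mathcal{G}\rightrightarrows \mathcal{G}$ with the action groupoid $\mathcal{K}\ltimes \mathcal{G}$ for the left-translation action of $\mathcal{K}$ (two arrows have the same image in $P$ precisely when they differ by left multiplication by an element of $\mathcal{K}$), I define
\[ \Phi:\mathcal{K}\ltimes\mathcal{G}\rightarrow G,\qquad \Phi(k,g)=(kg,\psi(k),g). \]
Using that $\psi$ covers $\text{id}_M$ one checks that this lands in $G=\mathtt{t}^!\mathcal{G}_L$ and is a groupoid morphism, and differentiating, with $\text{Lie}(\psi)$ the inclusion $L\cap T^*M\hookrightarrow L$, shows that $\text{Lie}(\Phi)$ is the inclusion $\ker Tq\hookrightarrow E$ (an application of Lemma \ref{liequo} together with the pullback description of $E$).

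Finally, to see $\Phi^*\omega_G=0$, I note that $\text{Lie}(\Phi)$ maps $\ker Tq$ onto the purely tangent part $E\cap T\mathcal{G}$ of $E$, on which the IM $2$-form of $\omega_G$ vanishes; hence the multiplicative form $\Phi^*\omega_G$ has vanishing infinitesimal counterpart. Reducing to connected $q$-fibers (equivalently, to source-connected $\mathcal{K}$, exactly as in the reduction to connected homogeneous spaces used in the proof of Theorem \ref{dirhomint}) the submersion groupoid $\mathcal{K}\ltimes\mathcal{G}$ is source-connected, so \cite{burcab} forces $\Phi^*\omega_G=0$. Thus $(G,\omega_G)$ is a $q$-admissible presymplectic integration of $E$, and Theorem \ref{pullquo} yields that $P$ is integrable. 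The step I expect to be the main obstacle is the gauge-theoretic bookkeeping of the second paragraph: pinning down $L^{op}$ and the correct sign of the correction term $\mathtt{t}_G^*\omega-\mathtt{s}_G^*\omega$ so that $\omega_G$ integrates the Dirac structure $E$ on the nose, together with the infinitesimal verification that $\text{Lie}(\Phi)$ is precisely the inclusion. By comparison, the vanishing $\Phi^*\omega_G=0$ is soft, being forced by source-connectedness once the IM $2$-form is seen to vanish on $\ker Tq$.
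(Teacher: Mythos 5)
Your construction of the integrating data coincides with the paper's: you identify the submersion groupoid of $q:\mathcal{G}\rightarrow P$ with the action groupoid $\mathcal{K}\times_M\mathcal{G}\rightrightarrows \mathcal{G}$, you use Lemma \ref{poihom1} to identify $E$ with the pullback of $L$ along $\mathtt{t}_{\mathcal{G}}$, you integrate it by the pullback groupoid $\mathtt{t}^!_{\mathcal{G}}\mathcal{L}$, and your morphism $\Phi(k,g)=(kg,\psi(k),g)$ is exactly the paper's map \eqref{grmp}. The gap lies in how you conclude: you route the argument through Theorem \ref{pullquo}, whose hypothesis is a $q$-admissible \emph{presymplectic} integration, and to produce one you assume that the target of the given morphism $\psi$ is a presymplectic groupoid. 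The proposition only assumes a Lie groupoid morphism from $\mathcal{K}$ into \emph{some} Lie groupoid $\mathcal{L}$ integrating $L$, and such an integration need not admit any compatible presymplectic form: by the uniqueness of multiplicative forms with prescribed infinitesimal data on source-connected groupoids \cite{burcab}, a compatible form on a quotient of $\mathcal{G}(L)$ would have to descend from the canonical one, and this fails in general (for instance, quotients of $T^*\mathbb{T}^2\rightrightarrows \mathbb{T}^2$, the source-simply-connected integration of the zero Poisson structure, by a bundle of lattices spanned by non-closed one-forms: translation by a one-form $\alpha$ changes the canonical form by the pullback of $d\alpha$). Nor can you transfer $\psi$ to the presymplectic integration $\mathcal{G}(L)$, since Lie's second theorem would require $\mathcal{K}$ to be source-simply-connected, which is not assumed. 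The same issue affects your last step: the vanishing $\Phi^*\omega_G=0$ via \cite{burcab} needs the submersion groupoid, equivalently $\mathcal{K}$, to be source-connected --- again not among the hypotheses, and the ``reduction to the source-connected case'' you borrow from the proof of Theorem \ref{dirhomint} is a statement about homogeneous spaces of Dirac-Lie groups, not something established in the present generality.

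Both problems disappear, along with all of the gauge-correction bookkeeping you flag as the main obstacle, if you invoke Theorem \ref{liequo4} instead of Theorem \ref{pullquo}; this is what the paper does. Since integrability of a Poisson manifold is by definition integrability of its cotangent Lie algebroid, it suffices to apply the purely Lie-algebroid statement to $q:\mathcal{G}\rightarrow P$ and $A=T^*P$: your identification $E=\mathfrak{B}_q(\text{graph}(\pi_P))\cong q^!(T^*P)$, the integration $G=\mathtt{t}^!_{\mathcal{G}}\mathcal{L}$ of $E$ (which only needs the Lie algebroid isomorphism of Lemma \ref{poihom1}, with no two-form on $\mathcal{L}$ and no $L^{op}$ sign-chasing at the groupoid level), and your morphism $\Phi$ already constitute a $q$-admissible integration in the sense of Theorem \ref{liequo4}. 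That theorem requires no presymplectic form and no connectedness hypothesis, and it directly yields integrability of $T^*P$, hence of $P$. The presymplectic refinement of Theorem \ref{pullquo} is only needed if one wants the explicit symplectic form on the quotient groupoid, which the proposition does not claim.
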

  
\begin{proof} Suppose that the inclusion of Lie algebroids $L\cap T^*M \hookrightarrow L$ is integrable by a Lie groupoid morphism $\psi:\mathcal{K} \rightarrow \mathcal{L}$. Lemma \ref{poihom1} implies that $E\cong \mathtt{t}^!_{\mathcal{G} } L$ so $E$ is integrable by the pullback groupoid 
\[ \mathtt{t}^!_\mathcal{G} \mathcal{L} =\{(x,k,y)\in \mathcal{G} \times \mathcal{L} \times \mathcal{G} :\mathtt{t}_{\mathcal{G} }(x)=\mathtt{t}_\mathcal{L}(k),\mathtt{t}_{\mathcal{G} }(y)=\mathtt{s}_\mathcal{L}(k)\}. \]
Now $P$ is the orbit space of the action groupoid $\mathcal{X}: =\mathcal{K}\times_M \mathcal{G}  \rightrightarrows \mathcal{G}$ and we can define $\Psi: \mathcal{X} \rightarrow \mathtt{t}^!_\mathcal{G} \mathcal{L} $ as
	\begin{align}   (k,x)\mapsto (kx,\psi(k),x).\label{grmp} \end{align}  
		Since $\Psi$ integrates the inclusion $\text{Lie}(\mathcal{X} ) \hookrightarrow E$ and $\mathcal{X}$ is isomorphic to the submersion groupoid $ \mathcal{G} \times_P \mathcal{G} \rightrightarrows \mathcal{G} $, Theorem \ref{liequo4} implies that the Poisson structure on $P$ is integrable. \end{proof}

\begin{rema}\label{morequpoihomspa} In this situation, the pullback groupoid $\mathtt{t}^!_{\mathcal{G} } \mathcal{L} $ which is Morita equivalent to $\mathcal{L}$. On the other hand, the Poisson structure $\pi$ on $P$, the Poisson homogeneous space associated to $L$, is integrable by a quotient of $\mathtt{t}^!_{\mathcal{G} }\mathcal{L} $ as in Theorem \ref{liequo4} which is also Morita equivalent to $\mathtt{t}^!_{\mathcal{G} }\mathcal{L} $. Therefore, $L$ and the cotangent Lie algebroid $T^*P$ associated to $\pi$ are integrable by Morita equivalent Lie groupoids. \end{rema}
\subsubsection{Poisson homogeneous spaces of Poisson groups and their Hamiltonian counterparts. } Let $G$ be a Poisson group and let $\mathfrak{d} $ be the double of the tangent Lie bialgebra of $G$. We have an action Courant algebroid $\mathfrak{d} \times G$ given by the map $\rho:\mathfrak{d} \rightarrow \mathfrak{X}(G)$ defined by \eqref{equ:actcoualg} (putting $H=G$), see \cite{coupoi}. The right-invariant trivialization $u\oplus \theta\mapsto u^r+\pi(\theta^r)\oplus \theta^r$ of $\mathbb{T} G$ defines an isomorphism with the action Courant algebroid $\mathfrak{d} \times G$. In what follows we shall identify $\mathbb{T}G$ with $\mathfrak{d} \times G$ by means of this isomorphism. Let $ \mathfrak{l} \times G \hookrightarrow \mathfrak{d} \times G$ be the Dirac structure associated to a Lagrangian subalgebra $\mathfrak{l} \subset \mathfrak{d} $. The Lie algebroid bracket $[\,,\,]$ and the Courant bracket $\llbracket\,,\, \rrbracket$ on $\mathfrak{d} \times G$ are related as follows:
		\[ \llbracket u,v \rrbracket=[u,v]+\mathtt{a}^* \langle du,v \rangle, \] 
		where $u,v\in C^\infty(G,\mathfrak{d} )$, see \cite[Lemma 4.1]{coupoi}. So if $u$ and $v$ take values in a Lagrangian subalgebra, then we have $\llbracket u,v \rrbracket=[u,v]$. 
	 
\begin{defi}[\cite{intpoihom}] Let $G$ be a Poisson group. A {\em Drinfeld double} of $G$ is an integration $D$ of the double Lie algebra $\mathfrak{d} $ such that there is a Poisson group morphism $G \rightarrow D$ which integrates the inclusion of Lie algebras $\mathfrak{g} \hookrightarrow \mathfrak{d} $. \end{defi} If $G$ is 1-connected, then it automatically admits a Drinfeld double given by the 1-connected integration of $\mathfrak{d} $. Throughout this section we shall use the following notation. If there are group morphisms $G \rightarrow  D$, $G^* \rightarrow  D$ which integrate the inclusions of Lie algebras $\mathfrak{g} \hookrightarrow \mathfrak{d} $, $\mathfrak{g}^* \hookrightarrow \mathfrak{d} $, we denote the image of an element $x\in G$ in $D$ as $\overline{x}$. Let $G$ be a Poisson group which admits a Drinfeld double $D$ and let us denote by ${G^*} $ the image in $D$ of the 1-connected integration of $\mathfrak{g}^*$. 

Let us recall the integration of the Poisson structures on Poisson groups given in \cite{luwei2,luphd}. There is a symplectic groupoid which integrates the Poisson structure on $G$. We take 
		\begin{align} \mathcal{G} =\{(g,u,v,h)\in G \times {G^*} \times {G^* }\times G: \overline{g}{u}={v}\overline{h}\}\label{sympoigr}, \end{align} 
The source and target of this groupoid are the projections to $G$ and the multiplication is given by \eqref{comp1}.
	\begin{align}\label{comp1}  \mathtt{m}^G( (a,u,v,b),(b,u',v',c))=(a,uu',vv',c); \end{align}

 In fact, $\mathcal{G} $ is also an integration for the Poisson structure on $G^*$: its source and target maps are the projections to $G^*$ and the multiplication is analogous to \eqref{comp1}, see \cite{luphd}. 

Take a Lagrangian subalgebra $\mathfrak{l} \subset \mathfrak{d} $. The general classification of \cite{liuweixu2} allows us to construct four associated (not necessarily smooth) homogeneous spaces in the following way. Let us denote $\mathfrak{k}=\mathfrak{g}^* \cap \mathfrak{l} $, $\mathfrak{k}'=\mathfrak{g} \cap \mathfrak{l} $ and let $K \subset {G^*}$, $K'\subset G$ be the connected subgroups integrating $\mathfrak{k}, \mathfrak{k}'$ respectively. In this situation we can integrate $\mathfrak{l}\times G $ with the following Lie groupoid which is an adaptation of \eqref{sympoigr} and was used in \cite{intpoihom} to integrate the Poisson homogeneous spaces of Poisson groups:
		\begin{align}  \mathcal{L}= \{(g,x,u,h)\in G \times L \times { G^*} \times G: \overline{g}{x} ={u}\overline{h}\} \label{laggro}\end{align}  
			where $L\subset D$ is the connected subgroup which integrates $ \mathfrak{l}  \subset \mathfrak{d} $ and $a \mapsto \overline{a}$ denotes again the inclusion of the corresponding element in $D$. The structure maps of $\mathcal{L} $ are analogous to those of $\mathcal{G} $, for example, the multiplication is $\mathtt{m}((g,x,u,h),(h,y,v,k))=(g,xy,uv,k)$. Similarly, we have an integration $\mathcal{L}'$ of the Dirac structure $\mathfrak{l} \times G^*$. The Lie algebroid $\mathfrak{k} \times G$ is integrable by
	\begin{align} 	 \mathcal{K}= \{(g,x,u,h)\in G \times K \times {G^*} \times G: \overline{g}{x}={u}\overline{h}\} \rightrightarrows G \label{isogro}\end{align} 
		with structure maps as those of $\mathcal{L} $. We have that $\mathcal{K}$ is a subgroupoid of both ${\mathcal{G}}$ and $\mathcal{L} $. Analogously, $\mathfrak{k}' \times G^*$ can be integrated by a Lie subgroupoid $\mathcal{K}'\subset \mathcal{L}'$ defined in terms of $K'$. We can ask what are the conditions that ensure the smoothness of the following topological spaces (in which case they automatically become Poisson homogeneous spaces):
\[ G/K', \quad G^*/K, \quad \mathcal{G}/\mathcal{K}, \quad \mathcal{G} /\mathcal{K}'. \]       	
Notice that $\mathcal{G} /\mathcal{K} $	is a homogeneous space for $\mathcal{G} \rightrightarrows G$ while $\mathcal{G} /\mathcal{K}'$ is a homogeneous space for $\mathcal{G} \rightrightarrows G^*$.			
		\begin{prop}\label{bijpoihomspa} We have that ${\mathcal{G}} /\mathcal{K}$ is a manifold if and only if so is ${G^*}/K$. \end{prop}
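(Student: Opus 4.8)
The plan is to present both sides as orbit spaces of free actions and collapse the equivalence to a single condition, namely that $K$ be closed in $G^*$. On one side, $G^*/K$ is a smooth manifold if and only if the connected subgroup $K\subset G^*$ integrating $\mathfrak{k}=\mathfrak{g}^*\cap\mathfrak{l}$ is closed (equivalently embedded). On the other side, $\mathcal{G}/\mathcal{K}$ is by construction the orbit space of the free left-translation action of the subgroupoid $\mathcal{K}\rightrightarrows G$ of \eqref{isogro} on $\mathcal{G}$, cf. Proposition \ref{poihom2}. I would therefore aim to prove: the $\mathcal{K}$-action on $\mathcal{G}$ admits a smooth (Hausdorff) orbit space if and only if $K$ is closed in $G^*$. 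Two standard facts will bracket the argument: a free and proper action has a smooth orbit space with submersive quotient map, and conversely, if the orbit space is a smooth Hausdorff manifold then the orbit relation $R\subset\mathcal{G}\times\mathcal{G}$ is a closed embedded submanifold (Godement's theorem, used exactly as in Lemma \ref{liequo3}).

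First I would unwind the action. Using the multiplication \eqref{comp1} one finds that $k=(g',x,u',g)\in\mathcal{K}$ sends $(g,u,v,h)\in\mathcal{G}$ to $(g',xu,u'v,h)$, so the action is free. Two pieces of structure do all the work: the projection $p\colon\mathcal{G}\to G^*$, $(g,u,v,h)\mapsto u$, which is a source map of the groupoid $\mathcal{G}\rightrightarrows G^*$ (hence a submersion) and satisfies $p(k\cdot\gamma)=x\,p(\gamma)$, so that it intertwines the $\mathcal{K}$-action with the left-translation action of $K$ on $G^*$; and the embedding $\sigma\colon G^*\to\mathcal{G}$, $u\mapsto(e,u,u,e)$, which is a section of $p$ and is $K$-equivariant in the sense that $\sigma(xu)=(e,x,x,e)\cdot\sigma(u)$ with $(e,x,x,e)\in\mathcal{K}$.

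For the implication ``$K$ closed $\Rightarrow$ action proper'' I would run a direct compactness bookkeeping. Given a compact $C\subset\mathcal{G}$ and $k=(g',x,u',g)\in\mathcal{K}$ with $k\cdot C\cap C\neq\varnothing$, the equivariance $p(k\gamma)=x\,p(\gamma)$ forces $x\in p(C)\,p(C)^{-1}\cap K$, a compact set once $K$ is closed; the target map of $\mathcal{G}\rightrightarrows G$ forces $g,g'\in\mathtt{t}_{\mathcal{G}}(C)$; and then $u'=\overline{g'}\,x\,\overline{g}^{\,-1}$ (the defining relation of \eqref{isogro}) lies in a fixed compact set. Hence $\{k:k\cdot C\cap C\neq\varnothing\}$ is relatively compact, the action is proper, and $\mathcal{G}/\mathcal{K}$ is a manifold. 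For the converse I would use $\sigma$: if $\mathcal{G}/\mathcal{K}$ is a manifold then the orbit relation $R\subset\mathcal{G}\times\mathcal{G}$ is closed, and applying $p$ to an orbit shows $(\sigma\times\sigma)^{-1}(R)=\{(a,b): b\in Ka\}$; being the preimage of a closed set under the continuous map $\sigma\times\sigma$, this relation is closed in $G^*\times G^*$, and intersecting it with $\{e\}\times G^*$ exhibits $K$ as a closed subgroup of $G^*$. Thus $G^*/K$ is a manifold.

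The step I expect to be most delicate is topological rather than computational. A priori $K$ is only an immersed (connected) subgroup of $G^*$, and likewise $\mathcal{K}$ is only immersed in $\mathcal{G}$, so their intrinsic topologies may be strictly finer than the subspace topologies; the arguments above must be read with this in mind (for instance, closedness of $K$ is exactly what promotes it to an embedded, hence genuinely locally compact, subgroup, making $p(C)\,p(C)^{-1}\cap K$ compact, and it is what lets the preimage computation detect $K$ as a closed set). The remaining ingredients --- the explicit action formula, the equivariance of $p$, and the two invocations of the quotient-manifold/Godement dichotomy --- are routine, so once the point-set issues are handled carefully the equivalence follows.
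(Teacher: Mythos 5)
Your overall strategy coincides with the paper's: both arguments reduce the statement to the single condition that $K$ be closed in $G^*$, show that closedness makes the free left-translation action of $\mathcal{K}$ on $\mathcal{G}$ proper so that Godement's theorem produces a smooth Hausdorff quotient, and conversely recover closedness of $K$ from smoothness of $\mathcal{G}/\mathcal{K}$ by intersecting the closed orbit relation with the unit section of $\mathcal{G}\rightrightarrows G^*$ --- your set $(\sigma\times\sigma)^{-1}(R)$ is exactly the paper's set $S=\{(1,x,x,1):x\in K\}$ in disguise, since $\sigma(K)=S$. Your converse direction is correct as written.

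There is, however, a genuine gap in your properness bookkeeping, at the step ``then $u'=\overline{g'}\,x\,\overline{g}^{\,-1}$ lies in a fixed compact set.'' This formula bounds $u'$ in a compact subset of the double $D$, but what is needed is a compact subset of $G^*$ in its \emph{intrinsic} topology: $G^*$ is only the image in $D$ of the $1$-connected integration of $\mathfrak{g}^*$, i.e.\ an immersed and possibly non-closed subgroup of $D$ (think of a dense winding of a torus); nothing in the hypotheses makes it closed or embedded in $D$ --- only $K$ is assumed closed, and only in $G^*$. Since the manifold structure on $\mathcal{K}\subset G\times K\times G^*\times G$ is built from the intrinsic topology of $G^*$, a $D$-compact bound on $u'$ does not make the set of admissible $k$ relatively compact in $\mathcal{K}$. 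This is precisely the immersed-versus-embedded trap you flag in your final paragraph; you defuse it for the $x$-coordinate but fall into it for $u'$. The repair is easy and symmetric to your use of $p$: the other projection $q:\mathcal{G}\to G^*$, $(g,u,v,h)\mapsto v$, is the second structure map of $\mathcal{G}\rightrightarrows G^*$ and satisfies $q(k\cdot\gamma)=u'\,q(\gamma)$, so $u'\in q(C)\,q(C)^{-1}$, a compact subset of $G^*$ itself; one should also record that $\mathcal{K}$ is closed (it is the preimage of the diagonal of $D$) and embedded in $G\times K\times G^*\times G$, so that trapping all four coordinates in compacta indeed traps $k$ in a compact subset of $\mathcal{K}$. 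The paper sidesteps coordinates entirely: writing $k=(k\cdot\gamma)\gamma^{-1}$, it proves properness of the full left-translation groupoid $\mathcal{G}\times_G\mathcal{G}\rightrightarrows\mathcal{G}$ by the division-map argument and then restricts to the closed subgroupoid $\mathcal{K}\times_G\mathcal{G}$; that route makes the immersed-subgroup issue appear only once, in the implication ``$K$ closed in $G^*$ $\Rightarrow$ $\mathcal{K}$ closed in $\mathcal{G}$.''
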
 
\begin{proof} Suppose that $K$ is closed in $\overline{G^*}$. Then $\mathcal{K} \subset \mathcal{G} $ is a closed submanifold. The action groupoid $\mathcal{X}: =\mathcal{K} \times_G \mathcal{G} \rightrightarrows \mathcal{G} $ which defines the orbit space $\mathcal{G} /\mathcal{K} $ is a proper Hausdorff Lie groupoid with trivial isotropy groups. In fact, the full action groupoid $\mathcal{S}:= \mathcal{G} \times_G \mathcal{G} \rightrightarrows \mathcal{G} $ is proper. Take a product of compact subsets $X \times Y \subset \mathcal{G} \times \mathcal{G} $, then $Z=(X \times \mathtt{i}_\mathcal{G}  (Y))\cap \mathcal{G} \times_G \mathcal{G}$ is also compact and so is $Z'=(\mathtt{t}_\mathcal{S},\mathtt{s}_\mathcal{S} )(Z)$. But $Z'$ is homeomorphic to $(\mathtt{t}_\mathcal{S},\mathtt{s}_\mathcal{S} )^{-1}(X \times Y)$: if $(a,b)\in (\mathtt{t}_\mathcal{S},\mathtt{s}_\mathcal{S} )^{-1}(X \times Y)$, then $(\mathtt{m}_{\mathcal{G} }( a,b),\mathtt{i}_{\mathcal{G} }( b))\in Z$ and hence $(a,\mathtt{i}_{\mathcal{G} }( b))\in Z'$. Since $\mathcal{K} \subset \mathcal{G} $ is closed, $\mathcal{X} \rightrightarrows \mathcal{G} $ is also proper. As a consequence, the equivalence relation $R=(\mathtt{t}_\mathcal{X},\mathtt{s}_\mathcal{X} )(\mathcal{K} \times_G \mathcal{G} )\subset \mathcal{G} \times \mathcal{G} $ which defines the quotient ${\mathcal{G}} /\mathcal{K}$ is the image of an injective proper immersion and so it is a closed embedded submanifold such that $\text{pr}_2: \mathcal{G} \times \mathcal{G}\rightarrow \mathcal{G} $ restricted to $R$ is a submersion. Therefore, Godement's Theorem implies that ${\mathcal{G} }/\mathcal{K}$ is a Hausdorff manifold. 

Conversely, suppose that $P={\mathcal{G}} /\mathcal{K}$ is smooth. So the equivalence classes of $R$ are closed. Since $S=\{(1,x,x,1)\in \mathcal{G} :\forall x \in K\}$ lies inside an equivalence class of $R$ it is also closed. Therefore, $K$ is closed in ${G^*}$ as well. \end{proof} 
		We know that $G^*/K$ is integrable provided it is smooth, see \cite{intpoihom} or \S \ref{subsec:dirhomspa}. We observe here that also $\mathcal{G}/\mathcal{K} $ is integrable.

\begin{coro}\label{symgrohampoihomspa} The Poisson manifolds $\mathcal{G} /\mathcal{K}$ of Proposition \ref{bijpoihomspa} are integrable. \end{coro}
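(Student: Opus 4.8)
The plan is to deduce the result directly from Proposition \ref{poihom2}, so that the task reduces to exhibiting the Lie groupoid morphism required by its hypothesis. Recall that $\mathcal{G}\rightrightarrows G$ from \eqref{sympoigr} is a symplectic groupoid integrating the Poisson structure on $G$, and that under the right-invariant trivialization $\mathbb{T}G\cong\mathfrak{d}\times G$ the Lagrangian subalgebra $\mathfrak{l}\subset\mathfrak{d}$ determines the Dirac structure $L=\mathfrak{l}\times G$. By the classification of \cite{liuweixu2} the homogeneous space $P=\mathcal{G}/\mathcal{K}$ is precisely the Poisson homogeneous space associated to $L$, and the isotropy subgroupoid $\mathcal{K}$ of \eqref{isogro} integrates $L\cap T^*G$, which corresponds to the action Lie algebroid $\mathfrak{k}\times G$ with $\mathfrak{k}=\mathfrak{g}^*\cap\mathfrak{l}$. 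Thus the data of Proposition \ref{poihom2} are in place, with $M=G$ and $\text{Lie}(\mathcal{K})=L\cap T^*G$; the smoothness of $P$ is the running hypothesis, being guaranteed by Proposition \ref{bijpoihomspa}.

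It then remains to integrate the inclusion of Lie algebroids $L\cap T^*G\hookrightarrow L$ by a Lie groupoid morphism with source $\mathcal{K}$. Here I would use the groupoid $\mathcal{L}$ of \eqref{laggro}, which integrates $\mathfrak{l}\times G=L$, together with the observation already recorded above that $\mathcal{K}$ is a Lie subgroupoid of $\mathcal{L}$ over the common base $G$. The inclusion $\mathcal{K}\hookrightarrow\mathcal{L}$ is then the desired morphism: applying the Lie functor to a subgroupoid inclusion yields the corresponding inclusion of tangent Lie algebroids, namely $\mathfrak{k}\times G\hookrightarrow\mathfrak{l}\times G$, which is exactly $L\cap T^*G\hookrightarrow L$. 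With this morphism in hand, Proposition \ref{poihom2} immediately gives that $P=\mathcal{G}/\mathcal{K}$ is integrable.

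The conceptual content of the argument is therefore light once the dictionary between the explicit groupoids \eqref{sympoigr}, \eqref{laggro}, \eqref{isogro} and the Dirac-geometric objects of Proposition \ref{poihom2} is set up. The step I expect to demand the most care is the verification, at the infinitesimal level, that under the trivialization $\mathbb{T}G\cong\mathfrak{d}\times G$ one has $L\cap T^*G=\mathfrak{k}\times G=\text{Lie}(\mathcal{K})$ and $\text{Lie}(\mathcal{L})=\mathfrak{l}\times G=L$, so that $\mathcal{K}\hookrightarrow\mathcal{L}$ genuinely integrates the inclusion demanded by Proposition \ref{poihom2}. These identifications rely on the relation $\llbracket u,v\rrbracket=[u,v]+\mathtt{a}^*\langle du,v\rangle$ between the Lie-algebroid and Courant brackets on $\mathfrak{d}\times G$ recalled above, together with the compatibility of the trivialization with $\pi$; since $\mathfrak{l}$ and $\mathfrak{k}$ are subalgebras the correction term drops out and the two brackets agree, so the checks are mechanical, but this is the one point where the explicit structure of the double must genuinely be used.
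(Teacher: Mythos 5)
Your proposal is correct and follows essentially the same route as the paper: both arguments take the subgroupoid inclusion $\mathcal{K}\hookrightarrow\mathcal{L}$ as the Lie groupoid morphism integrating $\mathfrak{k}\times G\hookrightarrow\mathfrak{l}\times G$ (identified with $L\cap T^*G\hookrightarrow L$ via the trivialization $\mathbb{T}G\cong\mathfrak{d}\times G$) and then invoke Proposition \ref{poihom2}. The paper only adds, beyond this, an explicit description of the resulting integration as the orbit space $\mathtt{t}^!_{\mathcal{G}}\mathcal{L}/R$, which is not needed for the bare integrability statement.
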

\begin{proof} The inclusion of Lie algebroids $\mathfrak{k} \times G \hookrightarrow \mathfrak{l} \times G $ is integrable by the inclusion $\mathcal{K} \hookrightarrow \mathcal{L} $ so Proposition \ref{poihom2} applied to the symplectic groupoid $\mathcal{G} $ implies that $P=\mathcal{G} /\mathcal{K} $ is integrable. More concretely, the Poisson structure on $P$ is the reduction of a Dirac structure $E$ of the form \eqref{pulldir} which is isomorphic as a Lie algebroid to the pullback Dirac structure $\mathtt{t}^!_{\mathcal{G} }(\mathfrak{l}\times G)$ on $\mathcal{G} $, where $\mathtt{t}_{\mathcal{G} }:\mathcal{G} \rightarrow G$ is the target map, see Lemma \ref{poihom1}. Then $E\cong \mathtt{t}^!_{\mathcal{G} }(\mathfrak{l}\times G)$ is integrable by the pullback Lie groupoid $\mathtt{t}^!_{\mathcal{G} }\mathcal{L} $ and, by Theorem \ref{liequo4}, the quotient Poisson structure on $P$ is integrable by the orbit space $\mathtt{t}^!_{\mathcal{G} } \mathcal{L}/ R$, where $R$ is the equivalence relation as in \eqref{equrel} associated to the Lie groupoid morphism \eqref{grmp}. \end{proof}
To conclude we shall see a couple of simple families of Poisson homogeneous spaces of Poisson groups, their Hamiltonian counterparts and their integrations according to Corollary \ref{symgrohampoihomspa}.  
		\begin{exa}\label{exa:poihomspa1} Let $G$ be a 1-connected complete Poisson group and let $G^*$ be the 1-connected integration of $\mathfrak{g}^*$. In this situation, the dressing actions define a Lie group structure on $D=G \times G^*$ which integrates $\mathfrak{d} $ and so $D$ is a Drinfeld double for $G$ \cite[Proposition 2.43]{luphd}. As a consequence, $\Sigma(G) \rightrightarrows G$ is an action groupoid $G \times G^* \rightrightarrows G$ which is isomorphic to $\mathcal{G} \rightrightarrows G $ as in \eqref{sympoigr}. Let $\mathfrak{l}\subset \mathfrak{d}$ be a Lagrangian subalgebra which corresponds to a Poisson homogeneous space of $G^*$. Then $\mathcal{K} $ as in \eqref{isogro} is isomorphic to an action groupoid as well $G \times K \rightrightarrows G$, where $K\subset G^*$ is the connected integration of $\mathfrak{k}$. Therefore, we have the following: the Poisson homogeneous space $P=\mathcal{G} /\mathcal{K} $ is diffeomorphic to the quotient $(G \times G^* )/K$, where the $K$-action is defined as $x\cdot (a,u)=(a\cdot x^{-1},xu)$ for all $(a,u)\in G \times G^*$ and all $x\in K$. So $P$ is a fiber bundle over $G^*/K$ with typical fiber $G$: 
\[ G \hookrightarrow P \rightarrow G^*/K. \] 
Similarly, the integration of $P$ that we get by applying the argument in Proposition \ref{poihom2} is a fiber bundle construction. We have that $\mathcal{L} $ as in \eqref{laggro} is an action groupoid $G \times L \rightrightarrows G$. The pullback groupoid $\mathtt{t}^!_{\mathcal{G} } \mathcal{L}$ is isomorphic to the product groupoid of $\mathcal{L} $ and the pair groupoid $G^* \times G^* \rightrightarrows G^*$. Now $K \times K$ acts on $\mathcal{L}$ by means of the action map $(x,y)\cdot (a,l)= (a\cdot x^{-1},xly^{-1})$ for all $(x,y,a,l)\in K \times K \times G \times L$; on the other hand, we have a principal $K \times K$-action on $G^* \times G^*$ defined by $(x,y)\cdot(u,v)=(xu,yv)$ for all $(x,y,u,v)\in K \times K \times G^* \times G^*$. Both actions are multiplicative in the sense that they are groupoid morphisms:
\begin{align*} (K \times K )\times \mathcal{L} &\rightarrow \mathcal{L}, \\
   (K \times K) \times (G^* \times G^*) &\rightarrow G^* \times G^*, \end{align*}
with respect to the pair groupoid structure $K \times K \rightrightarrows K$. More precisely, they are Lie 2-group actions. So the quotient $(G^* \times G^*)/(K \times K)$ is isomorphic to the pair groupoid over $G^*/K$ and the Lie groupoid $\mathcal{P} $ integrating $P$ that the proof of Proposition \ref{poihom2} gives us is a fibration of Lie groupoids:
\[ \mathcal{L} \hookrightarrow \mathcal{P} \rightarrow(G^*/K)\times(G^*/K).\] \end{exa}  

The following simple family of examples shows that there is not an obvious relationship such as (weak) Morita equivalence between the Poisson structure on $P=\mathcal{G} /\mathcal{K} $ and the Poisson structure on $G^*/K$.
\begin{defi}[\cite{dazson}] Let $(G,\pi)$ be a Poisson group. An {\em affine Poisson structure} on $G$ is a Poisson structure $\Pi$ on $G$ such that	the multiplication map $\mathtt{m}: (G,\pi) \times (G,\Pi) \rightarrow (G,\Pi)$ is a Poisson map. \end{defi} 
		Affine Poisson structures are the same as Poisson homogeneous space structures on $G$. In particular, these structures are classified by Lagrangian subalgebras $\mathfrak{l}\subset \mathfrak{d} $ such that $\mathfrak{l} \cap \mathfrak{g} =\{0\}$.

\begin{exa} Take a Lagrangian subalgebra $\mathfrak{l}\subset \mathfrak{d} $ such that $\mathfrak{l} \cap \mathfrak{g}^* =\{0\}$. In this situation, the classification theorem of \cite{liuweixu2} implies that we can take $\mathcal{G} :=\Sigma(G)$ as the underlying manifold of a Poisson homogeneous space associated to $\mathfrak{l}  \times G$ (provided that it is Hausdorff). Since $\mathfrak{l} \times G$ is isomorphic as a Lie algebroid to the cotangent Lie algebroid $\mathfrak{g}^* \times G\cong T^*G$ \cite[Ch. 5]{luphd}, we can integrate $\mathfrak{l} \times G$ with the Lie groupoid $\mathcal{G}\rightrightarrows G $ itself. But then the pullback groupoid $\mathtt{t}^!_{\mathcal{G} } \mathcal{G} $, which integrates $\mathtt{t}_{\mathcal{G} }^!(\mathfrak{l} \times G)\cong E$ as in \eqref{pulldir}, is isomorphic to the pair groupoid $\mathcal{G} \times \mathcal{G} \rightrightarrows \mathcal{G} $. Therefore, $\mathcal{G} \times \mathcal{G} \rightrightarrows \mathcal{G} $ is already a symplectic groupoid which means that $E$ is the graph of a symplectic structure. So we see that under the correspondence of Proposition \ref{bijpoihomspa} an affine Poisson group of $G^*$ corresponds to what might be called an ``affine symplectic structure'' on $\mathcal{G} $, since it is determined by a symplectic structure on $\mathcal{G} $ such that the action by right multiplication of $\mathcal{G} $ on itself is a symplectic action. See \cite{burdru} for the definition of affine tensors on Lie groupoids. \end{exa}
	\subsubsection{Integration in the case of a connected Poisson group.} Corollary \ref{symgrohampoihomspa} is based on the assumption that $G$ admits a Drinfeld double. In the next paragraph we shall adapt this integration result to connected Poisson groups at the expense of requiring an additional hypothesis on the source-simply-connected integration of the Poisson structure. Suppose that $G$ is a connected Poisson group. Then there is a surjective Poisson group morphism $q:\widetilde{G}\rightarrow G$ where $\widetilde{G}$ is a 1-connected Poisson group with the same Lie bialgebra. Since the Poisson tensor of $G$ vanishes on 1, the Poisson tensor of $\widetilde{G}$ vanishes on $Z:=\ker q$. As we have said, $\widetilde{G}$ admits a Drinfeld double so we can define the corresponding Lie groupoids $\mathcal{G} $, $\mathcal{K} $, $\mathcal{L} $.  
		\begin{lem} We have that $Z$ acts on ${\mathcal{G}}$ and on $\mathcal{L} $ by automorphisms and the map $\Phi:\mathcal{K} \rightarrow \mathcal{L} $ given by $\Phi(a,x,u,b)=(a,i(x),u,b)$ is $Z$-equivariant, where $i:K \rightarrow L$ is the inclusion. \end{lem}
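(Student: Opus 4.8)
The plan is to reduce all three assertions to a single group-theoretic fact inside the Drinfeld double $D$: for every $z\in Z$ the image $\overline z\in D$ commutes with both $\overline{\widetilde G}$ and $\overline{G^*}$. Granting this, I would define the $Z$-action on each of $\mathcal G$, $\mathcal L$ and $\mathcal K$ by the \emph{same} formula, translating only the two $\widetilde G$-entries:
\[ z\cdot(g,s,t,h)=(zg,\,s,\,t,\,zh), \]
where $(g,s,t,h)$ ranges over $\mathcal G$ in \eqref{sympoigr}, over $\mathcal L$ in \eqref{laggro}, or over $\mathcal K$ in \eqref{isogro}; in each case the fourth entry $t$ lies in $G^*$. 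The defining relation $\overline g\,s=t\,\overline h$ is preserved, since $\overline{zg}\,s=\overline z\,\overline g\,s=\overline z\,t\,\overline h=t\,\overline z\,\overline h=t\,\overline{zh}$, which uses \emph{only} that $\overline z$ commutes with $\overline g\in\overline{\widetilde G}$ and with $t\in\overline{G^*}$. Thus each action is well defined, and it is by Lie groupoid automorphisms: the formula manifestly intertwines the source and target projections (covering the translation $g\mapsto zg$ of the base $\widetilde G=S/H$), sends the unit $(g,e,e,g)$ to the unit $(zg,e,e,zg)$, and is compatible with the multiplication $\mathtt m((a,s,t,b),(b,s',t',c))=(a,ss',tt',c)$ of \eqref{comp1}; moreover $\alpha_{z_1z_2}=\alpha_{z_1}\alpha_{z_2}$ and $\alpha_z$ is a diffeomorphism with inverse $\alpha_{z^{-1}}$. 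Equivariance of $\Phi$ is then immediate, because $\Phi$ alters only the second entry through the inclusion $i\colon K\hookrightarrow L$, which the translation action leaves untouched: $\Phi(z\cdot(a,x,u,b))=(za,i(x),u,zb)=z\cdot\Phi(a,x,u,b)$.

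The heart of the argument is therefore the central fact. Commutation of $\overline z$ with $\overline{\widetilde G}$ is automatic, as $Z=\ker q$ is central in $\widetilde G$ (the kernel of the covering $\widetilde G\to G$). For commutation with $\overline{G^*}$ I would use the hypothesis $\pi(z)=0$ together with the symplectic groupoid $\mathcal G$ of \eqref{sympoigr}, whose Lie algebroid is the cotangent Lie algebroid $T^*\widetilde G$ of $(\widetilde G,\pi)$ (\S\ref{subsec:poi}). The isotropy Lie algebra of any Lie groupoid at a point is the kernel of its anchor, so at $z$ it equals $\ker\pi^\sharp_z=T^*_z\widetilde G\cong\mathfrak g^*$ in full, because $\pi(z)=0$. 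On the other hand, reading off the isotropy of the model \eqref{sympoigr} at $z$, namely $\{(z,u,v,z):\overline z u=v\overline z\}$, and differentiating at the unit, its Lie algebra is $\{\xi\in\mathfrak g^*:\Ad_{\overline z}\xi\in\mathfrak g^*\}\subseteq\mathfrak g^*$. Comparing dimensions forces $\Ad_{\overline z}(\mathfrak g^*)=\mathfrak g^*$.

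It then remains to upgrade this invariance to triviality. Since $z$ is central in $\widetilde G$ we have $\Ad_{\overline z}|_{\mathfrak g}=\mathrm{id}_{\mathfrak g}$, and $\Ad_{\overline z}$ is an isometry of the canonical invariant pairing $\langle\,,\,\rangle$ on $\mathfrak d=\mathfrak g\oplus\mathfrak g^*$, for which $\mathfrak g$ and $\mathfrak g^*$ are Lagrangian. Hence for $\xi\in\mathfrak g^*$ and $X\in\mathfrak g$ one has $\langle\Ad_{\overline z}\xi-\xi,X\rangle=\langle\xi,\Ad_{\overline z^{-1}}X\rangle-\langle\xi,X\rangle=0$, using $\Ad_{\overline z^{-1}}X=X$. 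As $\Ad_{\overline z}\xi-\xi\in\mathfrak g^*$ and $\langle\,,\,\rangle$ restricts to a nondegenerate pairing between $\mathfrak g^*$ and $\mathfrak g$, this gives $\Ad_{\overline z}\xi=\xi$, so $\Ad_{\overline z}=\mathrm{id}_{\mathfrak d}$. Consequently $\overline z$ centralizes $\exp_D(\mathfrak g^*)$ and hence the connected subgroup $G^*$, which is exactly the commutation with $\overline{G^*}$ required above.

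I expect the only genuine obstacle to be the passage from the Poisson-geometric hypothesis $\pi(z)=0$ to the normalization $\Ad_{\overline z}(\mathfrak g^*)=\mathfrak g^*$; the rest is formal bookkeeping. The subtle point there is to justify the isotropy computation for the specific model \eqref{sympoigr}, which need not be source-simply-connected — but this is harmless, since the isotropy \emph{Lie algebra} depends only on the Lie algebroid $T^*\widetilde G$ and its anchor $\pi^\sharp$, not on the global (e.g.\ covering) structure of the integrating groupoid. One should also record that $G^*$ is connected, being the image of the $1$-connected integration of $\mathfrak g^*$, so that commutation at the Lie algebra level indeed propagates to all of $G^*$.
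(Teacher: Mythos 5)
Your proof is correct, but it reaches the key fact by a genuinely different route than the paper, and it actually proves more. The paper's proof establishes only the \emph{setwise} invariance $\Ad_{\overline z}(\mathfrak g^*)=\mathfrak g^*$, and it does so by quoting Lu's explicit formula for the multiplicative bivector, $(r_{a^{-1}}\pi_a)(\alpha,\beta)=-\langle \mathrm{pr}_1 \Ad_{\overline a^{-1}}\alpha,\mathrm{pr}_2\Ad_{\overline a^{-1}}\beta\rangle$ together with $\mathrm{pr}_2\Ad_{\overline a^{-1}}\beta=\Ad^*_a\beta$ and the vanishing $\pi|_Z=0$; it then keeps the resulting twist in the action, $z\cdot(a,u,v,b)=(za,u,{}^zv,zb)$ with ${}^zv=\overline z v\overline z^{-1}$, which is all that is needed for well-definedness and for the equivariance of $\Phi$. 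You instead obtain the invariance from soft Lie theory: the isotropy Lie algebra of any integration of $T^*\widetilde G$ at $z$ is $\ker\pi^\sharp_z=\mathfrak g^*$, and reading the isotropy off the model \eqref{sympoigr} forces $\Ad_{\overline z}(\mathfrak g^*)\subseteq\mathfrak g^*$; then, using centrality of $Z$ in $\widetilde G$ and $\Ad$-invariance of the pairing on $\mathfrak d$, you upgrade this to $\Ad_{\overline z}=\mathrm{id}_{\mathfrak d}$, so $\overline z$ centralizes $G^*$ and the paper's twist is trivial, ${}^zv=v$ — your untwisted formulas define the \emph{same} action as the paper's, in simplified form. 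What each approach buys: the paper's is shorter and self-contained given Lu's formula; yours avoids that formula, uses only that \eqref{sympoigr} is a Lie groupoid with Lie algebroid $T^*\widetilde G$ (which the paper asserts anyway, citing Lu--Weinstein), and yields the sharper structural conclusion that $Z$ is central relative to $G^*$ in $D$. Two small points you should tighten: since $G^*$ is only an \emph{immersed} subgroup of $D$, the identification of the Lie algebra of the model isotropy $\{u\in G^*:\overline z u\overline z^{-1}\in G^*\}$ with $\{\xi\in\mathfrak g^*:\Ad_{\overline z}\xi\in\mathfrak g^*\}$ deserves a one-parameter-subgroup (or intersection-of-immersed-subgroups) justification; and the isometry property of $\Ad_{\overline z}$ should be attributed to $\overline z$ lying in the connected subgroup $\overline{\widetilde G}$, whose adjoint action is generated by the skew operators $\ad_X$, $X\in\mathfrak g$. (Also, the base of $\mathcal G$ is $\widetilde G$; the notation ``$S/H$'' belongs to an earlier section.)
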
 
\begin{proof} The Poisson tensor $\pi$ on $\widetilde{G}$ can be expressed in terms of the inclusions into $D$ and the projections $\text{pr}_1: \mathfrak{d} \rightarrow \mathfrak{g} $, $\text{pr}_2: \mathfrak{d} \rightarrow \mathfrak{g}^*$ as follows, see \cite[Proposition 2.31]{luphd}:
		\[ (r_{a^{-1}}\pi_a)(\alpha, \beta )=-\langle \text{pr}_1 \Ad_{\overline{a}^{-1}} \alpha , \text{pr}_2 \Ad_{\overline{a}^{-1}} \beta \rangle \] 
for all $\alpha, \beta \in \mathfrak{g}^*$ and all $a\in G$. Using the explicit description of the Lie bracket on $\mathfrak{d}$ we get that  $\text{pr}_2 \Ad_{\overline{a}^{-1}} \beta=\Ad^*_a \beta $, where $\Ad^*$ is the coadjoint action of $\widetilde{G}$ on $\mathfrak{g}^*$. So we have that 
		\[ \langle \text{pr}_1 \Ad_{\overline{a}^{-1}} \alpha ,  \Ad^*_{{a}} \beta \rangle= 0 \]
for all $\alpha, \beta \in \mathfrak{g}^*$ and all $a\in Z$. As a consequence, for all $\alpha  $ we have that $\text{pr}_1 \Ad_{\overline{a}^{-1}} \alpha=0$ and hence the adjoint action of $Z$ on $\mathfrak{d}$ fixes $\mathfrak{g}^*$. Therefore, for all $z\in Z$ and all $v\in {G^*}$ there exists $w\in {G^*}$ such that $\overline{z}{v}= {w}\overline{z} $, in such a situation we denote $w={}^zv$. Let us define an action of $Z$ on $\mathcal{G}$ as follows:
		\[ z\cdot (a,u,v,b)=(za,u,{}^zv,zb). \]

This is a free and proper action by automorphisms whose orbit space is a Lie groupoid which integrates the Poisson structure on $G$.

We can also define an action by automorphisms of $Z$ on $\mathcal{L} $ as in \eqref{laggro} by means of the same formula: $z\cdot (a,x,u,b)=(za,x,{}^zu,zb)$ for all $z\in Z$ and $(a,x,u,b)\in \mathcal{L} $. So the map $\Phi$ as before is $Z$-equivariant. \end{proof}  
\begin{thm}\label{poihomspasymgro} Let $G$ be a connected Poisson group and let $P$ be a Poisson homogeneous space of the form $\Sigma(G)/\mathcal{K}'$, where $\mathcal{K}'\rightrightarrows G $ is source-connected. Suppose that the Dirac structure associated to $P$ is of the form $\mathfrak{l} \times G$ under the left trivialization $\mathbb{T}G\cong \mathfrak{g} \times G $, where $\mathfrak{l} $ is a Lagrangian subalgebra of the double $ \mathfrak{d}  $. Then $P$ is integrable. \end{thm}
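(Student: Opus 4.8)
The plan is to deduce the statement from the $1$-connected case (Corollary~\ref{symgrohampoihomspa}) by descending the explicit integration along the $Z$-action supplied by the preceding lemma, and then invoking Proposition~\ref{poihom2}.

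First I would pass to the $1$-connected cover $q:\widetilde{G}\to G$ with $Z=\ker q$, so that $\widetilde{G}$ carries the same Lie bialgebra, is $1$-connected, and admits a Drinfeld double $D$. This lets me form the symplectic groupoid $\mathcal{G}$ of \eqref{sympoigr} integrating $\widetilde{G}$, together with the groupoids $\mathcal{K}$ and $\mathcal{L}$ of \eqref{isogro} and \eqref{laggro} attached to the Lagrangian $\mathfrak{l}$ and the subalgebra $\mathfrak{l}\cap\mathfrak{g}^*$, as well as the inclusion-type morphism $\Phi:\mathcal{K}\to\mathcal{L}$. By the preceding lemma, $Z$ acts freely and properly by groupoid automorphisms on $\mathcal{G}$ and on $\mathcal{L}$, hence on the subgroupoid $\mathcal{K}\subset\mathcal{G}$, the morphism $\Phi$ is $Z$-equivariant, and the orbit space $\mathcal{G}/Z$ is a symplectic groupoid integrating the Poisson structure of $G$.

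Next I would descend the whole picture to $G$. The key observation is that, since $Z$ is discrete and acts freely and properly while permuting the source fibers lying over a single $Z$-orbit of the base, the $\mathtt{s}$-fibers of $\mathcal{G}/Z$ are diffeomorphic to those of $\mathcal{G}$; the same holds for $\mathcal{K}/Z$ and $\mathcal{L}/Z$. Using this together with the source-connectedness hypothesis on $\mathcal{K}'$ I would identify $\mathcal{G}/Z\cong\Sigma(G)$, $\mathcal{K}/Z\cong\mathcal{K}'$ and $\mathcal{L}/Z\cong\mathcal{L}'$ as integrations of the inclusion $L\cap T^*G\hookrightarrow L$ over $G$, where $L=\mathfrak{l}\times G$. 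Passing $\Phi$ to the $Z$-quotient then produces a Lie groupoid morphism $\psi:=\Phi/Z:\mathcal{K}'\to\mathcal{L}'$ whose linearization $\text{Lie}(\psi)$ is the inclusion $L\cap T^*G\hookrightarrow L$ and whose source is $\mathcal{K}'$.

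Finally I would apply Proposition~\ref{poihom2} to the symplectic groupoid $\Sigma(G)\rightrightarrows G$, the Dirac structure $L=\mathfrak{l}\times G$ and the morphism $\psi$; this yields that $P=\Sigma(G)/\mathcal{K}'$ is integrable, with an explicit integration given by the quotient $\mathtt{t}^!_{\Sigma(G)}\mathcal{L}'/R$ of Theorem~\ref{liequo4}. The step I expect to be most delicate is the identification in the previous paragraph: verifying that the $Z$-quotients of $\mathcal{G}$, $\mathcal{K}$ and $\mathcal{L}$ are precisely the source-simply-connected groupoid $\Sigma(G)$ and the groupoids $\mathcal{K}'$, $\mathcal{L}'$ named in the hypotheses, and that $\text{Lie}(\psi)$ is the correct inclusion after descent. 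Once this bookkeeping is in place, the integrability follows formally from the already established Proposition~\ref{poihom2} and the $Z$-equivariance of $\Phi$.
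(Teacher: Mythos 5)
Your overall architecture (pass to the 1-connected cover $\widetilde{G}$, invoke the $Z$-equivariance lemma, conclude via Proposition \ref{poihom2}) matches the paper's, but your middle step contains a genuine gap: you identify $\mathcal{G}/Z\cong\Sigma(G)$ and $\mathcal{K}/Z\cong\mathcal{K}'$, and then descend $\Phi$ directly. The observation that the $\mathtt{s}$-fibers of $\mathcal{G}/Z$ are diffeomorphic to those of $\mathcal{G}$ (because $Z$ acts freely on the base $\widetilde{G}$) is correct, but it does not yield $\mathcal{G}/Z\cong\Sigma(G)$: for that you would also need $\mathcal{G}$ itself to be source-simply-connected, and nothing in the setup guarantees this. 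The groupoid \eqref{sympoigr} is built from $G^*$, which in this paper is only the \emph{image} in the Drinfeld double $D$ of the 1-connected integration of $\mathfrak{g}^*$; this image can have nontrivial fundamental group (the integrating morphism $\widetilde{G^*}\to D$ may have discrete kernel), and $\mathcal{G}$ can even fail to be source-connected when $\overline{\widetilde{G}}\cap G^*\neq\{e\}$ in $D$. Indeed, the paper only identifies $\mathcal{G}$ with a source-simply-connected integration under extra hypotheses (Example \ref{exa:poihomspa1}: complete Poisson group, $D=G\times G^*$ with $G^*$ the 1-connected integration). Consequently $\mathcal{K}/Z$ and $\mathcal{K}'$ a priori sit in different ambient groupoids and cannot simply be declared isomorphic, so your $\psi=\Phi/Z$ does not have the source $\mathcal{K}'$ that Proposition \ref{poihom2} requires. (A minor additional point: your $\mathcal{L}'$ clashes with the paper's notation, where $\mathcal{L}'$ integrates $\mathfrak{l}\times G^*$ over $G^*$, not a structure over $G$; within your argument it is only a name for $\mathcal{L}/Z$, so this is cosmetic.)

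The paper's proof is structured precisely to avoid the identification you assume: instead of matching $\Sigma(G)$ with $\mathcal{G}/Z$, it uses Lie's second theorem to produce a one-directional comparison morphism $\Psi:\Sigma(G)\to\mathcal{G}/Z$ covering the identity of $T^*G\cong\mathfrak{g}^*\times G$ (this exists because $\Sigma(G)$ is source-simply-connected, no matter what $\mathcal{G}/Z$ is), then uses the source-connectedness of $\mathcal{K}'$ together with $\text{Lie}(\mathcal{K}')=\text{Lie}(\mathcal{K}/Z)\cong(\mathfrak{l}\cap\mathfrak{g}^*)\times G$ to conclude $\Psi(\mathcal{K}')\subset\mathcal{K}/Z$, and finally feeds the composite $\overline{\Phi}\circ\Psi|_{\mathcal{K}'}:\mathcal{K}'\to\mathcal{L}/Z$ into Proposition \ref{poihom2}. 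If you replace your unjustified identifications by this comparison morphism, the rest of your argument (the $Z$-equivariance of $\Phi$, the appeal to Proposition \ref{poihom2} and Theorem \ref{liequo4}) goes through as written.
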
 
	\begin{proof} Let $P$ be a Poisson homogeneous space of the form $\Sigma(G)/\mathcal{K}'$ whose associated Dirac structure is isomorphic to $\mathfrak{l} \times G \hookrightarrow \mathfrak{d} \times G$ via left trivialization. By Lie's second theorem there is a Lie groupoid morphism $\Psi:\Sigma(G) \rightarrow \mathcal{G} /Z$ which covers the identity on $T^*G\cong \mathfrak{g}^* \times G$. Since $\mathcal{K}'$ is source-connected and $\text{Lie} (\mathcal{K}')= \text{Lie} (\mathcal{K} /Z)\cong(\mathfrak{l} \cap \mathfrak{g}^*) \times G$, we have that $\Psi(\mathcal{K}')\subset \mathcal{K}/Z$. So we can compose $\Psi|_{\mathcal{K}'} $ with the morphism $\overline{\Phi} : \mathcal{K}/Z \rightarrow \mathcal{L}/Z$ induced by $\Phi$ as in the previous lemma and satisfy the condition of Proposition \ref{poihom2}. As a consequence, $P$ is integrable. \end{proof}

\subsection*{Acknowledgments.} The author thanks CNPq for the financial support and is very grateful to H. Bursztyn for his generous tutelage. The author also thanks the referee whose comments and suggestions substantially improved this work.
\printbibliography
\end{document}